\documentclass[10pt]{article}

\usepackage{amsmath,amssymb,mathrsfs,amsthm}
\usepackage{amsfonts}
\usepackage[fleqn]{mathtools}
\usepackage{dsfont}
\usepackage{latexsym}

\usepackage[margin=0.83in]{geometry}
\usepackage{setspace}
\usepackage{float}
\usepackage{multicol}
\usepackage{enumitem}
\usepackage{graphicx}
\usepackage[dvipsnames]{xcolor}
\usepackage{ragged2e}
\usepackage[nice]{nicefrac}

\usepackage[english]{babel}
\usepackage{authblk}

\usepackage[sort&compress,numbers]{natbib}

\usepackage[title]{appendix}

\usepackage{setspace}

\newtheorem{theorem}{Theorem}[section]
\newtheorem{lemma}{Lemma}[section]

\newtheorem{corollary}{Corollary}[section]

\newtheorem{remark}{Remark}[section]

\newtheorem{rule-def}[theorem]{Rule}
\theoremstyle{definition}

\numberwithin{equation}{section}

\usepackage[dvipsnames]{xcolor}
\emergencystretch=\maxdimen
\usepackage[
    colorlinks=true,
    linkcolor=magenta,    
    citecolor=green,   
    urlcolor=blue,     
    filecolor=blue,
    linktoc=all
]{hyperref}

\let\originaleqref\eqref
\renewcommand{\eqref}[1]{\textcolor{magenta}{\originaleqref{#1}}}
\providecommand{\keywords}[1]
{
  \textbf{\textit{Keywords:}} #1
}

\providecommand{\msc}[1]
{
  \textbf{\textit{MSC codes (2020):}} #1
}

\title{Global classical solutions for a metastatic cancer invasion model with negative haptotaxis}
\author[1]{Nishith Mohan\thanks{mohan@mathematik.uni-kl.de}}
\author[2]{Dimitrios Katsaounis\thanks{katsaounis@igpm.rwth-aachen.de}}
\affil[1]{RPTU Kaiserslautern-Landau, Department of Mathematics, \authorcr Gottlieb-Daimler-Str. 48, 67663 Kaiserslautern, Germany.}
\affil[2]{Institute for Geometry and Applied Mathematics,  RWTH Aachen, \authorcr  Im S\"usterfeld 2, 52072 Aachen, Germany.}
\date{}
\begin{document}
\makeatletter
\renewcommand\@fnsymbol[1]{}
\makeatother
\maketitle
\begin{abstract}
We analyze a coupled PDE-ODE model for metastatic cancer invasion involving epithelial-like and mesenchymal-like cancer cells, cancer-associated fibroblasts (CAFs), the transforming growth factor $\beta$, matrix-degrading metalloproteinases, and extracellular matrix reconstruction by CAFs. The model incorporates bidirectional phenotypic transitions between epithelial-like and mesenchymal-like cancer cells, cancer-cell haptotaxis, negative fibroblast haptotaxis, as well as cell-specific proliferation. Under explicit parameter restrictions, we establish global classical solvability for the full model in two dimensions using an energy-based approach, and for a three-dimensional variant with constant phenotypic transition rates using coupled $L^p$-estimates. Numerical simulations illustrate primary tumor invasion and metastatic spread to secondary sites.
\end{abstract}
\keywords{PDE-ODE coupled system; Extensibility criterion; Energy-type inequality; Gagliardo-Nirenberg inequality; Coupled $L^p$ estimates; Moser-Alikakos iteration.}

\msc{35A01 (primary), 
     35K57, 
     35A09, 
     35B45, 
     35Q92, 
     92C17 (secondary)  
     }

\section{Introduction}
 In this article, we investigate the global classical solvability of a cancer invasion and metastasis model and further explore its behavior through numerical simulations. The model is a modified version of the one proposed in \cite[(43)]{katsaounis2024genuinely}, where a system of advection-reaction-diffusion PDEs describes cancer growth and tumor progression through cellular migration. The model reads as follows:
\begin{equation}
\label{model1}
\begin{cases}
\partial_t c^{E}
 = \Delta c^{E} -\chi_{E}\nabla\cdot\!\big(c^{E}\nabla v\big)
-\mu_{E}\,\zeta_1(b)\,c^{E} + \mu_{M}\,\zeta_2(b)\,c^{M} \\[7pt]
\hspace{4.5cm}  +\rho_{E}\,c^{E}\big(1-c^{E}-c^{M}-c^{F}-v\big),
& x\in\Omega,\ t>0,
\\[7pt]
\partial_t c^{M}
 = \Delta c^{M}
-\chi_{M}\nabla\cdot\!\big(c^{M}\nabla v\big)
+\mu_{E}\,\zeta_1(b)\,c^{E} - \mu_{M}\,\zeta_2(b)\,c^{M},
& x\in\Omega,\ t>0,
\\[7pt]
\partial_t c^{F}
 = \Delta c^{F}
+\chi_{F}\nabla\cdot\!\big(c^{F}\nabla v\big)+\rho_{F}\,(1+c^{E})\,c^{F}\big(1-c^{E}-c^{M}-c^{F}-v\big)
-\lambda_{F}\,c^{F},
&x\in\Omega,\ t>0,
\\[7pt]
\partial_t b
= \Delta b+\rho_{b}\,c^{F}-\lambda_{b}\,b,
& x\in\Omega,\ t>0,
\\[7pt]
\partial_t m
= \Delta m+\rho_{m}^{F}\,c^{F}+\rho_{m}^{E}\,c^{E}-\lambda_{m}\,m,
& x\in\Omega,\ t>0,
\\[7pt]
\partial_t v
= \rho_{v}\,c^{F}\big(1 - v\big) - \delta_1 mv - \delta_2 c^E v - \delta_3 c^M v,
& x\in\Omega,\ t>0,  
\end{cases}
\end{equation}
where $\Omega \subset \mathbb{R}^N$, $N \in \{2,3\}$, is a bounded domain with sufficiently smooth boundary $\partial\Omega$, and $\nu$ denotes the outward unit normal vector on $\partial\Omega$. The system is supplemented with the zero-flux boundary conditions
\begin{align}
    \label{model2}
     \partial_\nu c^E - \chi_E c^E\partial_\nu v = \partial_\nu c^M - \chi_M c^M\partial_\nu v = \partial_\nu c^F & + \chi_F c^F\partial_\nu v = \partial_\nu b = \partial_\nu m = 0,  ~~ x\in \partial \Omega,\ t>0,
\end{align}
together with the initial conditions
\begin{align}
\label{model3}
\left(c^E, c^M, c^F, b, m, v\right)(x,0)
=
\left(c_0^E, c_0^M, c_0^F, b_0, m_0, v_0\right),
\qquad x\in\Omega.
\end{align}
We denote with $c^E: \Omega \times \mathbb{R}^+ \to \mathbb{R}^+$ the density of Epithelial-like Cancer Cells (ECCs), with $c^M:\Omega\times\mathbb{R^+} \to \mathbb{R}^+$ the density of Mesenchymal-like Cancer Cells (MCCs), with $c^F: \Omega \times \mathbb{R^+} \to \mathbb{R}^+$ the density of Cancer-Associated Fibroblasts (CAFs), with $b: \Omega \times \mathbb{R^+} \to \mathbb{R}^+$ the density of the Transforming Growth Factor $\beta$ (TGF-$\beta$), with $m: \Omega \times \mathbb{R^+} \to \mathbb{R}^+$ the density of the Matrix-degrading Metalloproteinases (MMPs) and with $v: \Omega \times \mathbb{R^+} \to \mathbb{R}^+$ the density of the Extracellular matrix (ECM).

\subsection*{Model description}
Model \eqref{model1} describes the spatio-temporal progression of tumor growth and the creation of metastatic colonies in distant locations away from the primary tumor. In particular, we consider two phenotypes of cancer cells that play different roles in the so-called invasion-metastasis cascade \cite{HW2000, Gupta2006, Hanahan2022}. The ECCs are primarily of a proliferative nature and are responsible for the creation of large tumors, usually observable in histological studies \cite{Talmadge2010}. However, ECCs are also capable of invading the local tissue, either through diffusive spreading or by performing haptotaxis, i.e., following the gradient of high-density regions of the ECM. The ECM is an amalgamation of, primarily, collagen fibers and other smaller proteins and molecules that create the basis of an organ's connective tissue \cite{Friedl2003}. Cancer cells are able to adhere to these fibers to facilitate their motion and hence move to denser regions of the matrix. The second type of cancer cells that play a crucial role in the creation of metastasis are the MCCs. These are primarily of a migratory nature, where they navigate the extracellular environment as solitary cells or in small clusters, and are capable of arriving in distant locations away from the primary tumor site through haptotaxis \cite{Sabeh2009, Friedl2012}. A key underlying process that facilitates cancer invasion and metastasis is the Epithelial-to-Mesenchymal Transition (EMT), which describes the transition of cancer cells with epithelial-like phenotypic characteristics to a mesenchymal-like type of cell \cite{Kalluri2009}. This process allows for a dynamic tumor landscape where ECCs and MCCs work in parallel, with the former promoting local tumor expansion and the latter facilitating cancer invasion in distant areas of the tissue. Equivalently, the reverse process, called Mesenchymal-to-Epithelial Transition (MET), allows for MCCs that have arrived in new cancer-free locations to revert to their original epithelial character and initiate the growth of metastatic tumor colonies \cite{Tsuji2009}. Let us note here that the EMT and MET processes are generally considered to be continuous, where cells of epithelial-like phenotype progressively attain mesenchymal-like characteristics, and vise versa; hence, during the transition, hybrid epithelial-mesenchymal cells arise, leading to a heterogeneous cancer cell population \cite{Pastushenko2019}. In model \eqref{model1} we have simplified the dynamics and focus only on the extreme cases of the epithelial-mesenchymal spectrum.

This dynamic behavior of the two cancer cell phenotypes is described by the first two equations in model \eqref{model1}, where both ECCs and MCCs spread spatially through linear diffusion and due to advection towards the gradient of the ECM. The phenotypic transitions between the two cancer phenotypes are described here as reaction terms, where $\mu_E>0$ is the rate of the EMT process, and $\mu_M>0$ is the rate of the MET process. The phenotypic transitions come through the functions $\zeta_1(s)$ and $\zeta_2(s)$, which depend on the density of TGF-$\beta$, which has been identified as a triggering factor for this process \cite{Oft1998}, and are assumed to satisfy the following:
\begin{equation}
\label{zeta}
\zeta_i(s):\mathbb{R}^+\to\mathbb{R}^+ \text{ is } C^1,\quad \text{and} \quad 0<\zeta_i(s)\le \zeta^\ast \quad \text{for all } s\ge 0,\ \text{for some } \zeta^\ast>0, \; i \in \{1, 2\}.
\end{equation}
These are ``switch'' functions, claiming that in areas of high TGF-$\beta$ concentration, the EMT process will be triggered (through $\zeta_1$) and conversely, in low TGF-$\beta$ density regions the MET process will take place (through $\zeta_2$). A typical choice of these EMT-MET switch functions is given by 
\begin{align}
\label{zeta_Sim}
\zeta_1(s) = \frac{L}{1 + e^{-k(s - b_T)}} \qquad \text{and} \qquad \zeta_2(s) = \frac{L}{1 + e^{k(s - b_T)}}
\end{align}
for some $L > 0$, where $b_T>0$ is the critical TGF-$\beta$ density threshold that will dictate whether EMT or MET will take place in a certain region of the domain. Finally, we assume a logistic-type growth term for the ECCs to account for their proliferative nature, where the cells compete for space with the other cells or the ECM. Note that in contrast to the ECCs, there is no proliferation term included for the MCCs since they are, primarily, migratory cells \cite{Friedl2003, Friedl2012}.

However, cancer cells are highly dependent on external and extracellular factors of the tumor microenvironment for the successful growth and establishment of metastatic tumors. One important player in tumor progression is the CAF cells that are part of the tumor microenvironment \cite{Xing2010CAFs}. CAFs arise from regular fibroblast cells that are recruited by the cancer cells to facilitate tumor progression. In particular, they have the ability to remodel and reconstruct the ECM, thus creating a prosperous environment for cancer cells to use and invade the tissue. Hence, CAFs tend to perform negative haptotaxis, meaning that they move towards regions of low ECM density to repair the matrix \cite{Darby2014CAFs}. The third equation of \eqref{model1} describes the spatiotemporal movement of CAFs through negative haptotaxis and linear diffusion, where we also assume a logistic-type growth that is enhanced in the regions where more ECCs are present, and a constant death rate. These assumptions align with the fact that we observe a larger population of CAFs close to their recruiter cells, the ECCs. Furthermore, CAFs play a crucial role in the EMT process, since they are one of the primary producers of the growth factor TGF-$\beta$ \cite{Yu2014CAFs}. As already mentioned, TGF-$\beta$ has been identified as a triggering factor for the EMT and MET processes \cite{Xu2009}. The fourth equation of \eqref{model1} shows the evolution of the TGF-$\beta$ density, where we assume linear diffusion and constant growth due to CAFs, as well as a constant decay rate. Furthermore, CAFs are one of the main producers of the matrix-degrading enzymes, MMPs, that facilitate, as their name suggests, the remodeling and degradation of the ECM, hence facilitating the movement of the cancer cells through the collagen fibers of the matrix \cite{Erdogan2017CAFs}. There are many different types of MMPs, and ECCs are also able to produce certain types of MMPs that enable their movement through the ECM \cite{Noel2012}. Taking all that into account, the fifth equation in \eqref{model1} models the evolution of MMPs through linear diffusive spreading, constant growth through the CAFs and ECCs, as well as some natural decay. Finally, the sixth equation tracks the density evolution of the ECM. The collagen matrix is assumed to be static, and logistic-type growth is considered due to the presence of CAFs that compete for space with the ECM. The ECM is degraded due to the soluble MMPs that are secreted by the ECCs and CAFs, as well as by the cancer cells themselves. This is due to the fact that cancer cells produce certain matrix-degrading enzymes that stay on their cellular membranes, thus allowing for the local degradation of the matrix \cite{Sabeh2009, Ito2012}.

Overall, model \eqref{model1} includes several key biological components that play a crucial role in the invasion-metastasis cascade. As aforementioned, this is a simplified version of the hybrid model introduced in \cite{katsaounis2024genuinely}, which is an extension of the works in \cite{sfakianakis2020hybrid, franssen2021anovel3d}, and here we highlight the key differences and simplifications made to facilitate the analytical investigation of the phenomenon, as well as some other relevant works on cancer invasion and metastasis. Firstly, we have replaced the hybrid part of the model, where the spatio-temporal evolution of the MCCs was given by an individual-based model of SDEs, with a density formulation for the MCCs. Instead of tracking each individual MCC separately, we now monitor the progression of the entire MCC density. The original modeling assumption in \cite{katsaounis2024genuinely} was justified due to the observation, in histological studies, of much fewer MCCs appearing in cancerous tissues compared to ECCs (approx. $10^3$ per cm$^3$ of MCCs compared to $10^9$ per cm$^3$ of ECCs), thus making a density description for MCCs less feasible. However, when considering larger-scale tissue and organ dynamics, using density formulations again becomes more realistic for all the biological components, thus making the current modeling description a reasonable approximation of the system dynamics. Another simplification made to the current model is the assumption of the linear diffusive spreading of the cells, in comparison to the porous-medium type diffusion considered in \cite{katsaounis2024genuinely}. Thus, we assume that the ensemble of cells behaves more like ``particles'' spreading in the domain, rather than a diffusive fluid-structure. Yet again, when considering a large population of cancer cells spreading in the tissue, linear diffusion becomes a reasonable approximation and has been seen to match fairly well experimental observations \cite{Sturrock2011}. 
A further simplification relates to the phenotypic transition terms that are now modeled in a deterministic fashion. In the original model, the condition of EMT taking place due to high concentrations of TGF-$\beta$ was necessary but not sufficient, in contrast to model \eqref{model1}, and it additionally relied on a random Poisson process generating the phenotypic switching, in order to account for other unknown factors that play a role in the process. The stochastic terms would have brought additional analytical challenges, so we chose to reduce the system to a deterministic level. Finally, we have made some slight modifications on the density evolution of the ECM, where now the proliferation due to the CAFs is modulated by the density of ECM in a logistic-type of growth, as well as the degradation of the matrix being separated into three different terms: (i) due to the presence of all soluble MMPs, (ii) due to the ECCs and (iii) due to the MCCs. The last two terms relate to the membrane-bound MMPs of the cancer cells, which promote the dissemination of the collagen matrix.

Mathematical modeling of cancer invasion and metastasis has been studied extensively over the last decades, and especially the use of macroscopic models of PDEs has proven to provide great insight into the intricate dynamics of the phenomenon \cite{chaplainp2025preziosi, sfakianakis2020mathematical}. Originally, several modeling attempts have focused on simplified descriptions of the invasion-metastasis cascade, where only one population of cancer cells was considered to invade the ECM \cite{anderson1999mathematical, perumpanani1999travelling, marchant2006biphasic, gerisch2008mathematical} and more focus was given on the intracellular processes that modulated the movement and growth of cancer cells \cite{chaplain2005urokinase, andasari2010mathematical} or studying specific cancer cell lines and their interactions with their respective extracellular environment \cite{perumpanani1998extracellular}. One of the first attempts to separate the dynamics of migratory and proliferative cells was done in \cite{perumpanani1996biological}, where the authors divided the cells into invasive and non-invasive, as well as separately described the different components and enzymes of the connective tissue and ECM. In recent years, the EMT process has been studied more extensively, where the dedifferentiation process has been used for haptotaxis or multiple-taxis models \cite{sfakianakis2017multiscale, kolbe2017numerical, franssen2020mathematical, kolbe2021modeling, crossley2024goorgrow}. These developments have led to increasingly detailed models that account for phenotypic heterogeneity and the interaction of different cell populations with the surrounding extracellular environment, while also introducing additional mathematical challenges associated with the resulting taxis-driven dynamics. The increasing complexity of these models has, in turn, motivated considerable interest in understanding their qualitative and analytical properties.

\subsection*{Analytical considerations}

The analytical theory for haptotaxis systems, typically formulated as coupled PDE-ODE systems, has received considerable attention in the last two decades. An early contribution is \cite{walker2007global}, where Walker and Webb studied global classical solutions for a system involving only haptotactic effects. The Chaplain-Lolas chemo-haptotaxis system \cite{chaplain2005urokinase,chaplain2006mathematical} was analyzed in a series of papers by Tao and Wang \cite{tao2008global} and by Tao \cite{tao2009global}, with a focus on global classical solvability. Tao further established global-existence results for a haptotaxis model incorporating tissue remodeling \cite{tao2011global}. Tao and Winkler later investigated the effects of nonlinear diffusion and logistic growth on the solvability of a chemotaxis-haptotaxis system \cite{tao2011chemotaxis}. Particularly relevant to the present study is the work of Tao and Winkler \cite{tao2014energy}, in which they established global solvability for a chemo-haptotaxis system with ECM remodeling (non-diffusible attractant) via energy-based estimates in a two-dimensional spatial domain. Further contributions include the boundedness result of Marciniak-Czochra and Ptashnyk \cite{marciniak2010boundedness}, as well as a series of works by Pang and Wang addressing global existence, boundedness, and asymptotic behavior in related chemotaxis-haptotaxis models \cite{pang2017global,pang2018global,pang2019asymptotic}. In a related direction, Stinner et al.\ \cite{stinner2014global} established global weak solutions for a PDE-ODE system describing multiscale cancer cell invasion.

The models considered in the analytical studies mentioned above primarily focus on invasion by a single cancer-cell population. Recent modeling efforts have moved beyond this limitation by considering multiple interacting cell populations, particularly differentiated cancer cells and cancer stem cells. In this direction, Hellmann et al.\ \cite{hellmann2016mathematical} and Sfakianakis et al.\ \cite{sfakianakis2016multiscale, sfakianakis2017multiscale} developed a deterministic multiscale framework combining three coupled subsystems describing the interactions between the above-mentioned cancer cell populations and their surrounding environment. Global classical solvability for the resulting model in a two-dimensional setting with large initial data was established by Giesselmann et al.\ \cite{giesselmann2018existence}, while further extensions concerning boundedness, three-dimensional settings, more general transition rates, and weak solvability, under different assumptions on the presence or absence of cellular proliferation, were obtained by Dai and Liu in a series of articles \cite{dai2020global,dai2022new,dai2022global} and by Dai and Ma in \cite{dai2023boundedness}. However, all the haptotaxis models discussed above incorporate \emph{positive haptotaxis}, whereby cancer cells migrate toward regions of higher ECM density. As discussed earlier, the model in \cite[(43)]{katsaounis2024genuinely}, on which the present study is based, additionally describes the migration of CAFs toward regions of lower ECM density, a mechanism referred to hereafter as \emph{negative haptotaxis}. The results on mathematical analysis of taxis systems involving negative taxis mechanisms are comparatively limited. For tumor-invasion systems combining regular haptotaxis with negative haptotaxis, Stinner et al.\ \cite{stinner2016global} established global weak solvability for a go-or-grow multiscale model involving migratory and proliferative tumor-cell populations. 
Subsequently, Kolbe et al.\ \cite{kolbe2021modeling} established global weak solutions for a simplified multiscale model with migratory and proliferative phenotypes, combining nonlinear diffusion, attraction toward tissue, repulsion from proliferating cells, and receptor-mediated phenotype transitions. In a related non-biological context, Fuest and Heydari analyzed a cross-diffusion system driven purely by negative haptotaxis for a model of rivaling gangs \cite{fuest2024cross}. Recent analytical studies have addressed reduced variants of the same underlying model in \cite[(43)]{katsaounis2024genuinely} with CAF-mediated tissue reconstruction. Tao and Winkler \cite{TaoWinkler2} established global classical solvability in arbitrary spatial dimensions under suitable conditions on the logistic degradation by developing a Moser-type iteration. In a further study, they treated a four-component variant containing a cubic zero-order nonlinearity and obtained global classical solutions for arbitrarily large initial data in two-dimensional domains \cite{TaoWinklerPreprint1}. Li \cite{Li2026Haptotaxis} subsequently considered a variant without logistic sources and proved global classical solvability in two dimensions under smallness conditions on the haptotactic sensitivities. The above discussion highlights the importance of studying such systems and motivates a rigorous mathematical investigation of their complex nonlinear interactions and the resulting challenges in establishing global solvability. 

\paragraph{Analytical challenges:} Although, as in \cite{TaoWinklerPreprint1, TaoWinkler2, Li2026Haptotaxis}, we replace porous-medium diffusion with standard Laplacian diffusion, the present analysis retains the full coupling structure of the model proposed in \cite[(43)]{katsaounis2024genuinely} and incorporates several features absent from its earlier formulations. In particular, the bidirectional EMT-MET mechanism acts as a differentiation-dedifferentiation coupling and introduces additional transfer terms beyond those treated in \cite{dai2020global,dai2022new,dai2022global,dai2023boundedness, TaoWinklerPreprint1, TaoWinkler2}. Moreover, ECCs and CAFs have different proliferation mechanisms, while MCCs are assumed to be nonproliferative. However, MCCs still undergo haptotactic migration, and the absence of logistic damping in their equation makes it more difficult to obtain suitable uniform estimates. An additional analytical difficulty arises from the MMP component, whose production is driven by the ECC and CAF populations and which, in turn, degrades the ECM. Since ECM gradients determine the haptotactic fluxes - both standard and negative- this coupling creates a feedback loop among the cell populations, MMPs, and the ECM, further complicating the closure of the a priori estimates.

The results in \cite{TaoWinklerPreprint1} and \cite{Li2026Haptotaxis} are restricted to two-dimensional domains, with the latter considering a system without logistic sources, while \cite{TaoWinkler2} obtains arbitrary-dimensional solvability under logistic degradation of the cell populations. In contrast, the present analysis accommodates a nonproliferative but haptotactically migrating MCC population and establishes global solvability in both two and three spatial dimensions. As in \cite{TaoWinklerPreprint1, TaoWinkler2, Li2026Haptotaxis}, we state our classical global solvability conditions explicitly in terms of the model parameters, providing a transparent description of the solution regime covered by the analysis. 

Our work fills several gaps in the analysis of such biologically realistic systems by establishing global classical solvability for the fully coupled model in both two- and three-dimensional domains, while accounting for bidirectional phenotypic transitions, cell population-specific proliferation mechanisms, and ECM-directed haptotactic migration of both proliferative ECCs and nonproliferative MCCs, and ECM-repellent haptotactic migration of CAFs.

\subsection*{Main results} 
We now state the main results of this work, formulated in the theorems that follow. Throughout the discussion, we assume that the initial data satisfy the following conditions for any $\vartheta\in(0,1)$:
\begin{equation}
\label{ic_assum}
\begin{cases}
c_0^E \in \bigcup_{\vartheta \in (0, 1)} C^{2 + \vartheta}(\overline{\Omega}) ~~\text{is such that}~~ c_0^E \geq 0 ~~\text{in $\overline{\Omega}$}~~\text{and}~~\partial_\nu c_0^E = 0~~\text{on}~~\partial \Omega, \\[5pt]
c_0^M \in \bigcup_{\vartheta \in (0, 1)} C^{2 + \vartheta}(\overline{\Omega})~~\text{is such that}~~c_0^M \geq 0~~\text{in $\overline{\Omega}$}~~\text{and}~~\partial_\nu c_0^M = 0~~\text{on}~~\partial \Omega, \\[5pt]
c_0^F \in \bigcup_{\vartheta \in (0, 1)} C^{2 + \vartheta}(\overline{\Omega}) ~~\text{is such that}~~ c_0^F \geq 0~~\text{in $\overline{\Omega}$} ~~\text{and}~~ \partial_\nu c_0^F = 0 ~~\text{on}~~ \partial \Omega, \\[5pt]
b_0 \in \bigcup_{q > 2} W^{1, q}(\Omega) ~\text{is such that}~ b_0 \geq 0, \\[5pt]
m_0 \in \bigcup_{q > 2} W^{1, q}(\Omega) ~\text{is such that}~ m_0 \geq 0, \qquad \text{and} \\[5pt]
v_0 \in \bigcup_{\vartheta \in (0, 1)} C^{2 + \vartheta}(\overline{\Omega}) ~~\text{is such that}~~ v_0 \geq 0~~\text{in $\overline{\Omega}$} ~~\text{and}~~ \partial_\nu v_0 = 0 ~~\text{on}~~ \partial \Omega.
\end{cases}
\end{equation}
Our first main result concerns the global classical solvability of \eqref{model1}-\eqref{model3} in two spatial dimensions and will be established in Section~\ref{sec3}. It is stated as follows.
\begin{theorem}
    \label{2D_Class_Theorem}
     Let $\Omega \subset \mathbb{R}^2$ be a bounded domain with a sufficiently smooth boundary $\partial\Omega$. Let $\delta_1$, $\delta_2$, $\delta_3$, $\lambda_b$, $\lambda_F$, $\lambda_m$, $\mu_E$, $\mu_M$, 
     $\rho_b$, $\rho_E$, $\rho_F$, $\rho_m^E$, $\rho_m^F$, $\rho_v$, $\chi_F$ and $\chi_M$ be fixed positive constants. Suppose that \eqref{zeta} holds and that the initial data satisfy \eqref{ic_assum}. Assume, in addition, that 
     \begin{align*}
     \chi_E<2, \qquad \chi_M<2 \qquad \text{and} \qquad \chi_F<1.
    \end{align*}
  Then the system \eqref{model1}-\eqref{model3} admits a global classical solution.
\end{theorem}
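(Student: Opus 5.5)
We follow the standard route for haptotaxis PDE--ODE systems. First we invoke a local existence result with an extensibility criterion, proved by a fixed-point argument in the spirit of Tao--Winkler. It says that if the maximal existence time $T_{\max}$ is finite, then
\[
\limsup_{t\nearrow T_{\max}}\Big(\|c^E\|_{L^\infty}+\|c^M\|_{L^\infty}+\|c^F\|_{L^\infty}+\|b\|_{W^{1,q}}+\|m\|_{W^{1,q}}+\|v\|_{W^{1,q}}\Big)=\infty .
\]
So it suffices to bound these norms on $(0,\min\{T,T_{\max}\})$ for every finite $T$.

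\textbf{Step 1: elementary bounds.} The comparison principle gives nonnegativity of all components and $0\le v\le \max\{1,\|v_0\|_\infty\}$ from the ODE. Adding the $c^E$ and $c^M$ equations cancels the transition terms. The CAF equation with its logistic term gives an $L^1$ bound on $c^F$ and a space--time bound on $\int\!\!\int (c^F)^2$. Similarly, $\rho_E\int\!\!\int (c^E)^2$ and $\int\!\!\int c^E(c^E+c^M)$ are bounded on $(0,T)$. Since MCCs have no proliferation, we only get $\int c^E+c^M\le C(T)$, which is exponential growth at worst. Heat-semigroup estimates for $b$ and $m$ then give $L^p$ control from the $L^1$ and $L^2_{t,x}$ bounds on the sources.

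\textbf{Step 2: the energy functional (main obstacle).} To remove the cross-diffusion we substitute $c^E=a^Ee^{\chi_Ev}$, $c^M=a^Me^{\chi_Mv}$, $c^F=a^Fe^{-\chi_Fv}$. The new unknowns satisfy divergence-form equations $\partial_t a=e^{\mp\chi v}\nabla\cdot(e^{\pm\chi v}\nabla a)+\dots$, and the ODE contributes terms $\mp\chi a\,\partial_t v$. The plan is to test these with $\log$-type and $L^2$-type multipliers and build a combined functional
\[
\mathcal F=\int \big(c^E\ln c^E+c^M\ln c^M+c^F\ln c^F\big)+\kappa\int\big(|\nabla b|^2+|\nabla m|^2\big)+\kappa'\int|\nabla v|^2,
\]
or the $L^2$ analogue in the $a$-variables. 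Differentiating $\nabla v$ in the ODE gives
\[
\partial_t\nabla v=-(\rho_vc^F+\delta_1m+\delta_2c^E+\delta_3c^M)\nabla v+(1-v)\rho_v\nabla c^F-v(\delta_1\nabla m+\delta_2\nabla c^E+\delta_3\nabla c^M).
\]
We therefore need to absorb $\int|\nabla c|\,|\nabla v|$-type terms into the dissipation $\int|\nabla c|^2/c$, or into $\int|\nabla a|^2$. This is where the conditions $\chi_E,\chi_M<2$ and $\chi_F<1$ enter. Young's inequality in the cross term $\chi\int \nabla c\cdot\nabla v$ gives a coefficient $\chi/2<1$ relative to the dissipation (respectively $\chi_F<1$ for CAFs, whose repulsion sign pairs with the positive production in $v$). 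The bound $v\le \|v\|_\infty$ keeps the ODE-derived gradient terms comparable. The 2D Gagliardo--Nirenberg inequality $\|\varphi\|_{L^3}^3\le C\|\varphi\|_{H^1}^2\|\varphi\|_{L^1}$ then controls the cubic reaction terms using the $L^1$ bounds, and a Gronwall argument yields $\sup_t\big(\int c\ln c+\|\nabla v\|_2^2\big)\le C(T)$. I would first try the weighted $L^2$ functional in the $a$-variables, since it makes the $\chi$-thresholds most transparent. I expect the bookkeeping of the $m$--$v$ feedback and the undamped $c^M$ to be the delicate part.

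\textbf{Step 3: bootstrap.} From the $L\log L$ bounds (2D), the $b$ and $m$ equations give $\|\nabla m\|_{L^p}$ for all finite $p$. The ODE for $\nabla v$ then gives $\|\nabla v\|_{L^p}$, via a Gronwall argument using the gradient bounds and testing the cell equations. Next we test with $c^{p-1}$ to get $L^p$ bounds for all $p$. A Moser--Alikakos iteration then gives $L^\infty$ bounds on $c^E,c^M,c^F$, and parabolic regularity finally gives $W^{1,q}$ bounds on $b,m,v$. This contradicts the extensibility criterion, so $T_{\max}=\infty$.
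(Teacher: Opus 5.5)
There is a genuine gap in Step 3, and it is tied to the extensibility criterion you chose. You plan to bound $\|\nabla v\|_{L^p}$ from the ODE \emph{before} having $L^p$ bounds on the cells, and then to test the original equations with $c^{p-1}$. But, as your own formula shows, $\partial_t\nabla v$ contains $\rho_v(1-v)\nabla c^F-v(\delta_2\nabla c^E+\delta_3\nabla c^M+\delta_1\nabla m)$. So any Gronwall bound for $\|\nabla v(\cdot,t)\|_{L^p}$ needs $\int_0^t\|\nabla c^i\|_{L^p}$, $i\in\{E,M,F\}$. At that stage you only know $\int_0^T\!\int_\Omega|\nabla c^i|^2/c^i<\infty$. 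Testing with $(c^i)^{p-1}$ would only give $\nabla (c^i)^{p/2}\in L^2$, and that test itself produces $\chi(p-1)\int_\Omega (c^i)^{p-1}\nabla c^i\cdot\nabla v$, which requires $\nabla v$. Since $v$ does not diffuse, nothing breaks this circle. The preceding claim also fails: an $L\log L$ (or $L^2$ space--time) bound on $\rho_m^Fc^F+\rho_m^Ec^E$ gives $\nabla m\in L^p$ via the Neumann heat semigroup only up to $p=2$ in two dimensions, not for all finite $p$. Finally, ``parabolic regularity'' gives nothing for $v$, which solves an ODE. With $\|v\|_{W^{1,q}}$ in your criterion, you would still have to prove that $L^\infty$ bounds on the cells control $\nabla v$. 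That is precisely what the refined Tao--Winkler criterion used in the paper packages; it requires only $\|a\|_{L^\infty}+\|u\|_{L^\infty}+\|w\|_{L^\infty}$.

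The missing idea is never to estimate $\nabla v$ at all. In the variables $a=e^{-\chi_Ev}c^E$, $u=e^{-\chi_Mv}c^M$, $w=e^{\chi_Fv}c^F$, the equations are in divergence form, e.g.\ $\partial_ta=e^{-\chi_Ev}\nabla\cdot(e^{\chi_Ev}\nabla a)+F_1$, with only zero-order couplings (the $\partial_tv$ contributions are replaced by the right-hand side of the ODE). Test with $e^{\chi_Ev}a^{p-1}$, $e^{\chi_Mv}u^{p-1}$, $e^{-\chi_Fv}w^{p-1}$, and use $m\in L^{p+1}$ (from $\|\nabla m\|_{L^2}$ and the 2D Sobolev embedding). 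This gives
\[
\frac{d}{dt}\int_\Omega\left(e^{\chi_Ev}a^p+e^{\chi_Mv}u^p+e^{-\chi_Fv}w^p\right)+c_p\int_\Omega\left(|\nabla a^{\frac p2}|^2+|\nabla u^{\frac p2}|^2+|\nabla w^{\frac p2}|^2\right)\le C_p\int_\Omega\left(a^{p+1}+u^{p+1}+w^{p+1}\right)+C_p.
\]
The right-hand side is absorbed by the modified Gagliardo--Nirenberg inequality $\|\varphi\|_{L^{2(p+1)/p}}^{2(p+1)/p}\le\varepsilon\|\nabla\varphi\|_{L^2}^2\int_\Omega|\varphi|^{2/p}|\ln|\varphi||+K(\|\varphi\|_{L^{2/p}}^{2(p+1)/p}+1)$ with $\varphi=a^{p/2},u^{p/2},w^{p/2}$. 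This is where the $L\log L$ bound, rather than a mere $L^1$ bound, is actually used. Moser--Alikakos then gives $L^\infty$ bounds for $a,u,w$, which suffices.

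Your Step 2 is in the right spirit, but the weights matter. The paper puts $\frac{\delta_2}{\chi_E},\frac{\delta_3}{\chi_M},\frac{\rho_v}{\chi_F},\delta_1$ on the entropies of $c^E,c^M,c^F,m$ and adds $\frac12\int_\Omega|\nabla v|^2/v+\int_\Omega|\nabla v|^2$. With this choice the cross terms $\chi\int_\Omega\nabla c\cdot\nabla v$ cancel exactly. The thresholds $\chi_E,\chi_M<2$, $\chi_F<1$ then come from comparing, e.g., $\frac{\delta_2}{\chi_E}\int_\Omega|\nabla c^E|^2/c^E$ with the $\frac{\delta_2}{2}\int_\Omega|\nabla c^E|^2/c^E$ generated by $\int_\Omega|\nabla v|^2$. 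With equal weights, the absorption conditions would depend on ratios of $\delta_2,\delta_3,\rho_v$. Your fallback, a weighted $L^2$ functional in the $a$-variables, cannot produce these thresholds, since no $\nabla v$ appears there. It also leaves the cubic term $\delta_3\chi_M\int_\Omega e^{2\chi_Mv}u^3v$ with no logistic damping to absorb it.
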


Our second main result, proved in Section~\ref{sec4}, concerns the global classical solvability of \eqref{model3w}, \eqref{model2}, and \eqref{model3} in three spatial dimensions. In this setting, the EMT and MET rates in \eqref{model3w} are prescribed as positive constants $\alpha_1$ and $\alpha_2$, respectively, so that the dynamics of TGF-$\beta$ density $b$ no longer need to be incorporated into the system. The result reads as follows:
\begin{theorem}
    \label{3D_Class_Theorem}
    Let $\Omega\subset\mathbb{R}^3$ be a bounded domain with a sufficiently smooth boundary $\partial\Omega$. Let $\alpha_1$, $\alpha_2$, $\delta_1$, $\delta_2$, $\lambda_F$, $\lambda_m$, $\rho_E$, $\rho_F$, $\rho_m^E$, $\rho_m^F$, $\rho_v$, $\chi_E$, $\chi_F$, and $\chi_M$ be fixed positive constants, and assume that $\delta_3=0$. Suppose that the initial data satisfy \eqref{ic_assum}. Assume, in addition, that
   \begin{align*}
   \chi_E<\frac{\rho_E}{2 \delta_2} \qquad\text{and}\qquad \chi_F<\frac{\rho_F}{\rho_v}.
   \end{align*}
  Then the system \eqref{model3w}, \eqref{model2}, and \eqref{model3} admits a global classical solution.
\end{theorem}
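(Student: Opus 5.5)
The plan has three stages: a local existence and extensibility statement, a chain of a priori bounds that ends in $L^\infty$ control of the cells together with a uniform bound on $\nabla v$, and finally an application of the extensibility criterion. The system \eqref{model3w} has constant rates $\alpha_1,\alpha_2$ and $\delta_3=0$.

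\textbf{Local existence and the ECM equation.} Local existence of a classical solution on $(0,T_{\max})$ follows by a standard fixed point argument, as in the two-dimensional case. The extensibility criterion reads: if $T_{\max}<\infty$, then
\[
\|c^E\|_{L^\infty}+\|c^M\|_{L^\infty}+\|c^F\|_{L^\infty}+\|\nabla v\|_{L^{q}}\to\infty
\]
for some $q>3$. Positivity, together with $v\le \max\{1,\|v_0\|_\infty\}$, comes from the comparison principle applied to the ODE. Integrating the sum of the three cell equations and using the logistic terms bounds $\int_\Omega (c^E+c^M+c^F)$ on finite time intervals. Note that $c^M$ has no damping, but the total mass does.

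The key structural step removes the taxis terms through the substitutions
\[
a^E=c^E e^{-\chi_E v},\qquad a^M=c^M e^{-\chi_M v},\qquad a^F=c^F e^{\chi_F v}.
\]
Each then satisfies a divergence-form equation $e^{\mp\chi v}\,\partial_t a=\nabla\cdot(e^{\mp\chi v}\nabla a)+\dots$, with zero-order terms that contain $\partial_t v$.

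\textbf{Coupled $L^p$ estimates.} For $c^E$, the term $\chi_E a^E\partial_t v$ contributes $+\chi_E\delta_2 (c^E)^2 v$ from $-\delta_2 c^E v$, with $v\le1$. Against the logistic damping $\rho_E (c^E)^2$ this is harmless when $\chi_E\delta_2<\rho_E/2$; the factor $p$ in the $L^p$ test needs care. For $c^F$, the term $\chi_F\rho_v c^F(1-v)$ generates $\chi_F\rho_v (c^F)^2$, which is dominated by $\rho_F(1+c^E)(c^F)^2$ exactly when $\chi_F<\rho_F/\rho_v$. The remaining terms $\chi_F\delta_1 m v\,c^F$ have a favourable sign for $a^F$, since the corresponding term in $\partial_t v$ is negative and is multiplied by $+\chi_F$. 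For $c^M$, $\partial_t v$ produces $-\chi_M\rho_v c^F(1-v)c^M$, which is favourable, and $+\chi_M\delta_1 m v c^M$. No $\delta_3$ term appears, which is why $\delta_3=0$ is imposed.

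I therefore test with $(a^E)^{p-1}e^{\chi_E v}$, $(a^M)^{p-1}e^{\chi_M v}$ and $(a^F)^{p-1}e^{-\chi_F v}$. Since $v$ is bounded, these weighted $L^p$ norms are equivalent to the usual ones, and I sum the three resulting inequalities with suitable weights. The transfer terms $\alpha_1 c^E$ and $\alpha_2 c^M$ are linear and absorbed by Young's inequality. The cross term $m\,c^M$ is handled by combining the $m$-equation with maximal Sobolev regularity: $\|m\|_{L^{r}}$ is controlled by $\|c^E\|_{L^{p}}+\|c^F\|_{L^p}$. The missing damping for $c^M$ is compensated by borrowing $(c^E)^{p+1}$ via $c^M(c^E)^{\dots}$ coupling, or, more simply, by Gronwall with $\int m (c^M)^p\le \|m\|_{L^{p+1}}\|c^M\|_{L^{p+1}}^{p}$. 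The term $\|c^M\|_{L^{p+1}}^p$ is then handled by Gagliardo--Nirenberg interpolation between the gradient term of $(c^M)^{p/2}$ and the $L^1$ mass. This yields $L^p$ bounds for every $p<\infty$ on finite time intervals.

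\textbf{$\nabla v$, $L^\infty$ bounds and conclusion.} From the $L^p$ bounds, parabolic regularity gives $m\in L^\infty((0,T);W^{1,\infty})$. For $\nabla v$, I differentiate the ODE,
\[
\partial_t\nabla v=-(\rho_v c^F+\delta_1 m+\delta_2 c^E)\nabla v+(1-v)\rho_v\nabla c^F-\delta_1 v\nabla m-\delta_2 v\nabla c^E .
\]
This equation requires gradients of the cells, which is the main obstacle. I bypass it by working with $a^{E},a^F$ and controlling $\nabla v$ in $L^q$ through Gronwall in an estimate coupled with $\|\nabla a\|_{L^q}$, using the semigroup estimates for the $a$-equations. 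Alternatively, following Tao and Winkler, I would use the identity $\nabla(v\text{-dependent weight})$ to express $\nabla c^F$ via $\nabla a^F$, and then close an estimate for $\|\nabla a^F\|_{L^q}$ through maximal regularity. Once this is done, Moser--Alikakos iteration gives $L^\infty$ bounds for $c^E,c^M,c^F$. The extensibility criterion then yields $T_{\max}=\infty$.
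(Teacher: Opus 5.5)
Your architecture (the substitutions \eqref{cov}, weighted $L^p$ testing, Moser--Alikakos, extensibility) matches the paper's. The core step, however, is to sum the three weighted $L^p$ inequalities and close them at once, and that fails because of a term you overlooked. In the $u$-equation, $-\chi_M u\,\partial_t v$ contains $-\chi_M\rho_v e^{-\chi_F v}uw(1-v)$ and $+\chi_M\delta_1 mvu$, and also $+\chi_M\delta_2 e^{\chi_E v}auv$ (see $F_2$ in \eqref{functions}). Testing with $e^{\chi_M v}u^{p-1}$ turns it into $(p-1)\delta_2\chi_M\int_\Omega e^{(\chi_E+\chi_M)v}au^pv$. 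This is a superlinear cross term in the one equation that has no damping. Young's inequality produces $\int_\Omega u^{p+1}$, which in three dimensions is not controlled by $\|\nabla u^{p/2}\|_{L^2}^2$ and the mass of $u$: the Gagliardo--Nirenberg power of the gradient is $\frac{6p}{3p-1}>2$. H\"older splittings lead, after Young, to powers of $\|a\|_{L^{p+1}}$ exceeding $p+1$, which the logistic damping at level $p$ cannot absorb. What makes the term tractable is a bound on $a$ in $L^\infty(0,\widehat T;L^q(\Omega))$ for some $q>\frac32$, obtained \emph{independently of} $u$. So the argument must be sequential, as in the paper:
\begin{itemize}
\item First bound $a$ and $w$ in every $L^p$ using no information on $u$ beyond $\int_\Omega u$. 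This requires absorbing $p\alpha_2\int_\Omega e^{\chi_Mv}a^{p-1}u$ into the competition term $-p\rho_E\int_\Omega e^{(\chi_E+\chi_M)v}a^pu$ plus $C\int_\Omega u$. Absorbing it into $\int_\Omega u^p$, as your ``absorbed by Young'' suggests, would re-couple $a$ to $u$.
\item Only then handle $\int_\Omega au^p\le\|a\|_{L^{(3p+2)/2}}\|u^{p/2}\|^2_{L^{2(3p+2)/(3p)}}$ by H\"older and Gagliardo--Nirenberg against the mass of $u$ (Lemma~\ref{lemma_uLp}).
\end{itemize}

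Two further steps are not closed.

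\textbf{The $m$-terms.} Your handling is circular. Estimating $\|m\|_{L^{p+1}}$ by $\|c^E\|_{L^p}+\|c^F\|_{L^p}$ inside the same differential inequality leaves, after Gagliardo--Nirenberg and Young, a power $\frac{(p+1)(3p-1)}{2p-1}$ of norms you are still trying to bound. That is superlinear relative to the $L^p$ functional, so Gronwall cannot close it. The missing ingredient is the a priori bound on $\sup_t\|\nabla m\|_{L^2}$ (Lemma~\ref{lemma_mbound1w}). It comes from the space-time $L^2$ bounds on $c^E,c^F$ furnished by the logistic terms, and yields $m\in L^\infty(0,\widehat T;L^3(\Omega))$. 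The paper uses it to absorb $\delta_1\chi_E\int_\Omega e^{\chi_Ev}a^2m$ into part of the logistic term, giving the $L^2$ bounds of Lemma~\ref{3d_lemma1}. Semigroup estimates then give $m\in L^\infty$, and from there all $L^p$ bounds for $a,w$ follow (Lemma~\ref{3dp_lemma1}).

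\textbf{The gradient of $v$.} Your extensibility criterion involves $\|\nabla v\|_{L^q}$, and the $\nabla v$ estimate, which you yourself call the main obstacle, is only gestured at. The paper never needs it. The refined criterion of Lemma~\ref{extensibility_imp} (from Tao--Winkler) rules out blow-up once $\|a\|_{L^\infty}+\|u\|_{L^\infty}+\|w\|_{L^\infty}$ is bounded on finite time intervals. Without that criterion or a fully worked-out gradient estimate for $v$, your last stage is incomplete.
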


The article is organized as follows. In Section \ref{sec2}, we develop the analytical preliminaries, including an equivalent reformulation of the model, a local existence result, an extensibility criterion, and basic a priori estimates for the solution components. In Section \ref{sec3} we deal with global classical solvability of  \eqref{model1}-\eqref{model3} in two spatial dimensions. Here, our approach relies on a Lyapunov-type functional yielding $L\log L$ estimates for the first three solution components under suitable smallness conditions on the haptotactic sensitivities. These estimates, combined with a modified Gagliardo-Nirenberg interpolation inequality \cite[Lemma A.5.]{tao2014energy}, provide the higher-order control required to derive $L^p \; (p > 1)$ bounds. A subsequent Moser-Alikakos iteration \cite{alikakos1979lp} yields $L^\infty$ estimates, thereby ensuring global classical solvability. This energy-based strategy is well established for two-dimensional haptotaxis systems \cite{dai2022new, tao2020global, li2021boundedness,tao2014energy,surulescu2021does}. 

The subsequent Section \ref{sec4} establishes global classical solution of \eqref{model3w}, \eqref{model2}, and \eqref{model3} in three spatial dimensions. In this setting, the energy-based approach used in the two-dimensional case is no longer available, as the modified Gagliardo-Nirenberg inequality \cite[Lemma A.5.]{tao2014energy} underlying that argument does not apply. We therefore adopt a coupled $L^p$-estimate technique \cite{pang2018global,dai2022new}. Under suitable restrictions on the model parameters, we first establish $L^2$ bounds for the first and third solution components and subsequently bootstrap them to arbitrary $L^p$ bounds for $p> 1$. These estimates, in turn, provide the necessary control to derive an $L^p$ bound for the second solution component. Finally, a Moser-Alikakos iteration yields $L^\infty$ bounds for all three components. Section \ref{sec5} presents numerical simulations illustrating the model dynamics. Finally, Section in \ref{sec6} we discuss both the analytical and numerical results and give some concluding remarks.

Before we begin, we state some notations and conventions that will be used throughout the sequel:
\begin{itemize}
  \item we write $\int_\Omega h(x)$ and $\int_0^T \int_\Omega h(x,t), ~ (T > 0)$ to denote $\int_\Omega h(x)\,dx$ and $\int_0^T \int_\Omega h(x,t)\,dx\,dt$, respectively;
  \item the numbering of the generic constants $A_i, C_i, K_i, \kappa_i, \Lambda_i, \Gamma_i > 0, ~~ i = 1, 2, \cdots$ is restricted to the proof in which they are used. 
\end{itemize}

\section{Preliminaries}
\label{sec2}
This section presents several preliminary analytical considerations that will be used in the sequel. We first introduce a suitable change of variables and derive an equivalent formulation of the model. We then present the local existence result together with a modified extensibility criterion. Finally, we derive several elementary yet crucial estimates that will be instrumental in the subsequent analysis. We emphasize that the results stated in this section remain valid in both two- and three-dimensional spatial domains.


\subsection{Change of variable and equivalent model}
Following a standard procedure in the analysis of haptotaxis systems 
\cite{walker2007global, tao2008global, tao2009global, marciniak2010boundedness, tao2011global, tao2014energy, pang2017global, ke2018note, giesselmann2018existence, tao2020global, dai2020global, li2021boundedness, surulescu2021does, dai2022new, fuest2024cross, TaoWinklerPreprint1, TaoWinkler2, pang2018global, Li2026Haptotaxis, zheng2022global}, we introduce the following substitutions, which transform the first three equation of \eqref{model1} into the divergence form:
\begin{align}
\label{cov}
   a = e^{- \chi_E v} c^E, \qquad  u = e^{- \chi_M v} c^M \qquad \text{and} \qquad w = e^{\chi_F v} c^F.
\end{align}
Using \eqref{cov} we rewrite the initial boundary-value problem in \eqref{model1}-\eqref{model3} in the following form
\begin{align}
    \label{sys1}
    \begin{cases}
        \partial_t a = e^{-\chi_E v} \nabla \cdot \big(e^{\chi_E v} \nabla a\big) + F_1\big(a, u, w, b, m, v\big), & \qquad x \in \Omega, t > 0, \\[7pt]
        \partial_t u = e^{-\chi_M v} \nabla \cdot \big(e^{\chi_M v} \nabla u\big) + F_2\big(a, u, w, b, m, v\big), & \qquad x \in \Omega, t > 0, \\[5pt]
        \partial_t w = e^{\chi_F v} \nabla \cdot \big(e^{-\chi_F v} \nabla w\big) + F_3\big(a, u, w, m, v\big), & \qquad x \in \Omega, t > 0, \\[5pt]
        \partial_t b = \Delta b -\lambda_{b}\,b + F_4\big(v, w\big), & \qquad x \in \Omega, t > 0, \\[5pt]
        \partial_t m = \Delta m - \lambda_m m + F_5\big(a, v, w\big), & \qquad x \in \Omega, t > 0, \\[5pt]
        \partial_t v = F_6\big(a, u, w, m, v\big), & \qquad x \in \Omega, t > 0, \\[5pt]
        \partial_\nu a = \partial_\nu u = \partial_\nu w = \partial_\nu b = \partial_\nu m = 0, & \qquad x \in \partial \Omega, t > 0, \\[5pt]
        a(x, 0) = c^E_0(x)e^{-\chi_E v_0(x)}, \; u(x, 0) = c^M_0(x)e^{-\chi_M v_0(x)}, \\[5pt]
        w(x, 0) = c^F_0(x)e^{\chi_F v_0(x)}, \; b(x, 0) = b_0(x), \;  m(x, 0) = m_0(x),  & \qquad x \in \Omega,
    \end{cases}
\end{align}
where, 
\begin{equation}
\label{functions}
    \begin{cases}
        F_1\big(a, u, w, b, m, v\big) := - \mu_E \zeta_1(b) a + \mu_M \zeta_2(b) e^{(\chi_M - \chi_E) v} u \\[7pt]
        \hspace{3.5cm} + \rho_E a\left(1 - e^{\chi_E v} a - e^{\chi_M v} u - e^{- \chi_F v} w - v\right)  ,  \\[7pt]
         \hspace{3.5cm} - \rho_v \chi_E e^{- \chi_F v} a w (1 - v)+ \delta_1 \chi_E a m v  \\[7pt]
         \hspace{3.5cm} + \delta_2 \chi_E e^{\chi_E v} a^2 v + \delta_3 \chi_E e^{\chi_M v} a u v,\\[7pt]
        F_2\big(a, u, w, b, m, v\big) :=  \mu_E \zeta_1(b) e^{(\chi_E -\chi_M) v} a -\mu_M \zeta_2(b)u +\delta_1 \chi_M mvu\\[5pt]
         \hspace{3.5cm} + \delta_2 \chi_M e^{\chi_E v} a u v + \delta_3\chi_M e^{\chi_M v} u^2 v -\rho_v \chi_Me^{-\chi_F v} uw \big(1-v\big), \\[7pt]
        F_3\big(a, u, w, m, v\big) := \rho_F(1+e^{\chi_E v}a)w\big(1-e^{\chi_E v}a-e^{\chi_M v}u-e^{-\chi_F v}w-v\big) \\[5pt]
        \hspace{3.2cm}-\lambda_F w -\chi_F\delta_1 mvw  -\chi_F\delta_2 e^{\chi_E v}avw \\[5pt]
        \hspace{3.2cm}-\chi_F\delta_3 e^{\chi_M v}uvw +\chi_F\rho_v e^{-\chi_F v}w^2\big(1-v\big), \\[7pt]
       \hspace{1.8cm} F_4\big(v, w\big) :=  \rho_{b}\,e^{-\chi_F v} w, \\[7pt]
        \hspace{1.2cm} F_5\big(a, v, w\big) := \rho_m^F \,e^{-\chi_F v} w + \rho_m^E e^{\chi_E v} a, \\[7pt]
        F_6\big(a, u, w, m, v\big) := \rho_v e^{-\chi_F v} w \big(1 - v\big) - \delta_1 m v - \delta_2 e^{\chi_E v} a v - \delta_3 e^{\chi_M v} uv.
    \end{cases}
\end{equation}
\begin{remark}
    As pointed out in \cite{tao2011global, tao2014energy, li2021boundedness, surulescu2021does}, the systems \eqref{model1}-\eqref{model3} and \eqref{sys1} are equivalent in the sense of classical solutions. Estimates that do not involve spatial derivatives can therefore be transferred directly via \eqref{cov}, since $a$ corresponds to $c^E$, $u$ to $c^M$, and $w$ to $c^F$ respectively. However, estimates involving gradient terms require additional care, as highlighted in \cite{surulescu2021does}. Throughout the paper, we will tactically switch between these variables using \eqref{cov}.
\end{remark}

\subsection{Local existence and a refined extensibility criterion}
Using well-established strategies for constructing solutions in haptotaxis systems \cite{tao2014energy, pang2017global}, we obtain the following result concerning the local existence of classical solutions to \eqref{sys1}.
\begin{lemma}
    \label{local_existence}
Let $\Omega \subset \mathbb{R}^N$, $N \in \{2,3\}$, be a bounded domain with sufficiently smooth boundary $\partial\Omega$. Let $\delta_1$, $\delta_2$, $\delta_3$, $\lambda_b$, $\lambda_F$, $\lambda_m$, $\mu_E$, $\mu_M$, $\rho_b$, $\rho_E$, $\rho_F$, $\rho_m^E$, $\rho_m^F$, $\rho_v$, $\chi_E$, $\chi_F$ and $\chi_M$ be fixed positive constants, and assume that \eqref{zeta} holds true and the initial data satisfy \eqref{ic_assum}. Then there exist $T_{\max}\in(0,\infty)$ and a classical solution $(a,u,w,b,m,v)$ to \eqref{sys1} in $\Omega\times(0,T_{\max})$ such that
\begin{equation}
\begin{cases}
\label{sol1}
a \in \bigcup_{\vartheta \in (0, 1)} C^{2+\vartheta,\,1+\frac{\vartheta}{2}}(\overline{\Omega}\times[0,T_{\max})), \\[5pt]
u \in \bigcup_{\vartheta \in (0, 1)} C^{2+\vartheta,\,1+\frac{\vartheta}{2}}(\overline{\Omega}\times[0,T_{\max})), \\[5pt]
w \in \bigcup_{\vartheta \in (0, 1)} C^{2+\vartheta,\,1+\frac{\vartheta}{2}}(\overline{\Omega}\times[0,T_{\max})), \\[5pt]
b \in \bigcup_{q > 2}  C^0\big([0,T_{\max});W^{1,q}(\Omega)\big)
\cap \bigcup_{\vartheta \in (0, 1)} C^{2+\vartheta,\,1+\frac{\vartheta}{2}}(\overline{\Omega}\times(0,T_{\max})),   \\[5pt]
m \in \bigcup_{q > 2}  C^0\big([0,T_{\max});W^{1,q}(\Omega)\big)
\cap \bigcup_{\vartheta \in (0, 1)} C^{2+\vartheta,\,1+\frac{\vartheta}{2}}(\overline{\Omega}\times(0,T_{\max})) \qquad \text{and} \\[5pt]
v \in \bigcup_{\vartheta \in (0, 1)} C^{2+\vartheta,\,1+\frac{\vartheta}{2}}(\overline{\Omega}\times[0,T_{\max})).
\end{cases}
\end{equation}
Moreover, the components $a,u,w,b,m,v$ are nonnegative in $\Omega\times(0,T_{\max})$, and if $T_{\max}<\infty$, then for all $\vartheta\in(0,1)$,
\begin{align}
\label{extensibility_criterion1}
\limsup_{t\nearrow T_{\max}}
\Big(
\nonumber \|a(\cdot,t)\|_{C^{2+\vartheta}(\overline{\Omega})}
& +\|u(\cdot,t)\|_{C^{2+\vartheta}(\overline{\Omega})}
+\|w(\cdot,t)\|_{C^{2+\vartheta}(\overline{\Omega})} \\
&+\|b(\cdot,t)\|_{C^{2+\vartheta}(\overline{\Omega})}
+\|m(\cdot,t)\|_{C^{2+\vartheta}(\overline{\Omega})}
+\|v(\cdot,t)\|_{C^{2+\vartheta}(\overline{\Omega})}
\Big)
= \infty.
\end{align}
\end{lemma}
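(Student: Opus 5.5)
We construct the local solution by a Banach fixed point argument adapted to the PDE--ODE structure of \eqref{sys1}, following \cite{tao2014energy, pang2017global}. Fix $\vartheta\in(0,1)$ so that all initial data in \eqref{ic_assum} lie in the corresponding spaces. For small $T>0$ and $R>0$ large compared with the norms of the data, let $X_T$ be the closed set of triples $(\bar a,\bar u,\bar w)$ in $C^{\vartheta,\vartheta/2}(\overline\Omega\times[0,T])$ with norm at most $R$ and initial values as in \eqref{sys1}. Given such a triple, we proceed in three steps.
\begin{enumerate}
\item Solve the linear Neumann problems for $b$ and $m$ with sources $F_4$ and $F_5$. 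Here $v$ is still to be determined, so we first solve the $v$-equation pointwise in $x$.
\item For each $x$, the equation $\partial_t v=F_6(\bar a,\bar u,\bar w,m,v)$ is an ODE with Lipschitz right-hand side. Because of the coupling through $m$, we treat the pair $(m,v)$ together by an inner contraction, in which the heat semigroup bound $\|e^{t\Delta}\|_{L^\infty\to W^{1,q}}\le Ct^{-1/2}$ gives smallness.
\item Solve the linear parabolic equations $\partial_t a=\Delta a+\chi_E\nabla v\cdot\nabla a+F_1(\bar a,\dots)$, and analogously for $u$ and $w$. Their drift coefficients $\nabla v$ are H\"older continuous because $v$ inherits $C^{1+\vartheta}$ spatial regularity from $v_0$ and from the data. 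Schauder theory then gives $C^{2+\vartheta,1+\vartheta/2}$ solutions.
\end{enumerate}

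The main obstacle is regularity of $v$. Since $v$ solves an ODE, it has only the spatial regularity of the data and of $\bar a,\bar u,\bar w,m$, and $\nabla v$ appears in the drift. To close the argument, I would instead set up the fixed point in $C^{1+\vartheta,(1+\vartheta)/2}$ for $(a,u,w)$ and in $W^{1,q}$ with $q>N$ for $m$. Differentiating the ODE in $x$ gives a linear ODE for $\nabla v$, with forcing $\nabla\bar a,\nabla\bar w,\nabla m$. This yields $v\in C^{1+\vartheta}$ with bounds that grow slowly in $T$, which suffices for Schauder estimates in the $a,u,w$ equations. Contraction for small $T$ then follows from the Lipschitz dependence of all maps on $(\bar a,\bar u,\bar w)$.

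A bootstrap upgrades the regularity to \eqref{sol1}. The fixed point gives $a,u,w\in C^{2+\vartheta,1+\vartheta/2}$. Then $m$ and $b$ are H\"older regular for $t>0$ by parabolic Schauder theory, with only $C^0([0,T);W^{1,q})$ near $t=0$ since $b_0,m_0\in W^{1,q}$. Finally, $v$ gains $C^{2+\vartheta}$ in space by differentiating the ODE twice, using $v_0\in C^{2+\vartheta}$.

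Nonnegativity comes from the parabolic comparison principle. The reaction terms satisfy $F_1\ge -Ca$ when $u\ge0$, and the $u$- and $w$-equations behave similarly. We apply comparison first to $w$, whose reaction term is proportional to $w$. Then $v\ge0$, since $F_6\ge -Cv$ when $w\ge0$. Next $(a,u)$ form a quasimonotone cooperative pair, because the transfer terms $\mu_M\zeta_2 e^{(\chi_M-\chi_E)v}u$ and its counterpart are nonnegative, so comparison gives $a,u\ge0$. Lastly $b,m\ge0$, because their sources are nonnegative.

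Setting $T_{\max}$ to be the supremum of existence times yields the extensibility criterion \eqref{extensibility_criterion1}. If the $\limsup$ in \eqref{extensibility_criterion1} were finite, then all components at times close to $T_{\max}$ would be uniformly bounded in $C^{2+\vartheta}$. The existence time from the fixed point depends only on these norms, so restarting the construction at such times would extend the solution beyond $T_{\max}$, a contradiction.
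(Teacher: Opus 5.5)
Your overall route (a contraction for the transformed system \eqref{sys1}, Schauder theory for the $a$-, $u$-, $w$-equations, comparison for the signs, and restarting for \eqref{extensibility_criterion1}) is the standard fixed-point construction. The paper does not carry it out and simply cites \cite[Lemma~2.1]{tao2014energy}. However, the step you yourself call the main obstacle does not close as written.

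If $m$ is only controlled in $C^0([0,T];W^{1,q}(\Omega))$, the linear ODE for $\nabla v$ is forced by $-\delta_1 v\nabla m$, and $\nabla m(\cdot,t)$ is merely in $L^q(\Omega)$. That gives $\nabla v(\cdot,t)\in L^q(\Omega)$, not $\nabla v\in C^{\vartheta,\vartheta/2}(\overline\Omega\times[0,T])$, which is what the drift $\chi_E\nabla v\cdot\nabla a$ needs in the Schauder step. The missing ingredient is parabolic smoothing of the $m$-equation with an integrable singularity at $t=0$. For $q>N$ one has $\|m(\cdot,t)\|_{C^{1+\vartheta}(\overline\Omega)}\le Ct^{-\gamma}$ for some $\gamma<1$. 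So the inner contraction for $(m,v)$ should be run in a time-weighted H\"older norm for $m$, not the plain $W^{1,q}$ norm. Then the time integrals of $\nabla m$ entering $\nabla v$ are H\"older continuous.

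The same point reappears in your bootstrap. Differentiating the ODE twice requires $\int_0^t D^2 m$ to be $C^\vartheta$ in space. Via $\Delta\int_0^t m=m(t)-m_0+\lambda_m\int_0^t m-\int_0^t F_5$, this holds only if $m_0\in C^\vartheta(\overline\Omega)$, i.e.\ $\vartheta\le 1-N/q$. Likewise $F_1$, which contains $\delta_1\chi_E amv$ and $\zeta_i(b)$, is H\"older up to $t=0$ only under such a restriction. So $\vartheta$ must be chosen small in terms of $q$, not merely so that $c_0^E,c_0^M,c_0^F,v_0$ lie in $C^{2+\vartheta}$.

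You also tacitly use $q>N$, whereas \eqref{ic_assum} only provides $q>2$. For $N=3$ and $q\in(2,3]$ the datum $m_0$ may be unbounded. Then, for instance when $v_0$ is bounded away from zero, $\partial_t v=\rho_v c^F(1-v)-\delta_1 mv-\dots$ is unbounded near $t=0$. Hence the asserted $v\in C^{2+\vartheta,1+\frac{\vartheta}{2}}(\overline\Omega\times[0,T_{\max}))$ cannot hold. This is a defect of the statement itself: the cited lemma of Tao and Winkler is two-dimensional. Your restriction $q>N$ is exactly the repair needed, but you should state it explicitly as an extra hypothesis for $N=3$ rather than use it silently.
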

\begin{proof}
For the proof of the local existence of classical solutions with regularity as specified in \eqref{sol1}, together with the extensibility criterion \eqref{extensibility_criterion1}, we refer to \cite[Lemma~2.1]{tao2014energy}.
\end{proof}

Lemma \ref{local_existence} ensures the local existence of solutions to the original problem \eqref{model1}-\eqref{model3} in the following sense.
\begin{lemma}
\label{local_existence2}
Let $T_{\max}$ and $(a, u, w, b, m, v)$ be as in Lemma~\ref{local_existence}, and define
\begin{align*}
c^E = e^{\chi_E v} a, \qquad   c^M = e^{\chi_M v} u \qquad \text{and} \qquad c^F = e^{-\chi_F v} w . 
\end{align*}
Then
\begin{align*}
\begin{cases}
c^E \in C^{2, 1}\left(\overline{\Omega} \times [0, T_{\max})\right), \\[5pt]
c^M \in C^{2, 1}\left(\overline{\Omega} \times [0, T_{\max})\right), \\[5pt]
c^F \in C^{2, 1}\left(\overline{\Omega} \times [0, T_{\max})\right), \\[5pt]
b \in \bigcup_{q > 2}  C^0\big([0,T_{\max});W^{1,q}(\Omega)\big)
\cap C^{2, 1}(\overline{\Omega}\times(0,T_{\max})), \\[5pt]
m \in \bigcup_{q > 2}  C^0\big([0,T_{\max});W^{1,q}(\Omega)\big)
\cap C^{2, 1}(\overline{\Omega}\times(0,T_{\max})) \qquad \text{and}  \\[5pt]
v \in C^{2, 1}\left(\overline{\Omega} \times [0, T_{\max})\right).
\end{cases}
\end{align*}
and $(c^E,c^M,c^F,b,m,v)$ solves \eqref{model1}-\eqref{model3} in the classical sense in $\Omega \times (0,T_{\max})$.
\end{lemma}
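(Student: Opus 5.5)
The claim follows by reversing the substitution \eqref{cov} and checking regularity, the equations, the boundary conditions and the initial data one at a time, using only the properties of $(a,u,w,b,m,v)$ supplied by Lemma~\ref{local_existence}.

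\emph{Regularity.} By \eqref{sol1}, the functions $a$, $u$, $w$ and $v$ lie in $C^{2+\vartheta,1+\frac{\vartheta}{2}}(\overline{\Omega}\times[0,T_{\max}))$ for some $\vartheta\in(0,1)$, and hence in $C^{2,1}(\overline{\Omega}\times[0,T_{\max}))$. The exponential $e^{\pm\chi v}$ is a smooth function of $v$, so it also belongs to $C^{2,1}(\overline{\Omega}\times[0,T_{\max}))$. Products of $C^{2,1}$ functions are again $C^{2,1}$ by the product rule, since only two spatial derivatives and one time derivative are involved. This gives the stated regularity of $c^E$, $c^M$ and $c^F$. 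The statements for $b$, $m$ and $v$ are taken directly from \eqref{sol1}.

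\emph{Equations.} Take $c^E=e^{\chi_E v}a$ as the model case. First,
\[
\nabla c^E=e^{\chi_E v}\nabla a+\chi_E c^E\nabla v,
\]
so that $\nabla c^E-\chi_E c^E\nabla v=e^{\chi_E v}\nabla a$. Consequently
\[
\Delta c^E-\chi_E\nabla\cdot(c^E\nabla v)=\nabla\cdot\big(e^{\chi_E v}\nabla a\big).
\]
Next,
\[
\partial_t c^E=e^{\chi_E v}\partial_t a+\chi_E c^E\partial_t v .
\]
Substituting the $a$-equation of \eqref{sys1} gives
\[
\partial_t c^E=\nabla\cdot\big(e^{\chi_E v}\nabla a\big)+e^{\chi_E v}F_1+\chi_E c^E F_6 .
\]
It remains to check the algebraic identity that $e^{\chi_E v}F_1+\chi_E c^E F_6$ equals the reaction terms of the first equation in \eqref{model1}. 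The point is that the haptotactic correction terms in $F_1$, namely $-\rho_v\chi_E e^{-\chi_F v}aw(1-v)$ and the $\delta_i\chi_E$ terms, are built exactly to cancel against $\chi_E c^E F_6$. After this cancellation, the remaining terms in $e^{\chi_E v}F_1$ rewrite via \eqref{cov} as
\[
-\mu_E\zeta_1(b)c^E+\mu_M\zeta_2(b)c^M+\rho_E c^E\big(1-c^E-c^M-c^F-v\big).
\]
The equations for $c^M$ and $c^F$ are handled in the same way. For $c^F=e^{-\chi_F v}w$ the sign of $\chi_F$ is reversed, which produces the term $+\chi_F\nabla\cdot(c^F\nabla v)$ and the correction $-\chi_F c^F F_6$. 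The equations for $b$, $m$ and $v$ are unchanged; one only has to rewrite $F_4$, $F_5$ and $F_6$ through \eqref{cov}.

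\emph{Boundary and initial conditions.} Multiplying $\partial_\nu a=0$ by $e^{\chi_E v}$ and using the gradient identity above gives
\[
\partial_\nu c^E-\chi_E c^E\partial_\nu v=e^{\chi_E v}\partial_\nu a=0 \quad\text{on } \partial\Omega .
\]
The flux conditions for $c^M$ and $c^F$ in \eqref{model2} follow in the same way, and those for $b$ and $m$ are inherited directly. At $t=0$ we have $c^E(\cdot,0)=e^{\chi_E v_0}c_0^E e^{-\chi_E v_0}=c_0^E$, and analogously for the other components, so \eqref{model3} holds.

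\emph{Main difficulty.} There is no analytic obstacle here. The only delicate step is the bookkeeping: one must verify term by term that $e^{\chi_E v}F_1+\chi_E c^E F_6$, and its analogues for $c^M$ and $c^F$, collapse exactly to the reaction terms in \eqref{model1}, with the signs for the repulsive $c^F$ component tracked carefully.
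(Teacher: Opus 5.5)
Your proposal is correct and is essentially the paper's own argument: the paper simply says the lemma follows directly from \eqref{sys1}--\eqref{functions}, and you have made that explicit. Your product-rule regularity, the flux identities, and the cancellations $e^{\chi_E v}F_1+\chi_E c^E F_6$, $e^{\chi_M v}F_2+\chi_M c^M F_6$ and $e^{-\chi_F v}F_3-\chi_F c^F F_6$ all check out term by term. Your initial-data verification uses $v(\cdot,0)=v_0$, which is intended even though the display of \eqref{sys1} omits it.
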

\begin{proof}
    This follows directly from \eqref{sys1}-\eqref{functions}.
\end{proof}
In a recent contribution, Tao and Winkler established in \cite[Proposition~3.1]{TaoWinklerPreprint1} a relaxed version of the extensibility criterion in \eqref{extensibility_criterion1}. This result yields the following lemma, a crucial ingredient in the proof of global existence of classical solutions to \eqref{model1}-\eqref{model3} in both two- and three-dimensional cases.

%
\begin{lemma}
\label{extensibility_imp}
Let \eqref{ic_assum} hold. Then the solution $(a,u,w,b,m,v)$ of \eqref{sys1} from Lemma~\ref{local_existence} has the following extensibility alternative: either $T_{\max}=\infty$, or
\begin{align}
\label{extensibility_criterion2}
\limsup_{t\nearrow T_{\max}}
\Big(
\|a(\cdot,t)\|_{L^\infty(\Omega)} 
+
\|u(\cdot,t)\|_{L^\infty(\Omega)}
+
\|w(\cdot,t)\|_{L^\infty(\Omega)}
\Big)
= \infty.
\end{align}
\end{lemma}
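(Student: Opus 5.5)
We argue by contradiction. Suppose $T_{\max}<\infty$ while
\[
\sup_{t\in(0,T_{\max})}\big(\|a(\cdot,t)\|_{L^\infty(\Omega)}+\|u(\cdot,t)\|_{L^\infty(\Omega)}+\|w(\cdot,t)\|_{L^\infty(\Omega)}\big)\le K
\]
for some $K>0$. We will show that the $C^{2+\vartheta}$ norms in \eqref{extensibility_criterion1} then stay bounded, which contradicts Lemma~\ref{local_existence}. The structure follows \cite[Proposition~3.1]{TaoWinklerPreprint1}, and we only have to check that the extra couplings in \eqref{functions} fit into it.

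\textbf{Step 1 (bounds for $v$, $b$, $m$).}
\begin{itemize}
\item By nonnegativity, the $v$-equation gives $\partial_t v\le \rho_v e^{-\chi_F v}w(1-v)$. A comparison argument then yields $0\le v\le \max\{1,\|v_0\|_{L^\infty(\Omega)}\}$, independently of $K$.
\item The sources $F_4$ and $F_5$ are bounded in $L^\infty$ because $a$, $w$, $v$ are. Standard $L^p$–$L^q$ semigroup estimates for the Neumann heat semigroup, combined with $b_0,m_0\in W^{1,q}(\Omega)$ for some $q>2$, give bounds for $b$ and $m$ in $L^\infty((0,T_{\max});W^{1,q}(\Omega))$, and hence in $L^\infty$.
\end{itemize}

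\textbf{Step 2 (gradient control of $v$).} This is the core step and the main obstacle. Differentiate the ODE for $v$:
\[
\partial_t\nabla v=-\Big(\rho_v e^{-\chi_F v}w+\rho_v\chi_F e^{-\chi_F v}w(1-v)+\delta_1 m+\ldots\Big)\nabla v+\rho_v e^{-\chi_F v}(1-v)\nabla w-\delta_1 v\nabla m-\delta_2 v\nabla(e^{\chi_E v}a)-\ldots
\]
The coefficient of $\nabla v$ is bounded, so $|\nabla v(\cdot,t)|$ is controlled by $|\nabla v_0|$ plus time integrals of $|\nabla a|$, $|\nabla u|$, $|\nabla w|$ and $|\nabla m|$. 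The term $\nabla m$ is already handled: $m$ can be bootstrapped into $L^\infty((0,T);W^{1,\infty}(\Omega))$, since its source lies in $L^\infty$.

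For the cell components we use the divergence-form equations in \eqref{sys1}. Each has a uniformly elliptic operator with coefficients $e^{\pm\chi v}$ that are bounded above and below, and bounded right-hand sides $F_1$, $F_2$, $F_3$. Note that $F_1,F_2,F_3$ contain $m$ and $e^{\chi v}$ multiplied by $a,u,w$, all bounded.

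The approach I would try first is the one of Tao–Winkler:
\begin{itemize}
\item test the $a$-, $u$-, $w$-equations against $-\nabla\cdot(e^{\pm\chi v}\nabla\,\cdot)$, or use $L^p$ maximal regularity with time-dependent coefficients;
\item combine this with the integrated ODE for $\nabla v$;
\item obtain a Gronwall-type closed inequality for
\[
\|\nabla v\|_{L^p(\Omega)}+\int_0^t\|\Delta a\|_{L^p(\Omega)}^p+\int_0^t\|\Delta u\|_{L^p(\Omega)}^p+\int_0^t\|\Delta w\|_{L^p(\Omega)}^p .
\]
\end{itemize}
On the finite interval $(0,T_{\max})$ this should give $\nabla v\in L^\infty((0,T_{\max});L^p(\Omega))$ for some $p>N$.

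\textbf{Step 3 (Hölder regularity).} With $\nabla v\in L^p$ for some $p>N$, the equations for $a$, $u$, $w$ can be written as
\[
\partial_t a=\Delta a+\chi_E\nabla v\cdot\nabla a+F_1,
\]
and similarly for $u$ and $w$. Parabolic Hölder theory in divergence form (Porzio–Vespri, or Ladyzhenskaya-type results) gives Hölder bounds for $a$, $u$, $w$. Hölder bounds for $b$ and $m$ follow in the same way.

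Feeding these into the ODE for $v$ gives $v\in C^{\vartheta,\vartheta/2}$, and then a $C^{1+\vartheta}$ bound in space through the differentiated ODE. Schauder theory then lifts $a$, $u$, $w$, $b$ and $m$ to $C^{2+\vartheta,1+\vartheta/2}$ on $\overline{\Omega}\times[\tau,T_{\max})$. Differentiating the ODE once more gives the $C^{2+\vartheta}$ bound for $v$.

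This contradicts \eqref{extensibility_criterion1}, which proves the alternative \eqref{extensibility_criterion2}. The genuinely delicate part is closing the Step 2 estimate for $\nabla v$ without any prior gradient bounds, given the extra $\nabla(e^{\chi_E v}a)$- and $\nabla(e^{\chi_M v}u)$-type terms. I expect this to follow the argument of \cite[Proposition~3.1]{TaoWinklerPreprint1} essentially verbatim.
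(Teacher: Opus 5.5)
Your reduction is the paper's: the paper gives no argument of its own for this lemma and simply asserts that Proposition~3.1 of Tao and Winkler carries over. As a standalone proof, however, your sketch has two genuine gaps, one in each step you actually spell out.

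\textbf{Step~2 is not closed.} This step is the whole content of the lemma, and you leave it at ``should give''. The quantity you propose to Gronwall, $\|\nabla v\|_{L^p}+\int_0^t\|\Delta a\|_{L^p}^p$ via maximal regularity, is circular. The operator $e^{-\chi_E v}\nabla\cdot(e^{\chi_E v}\nabla\,\cdot)=\Delta+\chi_E\nabla v\cdot\nabla$ has only bounded measurable coefficients, so nothing controls $\Delta a$ before $\nabla v$ is controlled. The missing idea is that your other option, testing with $-\nabla\cdot(e^{\chi_E v}\nabla a)$, produces \emph{no} cross term, because $v_t=F_6$ is bounded:
\[
\frac12\frac{d}{dt}\int_\Omega e^{\chi_E v}|\nabla a|^2+\int_\Omega e^{-\chi_E v}\big|\nabla\cdot(e^{\chi_E v}\nabla a)\big|^2=\frac{\chi_E}{2}\int_\Omega e^{\chi_E v}v_t|\nabla a|^2-\int_\Omega F_1\,\nabla\cdot(e^{\chi_E v}\nabla a).
\]
Hence $g_a:=e^{-\chi_E v}\nabla\cdot(e^{\chi_E v}\nabla a)$ is bounded in $L^2(\Omega\times(\tau,T_{\max}))$, and the same holds for $u$ and $w$. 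Now write $\Delta a=g_a-\chi_E\nabla v\cdot\nabla a$ and use $\|\nabla a\|_{L^4}^2\le C\|a\|_{L^\infty}\|a\|_{W^{2,2}(\Omega)}$; this is exactly where the $L^\infty$ hypothesis enters. This gives $\int_\Omega|\nabla a|^4\le C\|g_a\|_{L^2(\Omega)}^2+C\int_\Omega|\nabla v|^4+C$. Inserting this into $\frac{d}{dt}\int_\Omega|\nabla v|^4\le C\int_\Omega|\nabla v|^4+C\int_\Omega(|\nabla a|^4+|\nabla u|^4+|\nabla w|^4+|\nabla m|^4)$ yields a \emph{linear} Gronwall inequality. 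Therefore $\nabla v\in L^\infty((\tau,T_{\max});L^4(\Omega))$, and $4>N$.

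\textbf{Step~3 is circular as ordered.} Hölder continuity of $a,u,w$ does not give a $C^{1+\vartheta}$ bound for $v$ through the differentiated ODE. That ODE expresses $\nabla v$ through $\nabla a,\nabla u,\nabla w,\nabla m$, whose Hölder continuity you do not yet have. Conversely, Schauder theory for $a_t=\Delta a+\chi_E\nabla v\cdot\nabla a+F_1$ needs $\nabla v\in C^{\vartheta,\vartheta/2}$ first. You need an intermediate bootstrap:
\begin{enumerate}
\item Apply Neumann heat-semigroup estimates to $a_t-\Delta a=\chi_E\nabla v\cdot\nabla a+F_1$, and to the $u$- and $w$-equations.
\item Combine these with the ODE bound $\|\nabla v(t)\|_{L^r}\le C+C\int_\tau^t(\|\nabla a\|_{L^r}+\|\nabla u\|_{L^r}+\|\nabla w\|_{L^r}+\|\nabla m\|_{L^r})$.
\item Raise the integrability of all gradients jointly; each step gains a fixed amount because $4>N$. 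Continue until $\nabla a,\nabla u,\nabla w,\nabla v\in L^\infty$.
\item Then $W^{2,1}_q$-regularity with $q>N+2$ makes $\nabla a,\nabla u,\nabla w$, and hence $\nabla v$, Hölder continuous.
\end{enumerate}
Only after this do Schauder theory and the twice-differentiated ODE apply.

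A minor point: for $N=3$ and $q\in(2,3]$, the assumption $b_0,m_0\in W^{1,q}(\Omega)$ gives neither $L^\infty$ bounds for $b,m$ nor $\nabla m\in L^\infty$ near $t=0$. So all of the above should be done on $(\tau,T_{\max})$, using the smoothness for $t>0$ provided by Lemma~\ref{local_existence}.
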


\begin{remark}
 Lemma \ref{local_existence}, and consequently Lemma \ref{local_existence2} as well as the modified extensibility criterion stated in Lemma \ref{extensibility_imp}, remain valid for \eqref{model3w}, \eqref{model2}, and \eqref{model3} when $\alpha_1,\alpha_2>0$. Therefore, all subsequent results established in this section remain valid for  \eqref{model3w}, \eqref{model2}, and \eqref{model3}.
\end{remark}

\subsection{Some basic estimates}
We are now in a position to collect several elementary regularity properties of the solution components that will be crucial in the subsequent analysis.
\begin{lemma}
\label{lemmav_bound}
We have the estimate
\begin{align}
\label{v_es1}
0 \leq v(x,t) \leq 1 
\qquad \text{for all } x\in\Omega \text{ and } t\in(0,T_{\max}).
\end{align}
\end{lemma}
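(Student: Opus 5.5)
Since the $v$-equation in \eqref{sys1} is an ODE in time at each fixed $x\in\Omega$, we will argue pointwise. Fix $x\in\Omega$ and regard $t\mapsto v(x,t)$ as the solution of the linear ODE
\begin{align*}
\partial_t v = \rho_v c^F (1-v) - \big(\delta_1 m + \delta_2 c^E + \delta_3 c^M\big) v, \qquad v(x,0)=v_0(x),
\end{align*}
on $(0,T_{\max})$. Here $c^E=e^{\chi_E v}a$, $c^M=e^{\chi_M v}u$ and $c^F=e^{-\chi_F v}w$ are continuous on $\overline{\Omega}\times[0,T_{\max})$ by Lemma~\ref{local_existence2}, and they are nonnegative because $a,u,w$ are nonnegative by Lemma~\ref{local_existence}. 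The same lemma gives $m\ge 0$. Abbreviate $\alpha(t):=\rho_v c^F(x,t)\ge 0$ and $\beta(t):=\delta_1 m+\delta_2 c^E+\delta_3 c^M\ge 0$.

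The lower bound $v\ge 0$ is already contained in Lemma~\ref{local_existence}. For completeness, it also follows from the variation-of-constants formula
\begin{align*}
v(x,t)=v_0(x)e^{-\int_0^t(\alpha+\beta)}+\int_0^t \alpha(s)\,e^{-\int_s^t(\alpha+\beta)}\,ds ,
\end{align*}
because both terms are nonnegative when $v_0\ge 0$ and $\alpha\ge 0$.

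For the upper bound, set $z:=1-v$. Then $z$ satisfies
\begin{align*}
\partial_t z = -(\alpha+\beta) z + \beta ,
\end{align*}
and the same formula gives
\begin{align*}
z(t)=z(0)e^{-\int_0^t(\alpha+\beta)}+\int_0^t\beta(s)\,e^{-\int_s^t(\alpha+\beta)}\,ds .
\end{align*}
If $z(0)=1-v_0(x)\ge 0$, both terms are nonnegative, so $z\ge 0$ and hence $v\le 1$ for all $t$.

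The one real obstacle is that \eqref{ic_assum} only requires $v_0\ge 0$ and does not state $v_0\le 1$. The upper bound genuinely needs $v_0\le 1$. When $v_0(x)>1$, the formula shows only that $v$ decays toward the region $\{v\le 1\}$, with $v(x,t)\le \max\{1,v_0(x)\}$. We will therefore read \eqref{ic_assum} as including the biologically natural normalization $0\le v_0\le 1$ and state this explicitly in the proof. If that normalization is not intended, the argument yields the bound $0\le v\le \max\{1,\|v_0\|_{L^\infty(\Omega)}\}$ instead. All subsequent uses only need $v$ to be bounded, so that the weights $e^{\pm\chi v}$ in \eqref{cov} are bounded above and below.
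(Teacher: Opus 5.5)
Your argument is correct and is essentially the paper's proof. The paper simply invokes nonnegativity of the components, ``the assumption $0<v_0\le 1$'', and an ODE comparison for the sixth equation of \eqref{sys1}; as you rightly noticed, $v_0\le 1$ is not listed in \eqref{ic_assum}, so making it explicit is exactly what the paper tacitly does. One caution about your closing aside: later steps use more than boundedness of $v$ --- for instance \eqref{po88}, \eqref{new1} and \eqref{mz3} discard terms carrying the factor $1-v$ by sign, which needs $v\le 1$ --- so the fallback bound $v\le\max\{1,\|v_0\|_{L^\infty(\Omega)}\}$ would not carry the rest of the paper without modifying those estimates.
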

\begin{proof}
 \eqref{v_es1} follows directly from the non-negativity of the solution components, the assumption $0 < v_0 \leq 1$, and an ODE comparison argument applied to the sixth equation in \eqref{sys1}.
\end{proof}

The next lemma provides $L^1$ estimates for the first three solution components, together with space-time $L^2$ estimates for $c^E$ and $c^F$.
\begin{lemma}
    \label{lemma:l1bound}
    Let $T > 0$. Then there exists constants  $M_1(T) > 0$ and $M_2(T) > 0$ such that
    \begin{align}
    \label{L1ess}
    \int_\Omega c^i(\cdot,t)\leq M_1(T)
    \qquad \text{for all } i\in\{E,M,F\},\quad t\in(0,\widehat T),
\end{align}
    and
    \begin{align}
\label{sqc}
\int_0^{\widehat{T}}\int_\Omega (c^j)^2
\leq M_2(T)
\qquad \text{for all } j\in\{E,F\}.
\end{align}
    where $\widehat{T} := \min\{T, T_{\max}\}$. 
\end{lemma}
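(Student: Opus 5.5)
The plan is to integrate the first three equations of \eqref{model1} over $\Omega$ in the original variables. The zero-flux conditions \eqref{model2} make all diffusion and haptotaxis fluxes vanish, and Lemma~\ref{lemmav_bound} ($0\le v\le 1$) together with nonnegativity then lets us discard the favourable terms. The phenotypic transfer terms $\mp\mu_E\zeta_1(b)c^E\pm\mu_M\zeta_2(b)c^M$ cancel exactly when the $c^E$ and $c^M$ equations are added. We therefore consider $y_1(t):=\int_\Omega (c^E+c^M)$ and $y_2(t):=\int_\Omega c^F$.

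\textbf{Step 1: the cancer-cell mass.} Adding the first two equations gives
\begin{align*}
y_1'(t)+\rho_E\int_\Omega (c^E)^2 = \rho_E\int_\Omega c^E\big(1-c^M-c^F-v\big)\le \rho_E\int_\Omega c^E\le \rho_E\, y_1(t).
\end{align*}
Gronwall's inequality yields $y_1(t)\le y_1(0)e^{\rho_E T}$ on $(0,\widehat T)$, which bounds both $\int_\Omega c^E$ and $\int_\Omega c^M$. Integrating the same inequality in time gives $\rho_E\int_0^{\widehat T}\int_\Omega (c^E)^2\le y_1(0)+\rho_E T\sup_t y_1(t)$, which is \eqref{sqc} for $j=E$.

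\textbf{Step 2: the CAF mass.} Integrating the third equation, and using $c^E,c^M,v\ge0$, gives
\begin{align*}
y_2'(t)+\lambda_F y_2 = \rho_F\int_\Omega (1+c^E)c^F\big(1-c^E-c^M-c^F-v\big)\le \rho_F\int_\Omega (1+c^E)c^F(1-c^F).
\end{align*}
The factor $(1+c^E)$ is the only genuine difficulty, since it produces the cross term $\rho_F\int_\Omega c^Ec^F$. We handle it by using the negative terms directly: $-\rho_F\int_\Omega(1+c^E)c^F c^E$ is available, so that
\begin{align*}
(1+c^E)c^F(1-c^E-c^F)\le c^F-(c^F)^2+c^Ec^F-(c^E)^2c^F-c^E(c^F)^2 .
\end{align*}
Now $c^Ec^F-(c^E)^2c^F=c^Fc^E(1-c^E)\le \tfrac14 c^F$. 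Hence
\begin{align*}
y_2'+\lambda_Fy_2+\rho_F\int_\Omega (c^F)^2\le \tfrac54\rho_F\, y_2 ,
\end{align*}
with no coupling to $c^E$ at all. Gronwall gives the bound for $y_2$ on $(0,\widehat T)$, and time integration gives $\int_0^{\widehat T}\int_\Omega (c^F)^2\le M_2(T)$.

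\textbf{Constants and the main obstacle.} Taking $M_1(T)$ and $M_2(T)$ as the maxima of the constants obtained, which depend only on $T$, $|\Omega|$, the parameters and the initial masses, completes the proof. The only step requiring care is the cross term $c^Ec^F$ in the CAF equation. It is absorbed pointwise via $c^E(1-c^E)\le\tfrac14$, and we do not need to invoke the $L^2$ bound on $c^E$ from Step 1.
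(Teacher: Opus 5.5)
Your argument is correct and is essentially the paper's proof: integrate the cell equations using the zero-flux conditions, discard the nonpositive terms via nonnegativity and $0\le v\le 1$, apply Gronwall, and integrate the same inequality in time to get the space--time $L^2$ bounds. The only difference is that you run two ODIs, for $\int_\Omega(c^E+c^M)$ and $\int_\Omega c^F$, whereas the paper sums all three masses into one.

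One small remark on the CAF step. The cross term you absorb via $c^E(1-c^E)\le\frac14$ in fact cancels exactly, since $(1+c^E)c^F(1-c^E-c^F)=c^F\bigl(1-(c^E)^2\bigr)-(1+c^E)(c^F)^2\le c^F-(c^F)^2$, and this is what the paper's summed inequality implicitly uses. Also, your intermediate bound $\rho_F\int_\Omega(1+c^E)c^F(1-c^F)$ should keep the $-c^E$ in the bracket, since you use that term in the very next line.
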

\begin{proof}
   We integrate the first three equations of \eqref{model1} over $\Omega$ and add them. Owing to the non-negativity of the solution components, this yields the following differential inequality:
   \begin{align}
       \label{L11}
       \frac{d}{dt} \left\{\int_\Omega c^E + \int_\Omega c^M + \int_\Omega c^F\right\} + \rho_E \int_\Omega \left(c^E\right)^2 +  \rho_F \int_\Omega \left(c^F\right)^2 \leq \rho \left\{\int_\Omega c^E + \int_\Omega c^M + \int_\Omega c^F\right\}
   \end{align}
   for all $t \in (0, T_{\max})$, where $\rho = \max\{\rho_E, 1, \rho_F\}$. Dropping the non-negative quadratic terms from the left-hand side of \eqref{L11}, we infer that
\begin{align}
\label{L111}
\frac{d}{dt}\left\{\int_\Omega c^E +\int_\Omega c^M +\int_\Omega c^F\right\} \leq \rho \left\{\int_\Omega c^E +\int_\Omega c^M +\int_\Omega c^F\right\}
\end{align}
for all $t\in(0,T_{\max})$. An application of Gronwall's inequality to \eqref{L111} on $(0,\widehat T)$ therefore yields
\begin{align*}
\int_\Omega c^E(\cdot,t) +\int_\Omega c^M(\cdot,t) +\int_\Omega c^F(\cdot,t)
&\leq e^{\rho t} \left\{\int_\Omega c^E_0 +\int_\Omega c^M_0 +\int_\Omega c^F_0\right\}\\[5pt]
&\leq e^{\rho T}\left\{\int_\Omega c^E_0+\int_\Omega c^M_0+\int_\Omega c^F_0 \right\}
\end{align*}
for all $t\in(0,\widehat T)$. Since $c^E$, $c^M$ and $c^F$ are non-negative, each individual mass is bounded by the total mass. Hence, upon defining
\begin{align*}
M_1(T) := e^{\rho T} \left\{\int_\Omega c^E_0 +\int_\Omega c^M_0 +\int_\Omega c^F_0\right\},
\end{align*}
we directly obtain \eqref{L1ess}. Set, 
$$y_1(t):=\sum_{i\in\{E,M,F\}}\int_\Omega c^i(\cdot,t).$$
Integrating \eqref{L11}, over $(0,\widehat{T})$ in view of \eqref{L1ess}, we further obtain 
\begin{align*} 
\rho_E \int_0^{\widehat{T}}\int_\Omega (c^E)^2 + \rho_F \int_0^{\widehat{T}}\int_\Omega (c^F)^2 & \leq  y_1(0) +\rho\int_0^{\widehat{T}}y_1(t)\,dt\\[5pt]
&\leq  y_1(0)+\rho \widehat{T} M_1(T) \\[5pt]
& \leq  y_1(0)+\rho T M_1(T) 
\end{align*} 
Consequently,
\begin{align*}
\int_0^{\widehat{T}}\int_\Omega (c^E)^2 \leq \frac{y_1(0)+\rho T M_1(T)}{\rho_E}, \qquad
\int_0^{\widehat{T}}\int_\Omega (c^F)^2 \leq \frac{y_1(0)+\rho T M_1(T)}{\rho_F}.
\end{align*}
Hence, upon defining
\begin{align*}
M_2(T):= \max\left\{\frac{y_1(0)+\rho T M_1(T)}{\rho_E}, \frac{y_1(0)+\rho T M_1(T)}{\rho_F}\right\},
\end{align*}
we conclude \eqref{sqc}
\end{proof}

As a consequence of Lemma \ref{lemma:l1bound} and \eqref{v_es1} we have the following corollary. 
\begin{corollary}
\label{L1_z}
 Let $T > 0$. Then there exist constants $a^\ast(T),u^\ast(T), w^\ast(T) > 0$ such that
\begin{equation}
\label{zL1_bound}
\int_\Omega a(\cdot,t) \leq a^\ast(T), \quad 
\int_\Omega u(\cdot,t) \leq u^\ast(T) 
\quad \text{and} \quad 
\int_\Omega w(\cdot,t) \leq w^\ast(T) 
\quad \text{for all } t \in (0,\widehat{T}),
\end{equation}
where $\widehat{T} := \min\{T,T_{\max}\}$.
\end{corollary}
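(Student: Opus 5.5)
The plan is to transfer the mass bounds of Lemma~\ref{lemma:l1bound} through the change of variables \eqref{cov}, using only the pointwise bound $0\le v\le 1$ from Lemma~\ref{lemmav_bound}. No differential inequality is needed: the corollary is a direct pointwise comparison followed by integration.

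First, since $v\ge 0$ and $\chi_E,\chi_M>0$, we have $e^{-\chi_E v}\le 1$ and $e^{-\chi_M v}\le 1$ in $\Omega\times(0,T_{\max})$. Because $c^E, c^M \ge 0$, this gives
\begin{align*}
0\le a=e^{-\chi_E v}c^E\le c^E \qquad\text{and}\qquad 0\le u=e^{-\chi_M v}c^M\le c^M .
\end{align*}
Integrating over $\Omega$ and invoking \eqref{L1ess}, we get $\int_\Omega a(\cdot,t)\le M_1(T)$ and $\int_\Omega u(\cdot,t)\le M_1(T)$ for all $t\in(0,\widehat T)$. We may therefore take $a^\ast(T):=u^\ast(T):=M_1(T)$.

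Second, for $w$ the exponential has the opposite sign, so here we need the upper bound $v\le 1$. It gives $e^{\chi_F v}\le e^{\chi_F}$, hence $0\le w=e^{\chi_F v}c^F\le e^{\chi_F}c^F$. Integrating and using \eqref{L1ess} again yields $\int_\Omega w(\cdot,t)\le e^{\chi_F}M_1(T)$ on $(0,\widehat T)$, so we set $w^\ast(T):=e^{\chi_F}M_1(T)$.

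There is no real obstacle. The only point requiring care is that the bound for $w$ relies on the upper bound $v\le 1$, whereas the bounds for $a$ and $u$ use only the nonnegativity of $v$. Both facts are supplied by \eqref{v_es1}, and the nonnegativity of $a,u,w$ comes from Lemma~\ref{local_existence}.
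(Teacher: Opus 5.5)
Your proof is correct and is essentially the paper's argument. Both use $0\le v\le 1$ to get $e^{-\chi_E v},\,e^{-\chi_M v}\le 1$ and $e^{\chi_F v}\le e^{\chi_F}$, integrate, and apply \eqref{L1ess}, arriving at the same constants $a^\ast(T)=u^\ast(T)=M_1(T)$ and $w^\ast(T)=e^{\chi_F}M_1(T)$.
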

\begin{proof}
Using the definitions of $a$, $u$ and $w$ from \eqref{cov}, together with \eqref{v_es1}, we obtain
\begin{align*}
& \int_\Omega a =  \int_\Omega e^{-\chi_E v} c^E \leq \int_\Omega c^E \leq M_1(T),\\[5pt]
& \int_\Omega u 
= \int_\Omega e^{-\chi_M v}\, c^M 
\leq \int_\Omega c^M  \leq M_1(T) \qquad \text{and}  \\[5pt]
& \int_\Omega w 
= \int_\Omega e^{\chi_F v}\, c^F 
\leq e^{\chi_F} \int_\Omega c^F  \leq e^{\chi_F} M_1(T)
\end{align*}
for all $t\in(0, \widehat{T})$.  Hence, \eqref{zL1_bound} holds with 
\begin{align*}
    a^\ast(T) := M_1(T), \qquad
u^\ast(T):=M_1(T)
\qquad \text{and} \qquad
w^\ast(T) :=e^{\chi_F}M_1(T).
\end{align*}
\end{proof}
Next, we establish an $L^1$ estimate for the fifth solution component. 
\begin{lemma}
\label{lemma_mess}
Let $T>0$. Then there exists a constant $M_m(T)>0$ such that
\begin{align}
\label{mes1}
\int_\Omega m(\cdot,t)\leq M_m(T)
\qquad \text{for all }t\in(0,\widehat{T}),
\end{align}
where $\widehat{T} := \min\{T,T_{\max}\}$.
\end{lemma}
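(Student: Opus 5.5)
Integrate the fifth equation of \eqref{model1} over $\Omega$. The homogeneous Neumann condition $\partial_\nu m = 0$ from \eqref{model2} eliminates the diffusion term, and the nonnegativity of $m$, $c^E$, $c^F$ (Lemma~\ref{local_existence}) shows that $y(t):=\int_\Omega m(\cdot,t)$ satisfies
\begin{align*}
y'(t) + \lambda_m y(t) = \rho_m^F \int_\Omega c^F(\cdot,t) + \rho_m^E \int_\Omega c^E(\cdot,t) \qquad \text{for all } t\in(0,T_{\max}).
\end{align*}
Lemma~\ref{lemma:l1bound}, specifically \eqref{L1ess}, bounds the right-hand side by $(\rho_m^F+\rho_m^E)M_1(T)$ for $t\in(0,\widehat T)$.

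Solving this linear ODE with the variation-of-constants formula, or comparing with a constant supersolution, gives
\begin{align*}
y(t) \le y(0)e^{-\lambda_m t} + \frac{(\rho_m^F+\rho_m^E)M_1(T)}{\lambda_m}\bigl(1-e^{-\lambda_m t}\bigr) \le \int_\Omega m_0 + \frac{(\rho_m^F+\rho_m^E)M_1(T)}{\lambda_m}
\end{align*}
for all $t\in(0,\widehat T)$. Since $m_0\in W^{1,q}(\Omega)\subset L^1(\Omega)$ by \eqref{ic_assum}, the right-hand side is finite. We take it as $M_m(T)$. Note that $\lambda_m>0$ is not needed; a Gronwall bound with the factor $T$ would serve equally well.

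The only technical point is the regularity of $m$ near $t=0$. Lemma~\ref{local_existence} gives $m$ classical only on $(0,T_{\max})$ and $m\in C^0([0,T_{\max});W^{1,q}(\Omega))$. We therefore carry out the differentiation on $(\tau,t)$ for $\tau>0$ and then let $\tau\searrow 0$, using the continuity of $\tau\mapsto\int_\Omega m(\cdot,\tau)$ in $L^1$. No other obstacle is expected.
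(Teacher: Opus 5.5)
Your proposal is correct and follows the paper's argument. Both integrate the $m$-equation, bound the source by $(\rho_m^F+\rho_m^E)M_1(T)$ via \eqref{L1ess}, and conclude by ODE comparison; the paper records the slightly sharper constant $\max\{\int_\Omega m_0,\,(\rho_m^F+\rho_m^E)M_1(T)/\lambda_m\}$ rather than your sum, and your $\tau\searrow 0$ treatment of the regularity at $t=0$ is a harmless refinement the paper omits.
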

\begin{proof}
Integrating the fifth equation of \eqref{model1} over $\Omega$ and using \eqref{L1ess}, we obtain
\begin{align*}
\frac{d}{dt}\int_\Omega m+\lambda_m\int_\Omega m = \rho_m^F\int_\Omega c^F +\rho_m^E\int_\Omega c^E \leq (\rho_m^F+\rho_m^E)M_1(T)
\end{align*}
for all $t\in(0,T_{\max})$. An application of the comparison principle for ODEs yields
\begin{align*}
    \int_\Omega m(\cdot,t) \leq \max\left\{\int_\Omega m_0,\,\frac{(\rho_m^F+\rho_m^E)M_1(T)}{\lambda_m}\right\}=:M_m(T)
\end{align*}
for all $t \in (0, \widehat{T})$. This establishes \eqref{mes1}.
\end{proof}

We also have some additional estimates for $m$ presented in the following lemma. 
\begin{lemma}
  \label{lemma_mbound1w}
   There exist a positive constant $M_2^\ast(T)$ such that
  \begin{align}
   \label{lemma_mnab}
    \int_\Omega |\nabla m(\cdot, t)|^2 \leq M_2^\ast(T), \quad \text{for all}~ t \in (0, \widehat{T}),
  \end{align}
  and
  \begin{equation}
    \label{lemma_mdel}
    \int_{0}^{\widehat{T}} \int_\Omega |\Delta m|^2 \leq M_2^\ast(T)
  \end{equation}
  where $\widehat{T} := \min\{T,T_{\max}\}$.
\end{lemma}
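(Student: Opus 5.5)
Test the $m$-equation against $-\Delta m$. Since $\partial_\nu m=0$ on $\partial\Omega$, integration by parts gives $\int_\Omega \partial_t m\,(-\Delta m)=\frac12\frac{d}{dt}\int_\Omega|\nabla m|^2$ and $\int_\Omega m\,(-\Delta m)=\int_\Omega|\nabla m|^2$. Hence
\begin{align*}
\frac12\frac{d}{dt}\int_\Omega |\nabla m|^2+\int_\Omega|\Delta m|^2+\lambda_m\int_\Omega|\nabla m|^2
=-\int_\Omega\big(\rho_m^F c^F+\rho_m^E c^E\big)\Delta m
\end{align*}
for all $t\in(0,T_{\max})$. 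To make this computation rigorous near $t=0$, we use the regularity of $m$ from Lemma \ref{local_existence2}: it is $C^{2,1}$ for $t>0$ and lies in $C^0([0,T_{\max});W^{1,q}(\Omega))$ with $q>2$. We therefore integrate from some $t_0>0$ and let $t_0\searrow 0$, using $\nabla m(\cdot,t_0)\to\nabla m_0$ in $L^2(\Omega)$. This convergence holds because $W^{1,q}\hookrightarrow W^{1,2}$ on the bounded domain.

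Next apply Young's inequality to the right-hand side:
\begin{align*}
-\int_\Omega\big(\rho_m^F c^F+\rho_m^E c^E\big)\Delta m\le \frac12\int_\Omega|\Delta m|^2+(\rho_m^F)^2\int_\Omega (c^F)^2+(\rho_m^E)^2\int_\Omega (c^E)^2 .
\end{align*}
We then absorb $\frac12\int_\Omega|\Delta m|^2$ into the left-hand side and drop the nonnegative term $\lambda_m\int_\Omega|\nabla m|^2$. This yields
\begin{align*}
\frac{d}{dt}\int_\Omega|\nabla m|^2+\int_\Omega|\Delta m|^2\le 2(\rho_m^F)^2\int_\Omega (c^F)^2+2(\rho_m^E)^2\int_\Omega (c^E)^2 .
\end{align*}

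Integrating over $(0,t)$ for $t<\widehat T$, the right-hand side is controlled by the space-time bound \eqref{sqc} of Lemma \ref{lemma:l1bound}. This gives
\begin{align*}
\int_\Omega|\nabla m(\cdot,t)|^2+\int_0^t\int_\Omega|\Delta m|^2\le \int_\Omega|\nabla m_0|^2+2\big((\rho_m^F)^2+(\rho_m^E)^2\big)M_2(T).
\end{align*}
The first term on the right is finite since $m_0\in W^{1,q}(\Omega)$ with $q>2$. Defining $M_2^\ast(T)$ as the right-hand side, both \eqref{lemma_mnab} and \eqref{lemma_mdel} follow, the latter by letting $t\nearrow\widehat T$.

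The only delicate point is the behaviour at $t=0$, where $m$ is not yet known to be $C^{2,1}$. It is handled by the limiting argument $t_0\searrow0$ described above. No dimension-dependent tools are needed, so the argument works for $N\in\{2,3\}$. The key input is that the production terms of $m$ are controlled in $L^2(\Omega\times(0,\widehat T))$ by \eqref{sqc}, which holds for exactly the two cell populations that produce $m$.
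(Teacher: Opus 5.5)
Your proposal is correct and follows the paper's argument: you test the $m$-equation with $-\Delta m$, absorb half of $\int_\Omega|\Delta m|^2$ via Young's inequality, and control the source by the space-time bound \eqref{sqc} on $c^E$ and $c^F$. The paper instead feeds $y'+2\lambda_m y\le f$ into an ODE lemma of Stinner et al., while you drop the $\lambda_m$ term and integrate directly, which is simpler and suffices on the finite interval $(0,\widehat T)$; your limiting argument as $t_0\searrow 0$ also makes explicit a regularity point at $t=0$ that the paper passes over.
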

\begin{proof}
  We test the fifth equation of \eqref{model1} with $- \Delta m$ and apply Young's inequality to have
  \begin{align}
  \label{mnab1}
    \frac{1}{2} \frac{d}{dt} \int_\Omega |\nabla m|^2 + \frac{1}{2} \int_\Omega |\Delta m|^2 + \lambda_m \int_\Omega |\nabla m|^2 \leq (\rho_m^F)^2\int_\Omega (c^F)^2 + (\rho_m^E)^2\int_\Omega (c^E)^2
  \end{align}
  for all $t \in (0, T_{\max})$. From \eqref{mnab1} we observe that $y_2(t) = \int_\Omega |\nabla m(\cdot, t)|^2, t \in (0, T_{\max})$ satisfies the following ordinary differential inequality:
  \begin{align}
  \label{m233}
      y_2'(t) + 2 \lambda_m y_2(t) \leq f(t) := 2(\rho_m^F)^2\int_\Omega (c^F)^2 + 2(\rho_m^E)^2\int_\Omega (c^E)^2.
  \end{align}
 Thanks to Lemma \ref{lemma:l1bound} we have a constant $C^\ast > 0$ such that
  \begin{align*}
     \int_0^{\widehat{T}} f(s) \leq C^\ast \quad \text{for all} ~ t \in (0, \widehat{T}).
  \end{align*}
Consequently, an application of \cite[Lemma 3.4]{stinner2014global} to \eqref{m233} ensures that there exists a constant $M_2(T)^\ast>0$ such that
  \begin{equation*}
    \int_\Omega |\nabla m(\cdot, t)|^2 \leq  \max \left\{\int_\Omega |\nabla m_0|^2 + C^\ast, \; \frac{C^\ast}{ 2\lambda_m} + 2 C^\ast\right\} =: M_2^\ast(T) \quad \text{for all}~ t \in (0, \widehat{T}),
  \end{equation*}
  this yields \eqref{lemma_mnab}. \eqref{lemma_mdel} can then be directly obtained by an integration of \eqref{mnab1} in view of \eqref{lemma_mnab}.
\end{proof}
In the sequel, we will frequently use the notations
\begin{align*}
e^{\chi_E} := \Lambda_E, \qquad e^{\chi_M}:=\Lambda_M
\qquad \text{and} \qquad
e^{\chi_F}:=\Lambda_F.
\end{align*}


\section{Global classical solutions to \texorpdfstring{\eqref{sys1}}{the system} in a two-dimensional domain}
\label{sec3}
This section aims to establish global classical solutions to \eqref{model1}-\eqref{model3} in a two-dimensional spatial domain. To this end, we derive a Lyapunov-type functional involving the logarithmic entropy of $c^E$, $c^M$, and $c^F$. This functional plays a crucial role in establishing an $L^\infty$ estimate for these components via a modified Gagliardo-Nirenberg interpolation inequality \cite[Lemma A.5]{tao2014energy}. 

We begin by invoking the following lemma, previously employed in the analysis of oncolytic virotherapy models with positive haptotaxis \cite[Lemma 5.2]{tao2020global}, \cite[Lemma 2.1]{li2021boundedness}. The lemma may be viewed as a special case of \cite[Lemma A.5]{tao2014energy}.
\begin{lemma}\textnormal{(\cite[Lemma 5.2]{tao2020global}; \cite[Lemma 2.1]{li2021boundedness}).}
\label{Lemma_2EDLOg}
Let $\Omega \subset \mathbb{R}^2$ be a bounded domain with smooth boundary, and let $p \in (1,\infty)$ and $\varepsilon > 0$. Then there exists a constant $K(p,\varepsilon) > 0$ such that
\[
\|\varphi\|_{L^{\frac{2(p+1)}{p}}(\Omega)}^{\frac{2(p+1)}{p}}
\leq \varepsilon \|\nabla \varphi\|_{L^2(\Omega)}^2 \cdot \int_{\Omega} |\varphi|^{\frac{2}{p}} |\ln |\varphi||
+ K(p,\varepsilon)\left( \|\varphi\|_{L^{\frac{2}{p}}(\Omega)}^{\frac{2(p+1)}{p}} + 1 \right)
\]
for all $\varphi \in W^{1,2}(\Omega)$.
\end{lemma}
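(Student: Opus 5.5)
The plan is to reduce the statement to the $n=2$ Gagliardo--Nirenberg inequality applied to the function $\psi:=|\varphi|^{1/p}$. Set $q:=\frac{2(p+1)}{p}$. Then
\[
\|\varphi\|_{L^{q}(\Omega)}^{q}=\int_\Omega |\varphi|^{\frac{2(p+1)}{p}}=\int_\Omega \psi^{2(p+1)},\qquad \|\varphi\|_{L^{2/p}(\Omega)}^{2/p}=\int_\Omega\psi^2 ,
\]
and
\[
\int_\Omega|\varphi|^{\frac2p}|\ln|\varphi||=p\int_\Omega \psi^2|\ln\psi| .
\]
The inequality therefore amounts to an estimate of $\int_\Omega\psi^{2(p+1)}$ through a product of a Dirichlet-type energy and an $L\log L$-type entropy of $\psi^2$. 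This is exactly the structure of \cite[Lemma A.5]{tao2014energy}. So the primary route is to invoke that lemma with the appropriate exponents, checking that its hypotheses, a bounded smooth planar domain and $\varphi\in W^{1,2}(\Omega)$, are met.

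To verify the content rather than merely cite it, I would argue in three steps.

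\emph{Step 1 (interpolation).} For $\varphi\in W^{1,2}$ in two dimensions, apply the Gagliardo--Nirenberg inequality to $\varphi$ with $L^1$-type lower norms. A truncation separates the regions $\{|\varphi|\le e\}$ and $\{|\varphi|>e\}$. On the small-value set, $\int|\varphi|^q$ is bounded by a constant times $(\|\varphi\|_{L^{2/p}}^{q}+1)$, since $q>2/p$ and $|\varphi|$ is bounded there.

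\emph{Step 2 (large-value set).} On the large-value set, apply the two-dimensional inequality
\[
\|f\|_{L^3}^3\le C\|f\|_{W^{1,2}}^2\|f\|_{L^1},
\]
suitably rescaled to the exponent $q$, to $f=(|\varphi|-e)_+$. This yields
\[
\int|\varphi|^{q}\le C\|\nabla\varphi\|_2^2\int|\varphi|^{q-2}+\dots
\]
The key observation is that $q-2=\frac2p$. Hence
\[
\int_{\{|\varphi|>e\}}|\varphi|^{2/p}\le \int|\varphi|^{2/p}|\ln|\varphi|| ,
\]
because $\ln|\varphi|\ge 1$ on this set.

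\emph{Step 3 (producing the small $\varepsilon$).} Repeat the splitting at a large threshold $s_0=s_0(\varepsilon)$ in place of $e$. On $\{|\varphi|>s_0\}$ we have $|\varphi|^{2/p}\le \frac{1}{\ln s_0}|\varphi|^{2/p}|\ln|\varphi||$, which makes the coefficient $C/\ln s_0\le\varepsilon$. The remainder, living on $\{|\varphi|\le s_0\}$, is absorbed into $K(p,\varepsilon)\bigl(\|\varphi\|_{L^{2/p}}^{q}+1\bigr)$. The lower-order $L^2$ term in the $W^{1,2}$ norm is handled in the same way.

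The main obstacle is Step 2. The Gagliardo--Nirenberg estimate must be applied to the truncated function with the weight $|\varphi|^{q-2}$ rather than to $\varphi$ itself, and here $2/p<1$, so $L^{2/p}$ is not a norm and the standard inequality does not apply directly. I would try to circumvent this with the substitution $\psi=|\varphi|^{1/p}$ from the first paragraph, but gradients of $\psi$ blow up where $\varphi$ is small. This is precisely why the truncation at a large threshold (away from zero) is essential, and why, if these details resist, I would fall back on citing \cite[Lemma A.5]{tao2014energy} together with \cite[Lemma 5.2]{tao2020global}.
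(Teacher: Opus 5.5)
Your primary route, invoking \cite[Lemma A.5]{tao2014energy} or \cite[Lemma 5.2]{tao2020global}, is exactly what the paper does. It states the lemma with these references and gives no argument of its own. Your self-contained verification has the right architecture: truncate at a large level $s_0$ and gain the small factor $1/\ln s_0$ from the entropy. However, Step 2 does not work as written.

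With $q=\frac{2(p+1)}{p}$, the inequality $\|f\|_{L^3}^3\le C\|f\|_{W^{1,2}}^2\|f\|_{L^1}$ is exactly the case $p=2$ of what you need, and it cannot be ``rescaled''. Applying it to a power $f^\gamma$ produces a multiple of $\int_\Omega f^{2\gamma-2}|\nabla f|^2$, which is $\int_\Omega|\nabla f|^2$ only for $\gamma=1$. Similarly, Gagliardo--Nirenberg with an $L^1$ lower norm, as in your Step 1, gives $\|f\|_{L^q}^q\le C\|\nabla f\|_{L^2}^{q-1}\|f\|_{L^1}+C\|f\|_{L^1}^q$. Its gradient power $q-1=1+\frac2p$ equals $2$ only when $p=2$.

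The display at the end of your Step 2 is the right target. The tool that yields it is the two-dimensional Gagliardo--Nirenberg inequality with lower-order exponent $\frac2p$ and interpolation parameter $\frac{p}{p+1}$, both forced by scaling:
\[
\|f\|_{L^q(\Omega)}^q\le C\|\nabla f\|_{L^2(\Omega)}^2\int_\Omega |f|^{\frac2p}+C\|f\|_{L^{2/p}(\Omega)}^q\qquad\text{for all } f\in W^{1,2}(\Omega).
\]
For $1<p\le 2$ this is standard. For $p>2$ it still holds; note this is the only case in which $\frac2p<1$, so the remark in your last paragraph is not general. To prove it, combine the $L^1$ version with the H\"older interpolation $\|f\|_{L^1}\le\|f\|_{L^{2/p}}^{\theta}\|f\|_{L^q}^{1-\theta}$, where $\theta=\frac{p+2}{p^2}\in(0,1)$. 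Then absorb the resulting powers of $\|f\|_{L^q}$ by Young's inequality. Because this inequality involves only $\|\nabla f\|_{L^2}$, you also avoid the lower-order $L^2$ part of your $W^{1,2}$ norm. That term would not be handled by the $1/\ln s_0$ trick alone, contrary to what Step 3 claims.

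With this inequality, a single truncation closes the argument. No substitution $\psi=|\varphi|^{1/p}$ and no preliminary split at $e$ are needed. Put $f=(|\varphi|-s_0)_+$ with $s_0>1$. Then:
\begin{itemize}
\item $|\nabla f|\le|\nabla\varphi|$ a.e.;
\item $\|f\|_{L^{2/p}}\le\|\varphi\|_{L^{2/p}}$;
\item $f^{2/p}\le|\varphi|^{2/p}\le\frac{1}{\ln s_0}|\varphi|^{2/p}\ln|\varphi|$ on $\{|\varphi|>s_0\}$, while $f=0$ elsewhere.
\end{itemize}
Since $|\varphi|=f+\min\{|\varphi|,s_0\}$ and $\min\{|\varphi|,s_0\}^q\le s_0^{2}|\varphi|^{2/p}$, we get
\[
\int_\Omega|\varphi|^q\le\frac{2^{q-1}C}{\ln s_0}\|\nabla\varphi\|_{L^2(\Omega)}^2\int_\Omega|\varphi|^{\frac2p}\bigl|\ln|\varphi|\bigr|+2^{q-1}C\|\varphi\|_{L^{2/p}(\Omega)}^q+2^{q-1}s_0^2\int_\Omega|\varphi|^{\frac2p}.
\]
Choose $s_0$ with $2^{q-1}C\le\varepsilon\ln s_0$ and use $\int_\Omega|\varphi|^{2/p}\le\|\varphi\|_{L^{2/p}(\Omega)}^q+1$; this gives the lemma. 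The truncation serves only to produce the factor $1/\ln s_0$. It plays no role in avoiding small values of $\varphi$, contrary to the reason given in your last paragraph.
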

We begin the analysis by stating an important result concerning the fifth solution component.
\begin{lemma}
\label{lemma_mboundpq}
Let $T>0$ and $p>1$. Then there exists a constant $C(p,T)>0$ such that
\begin{align}
\label{mp1}
\int_\Omega m^p(\cdot,t)\leq C(p,T)
\qquad \text{for all }t\in(0,\widehat{T}),
\end{align}
where $\widehat{T}:=\min\{T,T_{\max}\}$.
\end{lemma}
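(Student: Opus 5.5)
The plan is to use the uniform-in-time $H^1$ bound for $m$ from Lemma~\ref{lemma_mbound1w} and the two-dimensional Sobolev embedding. Since $\Omega\subset\mathbb{R}^2$ is bounded with smooth boundary, $W^{1,2}(\Omega)\hookrightarrow L^p(\Omega)$ continuously for every $p\in[1,\infty)$. Hence it suffices to bound $\|m(\cdot,t)\|_{W^{1,2}(\Omega)}$ uniformly on $(0,\widehat T)$.

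\textbf{Step 1: the two pieces of the $W^{1,2}$ norm.} Lemma~\ref{lemma_mbound1w}, through \eqref{lemma_mnab}, already gives $\|\nabla m(\cdot,t)\|_{L^2(\Omega)}^2\le M_2^\ast(T)$ for all $t\in(0,\widehat T)$. Lemma~\ref{lemma_mess} gives $\|m(\cdot,t)\|_{L^1(\Omega)}\le M_m(T)$, using that $m$ is nonnegative.

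\textbf{Step 2: recovering the $L^2$ part.} I would combine these two bounds through a Poincar\'e--Wirtinger type inequality:
\[
\|m\|_{L^2(\Omega)}\le C_P\|\nabla m\|_{L^2(\Omega)}+|\Omega|^{-1/2}\|m\|_{L^1(\Omega)}.
\]
This yields a uniform $W^{1,2}$ bound on $(0,\widehat T)$.

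\textbf{Step 3: conclusion.} The embedding then gives, for every $p>1$,
\[
\|m(\cdot,t)\|_{L^p(\Omega)}\le C_S(p)\,\|m(\cdot,t)\|_{W^{1,2}(\Omega)}\le C_S(p)\bigl(C_P\sqrt{M_2^\ast(T)}+(1+C_P')M_m(T)\bigr).
\]
Raising this to the power $p$ produces $C(p,T)$.

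An equivalent route, if one prefers to avoid Poincar\'e, is the Gagliardo--Nirenberg inequality in 2D, $\|m\|_{L^p}\le C\|\nabla m\|_{L^2}^{1-1/p}\|m\|_{L^1}^{1/p}+C\|m\|_{L^1}$, which gives the same conclusion directly.

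I do not expect a genuine obstacle here. The only point to verify is that the constants in \eqref{lemma_mnab} and \eqref{mes1} are uniform on $(0,\widehat T)$, which the earlier lemmas already assert.

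If one instead wanted a self-contained semigroup argument, one could test the $m$-equation with $m^{p-1}$ and use Young's inequality with $\int_\Omega (c^E)^2$, $\int_\Omega (c^F)^2$ integrable in time. This is more cumbersome, since only space-time $L^2$ control of the sources is available. The embedding route avoids this entirely, so I would use it.
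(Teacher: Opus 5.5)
Your proposal is correct and takes the same approach as the paper. The paper's proof also combines the $L^1$ bound \eqref{mes1} with the gradient bound \eqref{lemma_mnab} and then invokes the two-dimensional embedding $W^{1,2}(\Omega)\hookrightarrow L^p(\Omega)$; you additionally spell out the Poincar\'e--Wirtinger step, which the paper leaves implicit, that turns these two bounds into a uniform $W^{1,2}$ bound.
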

\begin{proof}
  Since $W^{1, 2}(\Omega) \hookrightarrow L^p(\Omega)$ (note that $\Omega$ in this section is a two-dimensional domain),  \eqref{mp1} follows directly by combining the $L^1$ bound for $m$ established in \eqref{mes1} with \eqref{lemma_mnab}.
\end{proof}

Similar arguments to those employed for the fifth equation in \eqref{model1} for $m$, when applied to the fourth equation for $b$, yield the following estimates.
\begin{lemma}
  \label{lemma_mbound1b}
 Let $T>0$. Then, for any $p\geq 1$, there exist constants $C(p,T)>0$ and $C(T)>0$ such that
\begin{align*}
    \int_\Omega b^p(\cdot,t) \leq C(p,T)
    \qquad \text{for all } t\in(0,\widehat{T}),
\end{align*}
\begin{align*}
    \int_\Omega |\nabla b(\cdot,t)|^2 \leq C(T)
    \qquad \text{for all } t\in(0,\widehat{T}),
\end{align*}
and
\begin{align*}
    \int_0^{\widehat{T}}\int_\Omega |\Delta b|^2 \leq C(T),
\end{align*}
where $\widehat{T}:=\min\{T,T_{\max}\}$.
\end{lemma}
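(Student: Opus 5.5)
We follow the same route used for $m$ in Lemmas~\ref{lemma_mess}, \ref{lemma_mbound1w} and \ref{lemma_mboundpq}, applied to the fourth equation $\partial_t b=\Delta b+\rho_b c^F-\lambda_b b$ of \eqref{model1} with homogeneous Neumann data. The only source term is $\rho_b c^F$, which is controlled by Lemma~\ref{lemma:l1bound}.

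\emph{Step 1: $L^1$ bound.} We integrate the $b$-equation over $\Omega$ and use the zero-flux condition and \eqref{L1ess}. This gives
\begin{align*}
\frac{d}{dt}\int_\Omega b+\lambda_b\int_\Omega b=\rho_b\int_\Omega c^F\le \rho_b M_1(T)
\end{align*}
for all $t\in(0,T_{\max})$. ODE comparison then yields $\int_\Omega b(\cdot,t)\le\max\{\int_\Omega b_0,\rho_bM_1(T)/\lambda_b\}$ on $(0,\widehat T)$.

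\emph{Step 2: gradient and second-order bounds.} We test the $b$-equation with $-\Delta b$, integrate by parts using $\partial_\nu b=0$, and apply Young's inequality to $-\rho_b\int_\Omega c^F\Delta b$. This gives
\begin{align*}
\frac{1}{2}\frac{d}{dt}\int_\Omega|\nabla b|^2+\frac12\int_\Omega|\Delta b|^2+\lambda_b\int_\Omega|\nabla b|^2\le \frac{\rho_b^2}{2}\int_\Omega (c^F)^2 .
\end{align*}
The function $y(t):=\int_\Omega|\nabla b(\cdot,t)|^2$ therefore satisfies $y'+2\lambda_b y\le f(t):=\rho_b^2\int_\Omega(c^F)^2$, and by \eqref{sqc} we have $\int_0^{\widehat T}f\le \rho_b^2M_2(T)$. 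Applying \cite[Lemma 3.4]{stinner2014global} as in Lemma~\ref{lemma_mbound1w} bounds $y$ uniformly on $(0,\widehat T)$. Here we use $b_0\in W^{1,q}(\Omega)\subset W^{1,2}(\Omega)$ for some $q>2$, so that $y(0)<\infty$. Integrating the differential inequality over $(0,\widehat T)$ then bounds $\int_0^{\widehat T}\int_\Omega|\Delta b|^2$.

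\emph{Step 3: $L^p$ bounds.} Since $\Omega\subset\mathbb R^2$, the Sobolev embedding $W^{1,2}(\Omega)\hookrightarrow L^p(\Omega)$ holds for every finite $p\ge1$. Combined with the Poincar\'e-type estimate $\|b\|_{W^{1,2}}\le C(\|\nabla b\|_{L^2}+\|b\|_{L^1})$, Steps 1 and 2 yield the uniform bound on $\int_\Omega b^p$.

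None of these steps is a genuine obstacle. The only point requiring care is that the source term is controlled merely in the space-time sense $L^2(\Omega\times(0,\widehat T))$, not pointwise in time. This is why we use the integrated ODE lemma of \cite{stinner2014global} rather than a simple comparison argument. The second subtlety is that $b$ is only $C^{2,1}$ for $t>0$. We therefore perform the testing on $(\tau,\widehat T)$ and let $\tau\searrow0$, which is justified by $b\in C^0([0,T_{\max});W^{1,q}(\Omega))$.
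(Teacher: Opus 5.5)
Your proposal is correct and follows the paper's approach: the paper proves this lemma only by stating that the arguments for $m$ (Lemmas \ref{lemma_mess}, \ref{lemma_mbound1w} and \ref{lemma_mboundpq}) carry over to the $b$-equation, and you carry those steps out in detail, adding a justification of the limit $\tau\searrow 0$ that the paper leaves implicit. One small point: on the finite interval $(0,\widehat T)$ you do not actually need the lemma from \cite{stinner2014global}, since dropping $2\lambda_b y\ge 0$ and integrating $y'\le f$ already gives the bound, though using that lemma is also fine.
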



We next establish several auxiliary lemmas that are essential for the development of an entropy-type functional for \eqref{model1}.

\begin{lemma}
    \label{cwwe_log11}
    For every $\eta > 0$ the following holds true:
    \begin{align}
        \label{celog_main}
        \nonumber \frac{d}{dt} \int_\Omega c^E \ln c^E & + \int_\Omega \frac{\left|\nabla c^E\right|^2}{c^E}  \leq  \chi_E \int_\Omega \nabla c^E \cdot \nabla v + \frac{\rho_E}{2}\int_\Omega (c^E)^2  \\[5pt]
        & + \frac{ \eta \mu_M \zeta^\ast}{2}\int_\Omega (c^M)^2 + B_1 \int_\Omega c^E + \left(\frac{\rho_E}{e} +  \mu_M \zeta^\ast\right) \int_\Omega c^M + \frac{\rho_E}{e} \int_\Omega c^F + B_2
    \end{align}
   for all $t\in(0,T_{\max})$ where $B_1 := \frac{2}{e^2} \left(\frac{\mu_M \zeta^\ast}{\eta} + \rho_E\right) + \rho_E \left(1 + \frac{1}{e}\right)$ and $B_2 := \frac{1}{e}\left(\mu_E \zeta^\ast + \rho_E \right)|\Omega|$.
\end{lemma}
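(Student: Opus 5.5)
We test the first equation of \eqref{model1} with $\ln c^E + 1$ and integrate by parts, using the zero-flux condition \eqref{model2}. Since $\int_\Omega \partial_t c^E = \frac{d}{dt}\int_\Omega c^E$ is absorbed through the $+1$, we obtain
\begin{align*}
\frac{d}{dt}\int_\Omega c^E\ln c^E + \int_\Omega \frac{|\nabla c^E|^2}{c^E} = \chi_E\int_\Omega \nabla c^E\cdot\nabla v + \int_\Omega R\,(\ln c^E+1),
\end{align*}
where $R := -\mu_E\zeta_1(b)c^E + \mu_M\zeta_2(b)c^M + \rho_E c^E(1-c^E-c^M-c^F-v)$. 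The haptotactic term comes from $\chi_E\int_\Omega c^E\nabla v\cdot \frac{\nabla c^E}{c^E}$, and it already appears exactly as in \eqref{celog_main}. The remaining work is to estimate the reaction contributions term by term using \eqref{zeta}, the bound $0\le v\le 1$ from Lemma~\ref{lemmav_bound}, and the elementary facts $-s\ln s\le 1/e$ and $s\ln s\le s^2$ (or $\ln s\le s$) for $s>0$.

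\textbf{The EMT term.} Here
\begin{align*}
-\mu_E\zeta_1(b)c^E(\ln c^E+1)\le -\mu_E\zeta_1(b)\,c^E\ln c^E \le \frac{\mu_E\zeta^\ast}{e},
\end{align*}
because the term $-\mu_E\zeta_1 c^E$ is nonpositive and $-c^E\ln c^E\le \frac1e$ wherever $c^E<1$, while the integrand is nonpositive wherever $c^E\ge 1$. Integrating gives the $\mu_E\zeta^\ast|\Omega|/e$ part of $B_2$.

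\textbf{The logistic term.} We split $\rho_E c^E(1-c^E-c^M-c^F-v)(\ln c^E+1)$ into several pieces.
\begin{itemize}
\item $\rho_E c^E(\ln c^E+1)\le \rho_E(c^E)^2/2+\rho_E c^E+\dots$, using $\ln s\le s/2$-type bounds.
\item $-\rho_E (c^E)^2\ln c^E$ is bounded on $\{c^E<1\}$ by $\sup_{s<1}(-s^2\ln s)=\frac{1}{2e}$, and is nonpositive elsewhere.
\item $-\rho_E c^E c^i\ln c^E\le \frac{\rho_E}{e}c^i$ for $i\in\{M,F\}$; this produces the $\frac{\rho_E}{e}\int c^M$ and $\frac{\rho_E}{e}\int c^F$ terms.
\item The $-\rho_E c^E(c^E+c^M+c^F+v)$ parts are nonpositive and can be dropped.
\item $-\rho_E c^E v\ln c^E\le \rho_E/e$ contributes to $B_2$.
\end{itemize}
The precise bookkeeping is what produces the constants $\frac{\rho_E}{2}$, $\rho_E(1+\frac1e)$ and $\frac{2\rho_E}{e^2}$. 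For instance, $c\ln c\le \frac{2}{e^2}+\frac{c^2}{2}$ type estimates, or $\ln c\le \frac{2}{e}\sqrt c$, would be used here.

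\textbf{The MET term.} This is the main obstacle: $\mu_M\zeta_2(b)c^M(\ln c^E+1)$ couples $c^M$ with $\ln c^E$, and $c^M$ has no damping. We bound it by $\mu_M\zeta^\ast c^M$ plus $\mu_M\zeta^\ast c^M\ln c^E$ restricted to $\{c^E>1\}$. There we use $\ln s\le \frac{2}{e}\sqrt s$ and Young's inequality:
\begin{align*}
c^M\ln c^E\le \frac{2}{e}c^M\sqrt{c^E}\le \frac{\eta}{2}(c^M)^2+\frac{2}{e^2\eta}c^E.
\end{align*}
This yields the $\frac{\eta\mu_M\zeta^\ast}{2}\int(c^M)^2$ and $\frac{2\mu_M\zeta^\ast}{e^2\eta}\int c^E$ terms in $B_1$.

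Collecting all the pieces gives \eqref{celog_main}. The only delicate points are keeping the constants matching $B_1$ and $B_2$, and ensuring that no uncontrolled product survives; the other cross terms such as $c^Ec^M\ln c^E$ are handled by sign on $\{c^E\ge1\}$.
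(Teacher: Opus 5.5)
Your route is the paper's: you test with $\ln c^E+1$, keep $\chi_E\int_\Omega\nabla c^E\cdot\nabla v$, and estimate the reaction terms pointwise using \eqref{zeta}, $0\le v\le 1$ and $-s\ln s\le\frac1e$. For the MET term you apply $\ln s\le\frac2e\sqrt s$ and then Young. The paper applies Young first and then $(\ln s)^2\le\frac{4}{e^2}s$ on $\{s>1\}$. Both give the same $\frac{\eta\mu_M\zeta^\ast}{2}\int_\Omega(c^M)^2+\frac{2\mu_M\zeta^\ast}{e^2\eta}\int_\Omega c^E$. Your EMT, $c^Ec^M\ln c^E$, $c^Ec^F\ln c^E$ and $vc^E\ln c^E$ estimates are exactly the paper's.

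One step, however, does not produce the stated constants. You bound $-\rho_E(c^E)^2\ln c^E$ pointwise by $\frac{\rho_E}{2e}$, which adds $\frac{\rho_E}{2e}|\Omega|$ to the right-hand side. $B_2=\frac1e(\mu_E\zeta^\ast+\rho_E)|\Omega|$ has no room for this. Its $\frac{\mu_E\zeta^\ast}{e}|\Omega|$ is used by the EMT term. Its $\frac{\rho_E}{e}|\Omega|$ is used by $-\rho_E vc^E\ln c^E$, which really does need a constant, since $-s\ln s$ is not $O(s)$ as $s\to 0$. Nor can you trade the excess against the part of $B_1\int_\Omega c^E$ your bookkeeping leaves unused, because $\int_\Omega c^E$ may be arbitrarily small.

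The paper instead writes $-\rho_E(c^E)^2\ln c^E=\rho_E c^E(-c^E\ln c^E)\le\frac{\rho_E}{e}c^E$; this is exactly the $\frac{\rho_E}{e}$ inside $\rho_E(1+\frac1e)$. The same caution applies to your suggested $c\ln c\le\frac{2}{e^2}+\frac{c^2}{2}$, which would again add a constant. The version matching $B_1$ is $c\ln c\le\frac{2}{e}c^{3/2}\le\frac{c^2}{2}+\frac{2}{e^2}c$. Even simpler, $\rho_E c^E(1-c^E)\ln c^E\le 0$ pointwise, so these two terms can be dropped together.

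With this correction your argument gives \eqref{celog_main} with precisely $B_1$ and $B_2$. For the later use in Lemma~\ref{entropy_lemma_ablog} the discrepancy would be harmless anyway, since all such constants are absorbed into $C_1(T)$.
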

\begin{proof}
    Owing to the positivity of the solution components, we can directly obtain from the $c^E$ equation
    \begin{align}
        \label{cel12}
        \nonumber & \frac{d}{dt} \int_\Omega c^E \ln c^E  = \int_\Omega \partial_t c^E \ln c^E + \int_\Omega \partial_t c^E  \\[5pt]
        \nonumber & = - \int_\Omega \frac{\left|\nabla c^E\right|^2}{c^E} + \chi_E \int_\Omega \nabla c^E \cdot \nabla v  + \int_\Omega \left\{-\mu_{E}\,\zeta_1(b)\,c^{E} + \mu_{M}\,\zeta_2(b)\,c^{M}\right\}\ln c^E \\[5pt]
        & + \int_\Omega \left\{\rho_{E}\,c^{E}\big(1-c^{E}-c^{M}-c^{F}-v\big)\right\} \ln c^E + \int_\Omega \partial_t c^E 
    \end{align}
    for all $t \in (0, T_{\max})$. An application of Young's inequality, together with \eqref{zeta} and the inequality $(\ln s)^2 \leq \frac{4}{e^2}s$ for $s>0$, yields for any $\eta > 0$
    \begin{align}
        \label{ce_alg12}
        \nonumber \mu_M \int_\Omega \zeta_2(b) c^M \ln c^E \leq \mu_M \int_{\{c^E > 1\}} \zeta_2(b) c^M \ln c^E & \leq \mu_M \zeta^\ast \left\{\frac{\eta}{2} \int_\Omega (c^M)^2 + \frac{1}{2\eta} \int_{\{c^E > 1\}} (\ln c^E)^2\right\} \\[5pt]
        & \leq \frac{\mu_M \zeta^\ast}{2} \left\{\eta \int_\Omega (c^M)^2 + \frac{4}{\eta e^2} \int_\Omega c^E\right\}
    \end{align}
   for all $t \in (0, T_{\max})$.  Using the inequality $-s\ln s\le\frac{1}{e}$ for $s>0$, together with \eqref{zeta} and \eqref{v_es1}, we obtain
   \begin{align}
       \label{alh32}
   - \mu_E \int_\Omega \zeta_1(b) c^E \ln c^E  = \mu_E \int_\Omega \zeta_1(b) (- c^E \ln c^E) \leq \frac{\mu_E}{e} \int_\Omega \zeta_1(b)  \leq \frac{\mu_E}{e} \zeta^\ast  |\Omega|
   \end{align}
  for all $t \in (0, T_{\max})$. Using the logarithmic inequalities mentioned above together with \eqref{v_es1}, we obtain the following estimates:
  \begin{align}
      \label{alsg3}
      & \rho_E \int_\Omega c^E \ln c^E \leq \frac{\rho_E}{2} \left\{\int_\Omega (c^E)^2 + \int_{\{c^E > 1\}} (\ln c^E)^2\right\} \leq \frac{\rho_E}{2} \left\{\int_\Omega (c^E)^2 + \frac{4}{e^2} \int_\Omega c^E\right\}, \\[5pt]
      \label{alsg4}
      & - \rho_E \int_\Omega (c^E)^2 \ln c^E = \rho_E \int_\Omega c^E (- c^E \ln c^E) \leq \frac{\rho_E}{e} \int_\Omega c^E, \\[5pt]
      \label{alsg77}
      & - \rho_E \int_\Omega c^M c^E \ln c^E = \rho_E \int_\Omega c^M (-c^E \ln c^E) \leq \frac{\rho_E}{e} \int_\Omega c^M, \\[5pt]
      \label{alsg99}
      & -\rho_E \int_\Omega c^F c^E \ln c^E = \rho_E \int_\Omega c^F (-c^E \ln c^E) \leq \frac{\rho_E}{e} \int_\Omega c^F, \\[5pt]
      \label{alsg95}
      &  -\rho_E \int_\Omega v c^E \ln c^E = \rho_E \int_\Omega v (-c^E \ln c^E) \leq \frac{\rho_E}{e} |\Omega| 
  \end{align}
    for all $t \in (0, T_{\max})$. Next, we estimate $\int_\Omega \partial_t c^E$. Owing to the nonnegativity of the solution components and \eqref{zeta}, we have
    \begin{align}
    \label{reasc}
   \nonumber \int_\Omega \big\{-\mu_{E}\,\zeta_1(b)\,c^{E} + \mu_{M}\,\zeta_2(b)\,c^{M} & +\rho_{E}\,c^{E}\big(1-c^{E} -c^{M}-c^{F}-v\big)\big\} \\[5pt]
   & \leq \mu_M \zeta^\ast \int_\Omega c^M + \rho_E \int_\Omega c^E
    \end{align}
   for all $t \in (0, T_{\max})$.  Inserting \eqref{ce_alg12}-\eqref{reasc} into \eqref{cel12} results in \eqref{celog_main}.
\end{proof}
\begin{lemma}
    \label{cm_log11}
      The following holds true:
    \begin{align}
        \label{cmlog_main}
        \nonumber \frac{d}{dt} \int_\Omega c^M \ln c^M & + \int_\Omega \frac{\left|\nabla c^M\right|^2}{c^M}  \leq  \chi_M \int_\Omega \nabla c^M \cdot \nabla v + \frac{\mu_E \zeta^\ast}{2} \int_\Omega (c^E)^2 \\[5pt] 
        &  + \mu_E \zeta^\ast \int_\Omega c^E+ \frac{2\mu_E \zeta^\ast }{e^2} \int_\Omega c^M + \frac{\mu_M}{e} \zeta^\ast |\Omega|
    \end{align}
   for all $t\in(0,T_{\max})$.
\end{lemma}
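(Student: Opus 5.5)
We argue exactly as in the proof of Lemma~\ref{cwwe_log11}, now for the $c^M$ equation in \eqref{model1}. Since $c^M$ is positive, we compute $\frac{d}{dt}\int_\Omega c^M\ln c^M=\int_\Omega \partial_t c^M \ln c^M+\int_\Omega \partial_t c^M$. We insert the second equation of \eqref{model1} and integrate the diffusion and haptotaxis terms by parts, using the zero-flux condition \eqref{model2}. This gives
\begin{align*}
\frac{d}{dt}\int_\Omega c^M\ln c^M = -\int_\Omega \frac{|\nabla c^M|^2}{c^M} + \chi_M\int_\Omega \nabla c^M\cdot\nabla v + \int_\Omega\big\{\mu_E\zeta_1(b)c^E-\mu_M\zeta_2(b)c^M\big\}\big(\ln c^M+1\big).
\end{align*}
Here $\int_\Omega\partial_t c^M$ reduces to the reaction integral, because the flux terms integrate to zero. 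Since $c^M$ has no logistic term, only the two transition contributions need to be estimated.

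The first contribution is the gain term $\mu_E\int_\Omega\zeta_1(b)c^E\ln c^M$. We restrict it to $\{c^M>1\}$, since it is nonpositive elsewhere, and use $\zeta_1\le\zeta^\ast$ from \eqref{zeta}. Young's inequality with weight one then gives
\begin{align*}
\mu_E\zeta^\ast\Big\{\tfrac12\int_\Omega (c^E)^2+\tfrac12\int_{\{c^M>1\}}(\ln c^M)^2\Big\}.
\end{align*}
Applying $(\ln s)^2\le \frac{4}{e^2}s$ produces the terms $\frac{\mu_E\zeta^\ast}{2}\int_\Omega (c^E)^2$ and $\frac{2\mu_E\zeta^\ast}{e^2}\int_\Omega c^M$.

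The second contribution is the loss term $-\mu_M\int_\Omega\zeta_2(b)c^M\ln c^M$. Using $-s\ln s\le \frac1e$ together with $\zeta_2\le\zeta^\ast$, it is bounded by $\frac{\mu_M\zeta^\ast}{e}|\Omega|$.

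The remaining zeroth-order pieces come from $\int_\Omega\partial_t c^M$. We drop $-\mu_M\int_\Omega\zeta_2(b)c^M\le 0$ by nonnegativity and bound $\mu_E\int_\Omega \zeta_1(b)c^E\le \mu_E\zeta^\ast\int_\Omega c^E$.

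Collecting these estimates gives \eqref{cmlog_main}. There is no real obstacle here; the only point needing care is the sign restriction to $\{c^M>1\}$ before applying the logarithmic inequality. The cross-diffusion term $\chi_M\int_\Omega\nabla c^M\cdot\nabla v$ is deliberately left untouched, so that it can later be combined with the $v$-dynamics in the entropy functional.
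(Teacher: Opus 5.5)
Your proposal is correct and follows the paper's proof step for step. It uses the same splitting $\frac{d}{dt}\int_\Omega c^M\ln c^M=\int_\Omega\partial_t c^M\ln c^M+\int_\Omega\partial_t c^M$, the same Young-plus-$(\ln s)^2\le\frac{4}{e^2}s$ estimate on $\{c^M>1\}$ for the EMT gain term, the same bound $-s\ln s\le\frac1e$ for the MET loss term, and the same bound $\mu_E\zeta^\ast\int_\Omega c^E$ for the reaction integral. Your remark that the logarithmic inequality should only be applied on $\{c^M>1\}$, since it fails as $s\to 0$, is slightly more careful than the paper's phrasing ``for $s>0$''.
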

\begin{proof}
    Owing to the positivity of solution components, we can directly have from the $c^M$ equation
    \begin{align}
        \label{cml12}
        \nonumber & \frac{d}{dt} \int_\Omega c^M \ln c^M  = \int_\Omega \partial_t c^M \ln c^M + \int_\Omega \partial_t c^M   \\[5pt]
        & = - \int_\Omega \frac{\left|\nabla c^M\right|^2}{c^M} + \chi_M \int_\Omega \nabla c^M \cdot \nabla v  + \int_\Omega \left\{\mu_{E}\,\zeta_1(b)\,c^{E} - \mu_{M}\,\zeta_2(b)\,c^{M}\right\}\ln c^M + \int_\Omega \partial_t c^M
    \end{align}
    for all $t \in (0, T_{\max})$.  An application of Young's inequality, together with \eqref{zeta} and the inequality $(\ln s)^2 \leq \frac{4}{e^2}s$ for $s>0$, yields
    \begin{align}
        \label{cm_alg12}
        \nonumber \mu_E \int_\Omega \zeta_1(b) c^E \ln c^M \leq \mu_E \int_{\{c^M > 1\}} \zeta_1(b) c^E \ln c^M & \leq \mu_E \zeta^\ast \left\{\frac{1}{2} \int_\Omega (c^E)^2 + \frac{1}{2} \int_{\{c^M > 1\}} (\ln c^M)^2\right\} \\[5pt]
        & \leq \frac{\mu_E \zeta^\ast}{2} \left\{ \int_\Omega (c^E)^2 + \frac{4}{e^2} \int_\Omega c^M\right\} 
    \end{align}
   for all $t \in (0, T_{\max})$.  Using the inequality $-s\ln s\le\frac{1}{e}$ for $s>0$, together with \eqref{zeta} and \eqref{v_es1}, we obtain
   \begin{align}
       \label{mlh32}
   - \mu_M \int_\Omega \zeta_2(b) c^M \ln c^M  = \mu_M \int_\Omega \zeta_2(b) (- c^M \ln c^M) \leq \frac{\mu_M}{e} \int_\Omega \zeta_2(b)  \leq \frac{\mu_M}{e} \zeta^\ast  |\Omega|,
   \end{align}
  for all $t \in (0, T_{\max})$. Next, we estimate $\int_\Omega \partial_t c^M$. Owing to the nonnegativity of the solution components and \eqref{zeta}, we have
    \begin{align}
    \label{remsc}
   \int_\Omega \left\{\mu_{E}\,\zeta_1(b)\,c^{E} - \mu_{M}\,\zeta_2(b)\,c^{M}\right\}\leq \mu_E \zeta^\ast \int_\Omega c^E
    \end{align}
   for all $t \in (0, T_{\max})$.  Inserting \eqref{cm_alg12}-\eqref{remsc} into \eqref{cml12} results in \eqref{cmlog_main}. 
\end{proof}
\begin{lemma}
    \label{wwloge}
    For every $\eta > 0$ the following holds true:
    \begin{align}
        \label{cwlog_main}
        \nonumber \frac{d}{dt} \int_\Omega c^F \ln c^F & + \int_\Omega \frac{\left|\nabla c^F\right|^2}{c^F} + \chi_F \int_\Omega \nabla c^F \cdot \nabla v \leq R_1 \int_\Omega (c^E)^2 + \frac{\eta\rho_F}{2e}\int_\Omega(c^M)^2\\[5pt]
        & + R_2 \int_\Omega (c^F)^2 + \frac{\rho_F}{e}\int_\Omega c^E +\frac{\rho_F}{e}\int_\Omega c^M + R_3 \int_\Omega c^F +\frac{\rho_F+\lambda_F}{e}|\Omega|
    \end{align}
   for all $t\in(0,T_{\max})$ where $R_1 := \frac{\rho_F}{2}(1 + \frac{3}{e} + \frac{1}{\eta e})$, $R_2 := \rho_F + \frac{\rho_F}{2e}$, $R_3 := \rho_F+\frac{\rho_F}{e}+\frac{2\rho_F}{e^2}$.
\end{lemma}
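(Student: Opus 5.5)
The estimate comes from the same computation as in Lemmas~\ref{cwwe_log11} and \ref{cm_log11}, now applied to the $c^F$ equation. Using the positivity of $c^F$, we differentiate $\int_\Omega c^F\ln c^F$ and substitute the third equation of \eqref{model1}. Integrating by parts with the boundary condition $\partial_\nu c^F + \chi_F c^F\partial_\nu v=0$ from \eqref{model2} gives
\begin{align*}
\frac{d}{dt}\int_\Omega c^F\ln c^F = -\int_\Omega \frac{|\nabla c^F|^2}{c^F} - \chi_F\int_\Omega \nabla c^F\cdot\nabla v + \int_\Omega G\,(\ln c^F+1),
\end{align*}
where $G:=\rho_F(1+c^E)c^F(1-c^E-c^M-c^F-v)-\lambda_F c^F$. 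The haptotactic term now has the opposite sign, which is why it appears on the left-hand side of \eqref{cwlog_main}. All remaining work goes into bounding $\int_\Omega G\ln c^F$ and $\int_\Omega G$.

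We expand $G$ into monomials and treat them according to their sign after multiplication by $\ln c^F$.

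\emph{Negative-sign terms.} Each term of the form $-\rho_F(\cdots)c^F\ln c^F$ is handled with $-s\ln s\le 1/e$ together with $0\le v\le1$ from \eqref{v_es1}:
\begin{itemize}
\item $-\rho_F c^E c^F\ln c^F \le \frac{\rho_F}{e}c^E$, and similarly $-\rho_F c^M c^F\ln c^F\le\frac{\rho_F}{e}c^M$ and $-\rho_F (c^F)^2\ln c^F\le \frac{\rho_F}{e}c^F$;
\item $-\rho_F v c^F\ln c^F\le \frac{\rho_F}{e}$ and $-\lambda_F c^F\ln c^F\le\frac{\lambda_F}{e}$, which together produce the constant $\frac{\rho_F+\lambda_F}{e}|\Omega|$;
\item the quadratic-in-$c^E$ terms $-\rho_F (c^E)^2c^F\ln c^F\le\frac{\rho_F}{e}(c^E)^2$, and likewise $c^Ec^M$, $c^E c^F$ and $c^E v$ times $-c^F\ln c^F$, bounded by $\frac{\rho_F}{e}c^Ec^M$, $\frac{\rho_F}{e}c^Ec^F$ and $\frac{\rho_F}{e}c^E$.
\end{itemize}
The mixed products are then split by Young's inequality:
\begin{itemize}
\item $c^Ec^M\le \frac{1}{2\eta}(c^E)^2+\frac{\eta}{2}(c^M)^2$, which is the source of the $\eta$-dependent parts of $R_1$ and of the coefficient $\frac{\eta\rho_F}{2e}$ of $(c^M)^2$;
\item $c^Ec^F\le\frac12(c^E)^2+\frac12(c^F)^2$, which contributes to $R_1$ and gives the $\frac{\rho_F}{2e}$ in $R_2$.
\end{itemize}

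\emph{Positive-sign terms.} The remaining terms $\rho_F c^F\ln c^F$ and $\rho_F c^Ec^F\ln c^F$ can be positive. On $\{c^F\le 1\}$ they are nonpositive and can be dropped. On $\{c^F>1\}$ we use $\ln s\le s$ to get
\begin{itemize}
\item $\rho_F c^F\ln c^F\le \rho_F (c^F)^2$;
\item $\rho_F c^Ec^F\ln c^F \le \rho_F c^E (c^F)^2$, which is cubic and therefore not directly admissible.
\end{itemize}

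The step I expect to be the main obstacle is this last one: producing only quadratic terms from $\rho_F c^E c^F\ln c^F$. The cubic growth must be absorbed by the negative cubic part $-\rho_F c^E (c^F)^2$ of $G$, which comes from the zeroth-order contribution $\int_\Omega G$. The natural attempt is to combine the two pieces pointwise,
\begin{align*}
\rho_F c^E c^F(\ln c^F+1) - \rho_F c^E (c^F)^2 = \rho_F c^E c^F(\ln c^F + 1 - c^F)\le 0,
\end{align*}
using $\ln s\le s-1$. In the same way the term $\rho_F c^F\ln c^F$ is paired with $-\rho_F(c^F)^2$ coming from the $+1$ factor, if needed. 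With this pairing the cubic terms disappear.

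\emph{Zeroth-order part.} For the rest of $\int_\Omega G$ we drop all nonpositive terms. What remains is
\begin{align*}
\int_\Omega \rho_F(1+c^E)c^F \le \rho_F\int_\Omega c^F+\frac{\rho_F}{2}\int_\Omega\big((c^E)^2+(c^F)^2\big).
\end{align*}
This contributes to $R_1$, $R_2$ and $R_3$. Terms such as $\frac{2\rho_F}{e^2}\int_\Omega c^F$, coming from $(\ln s)^2\le\frac{4}{e^2}s$, enter if a Young split of the form $c^E\ln c^F$ is used.

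Finally, we collect all coefficients. Since the statement only requires the inequality with the displayed constants, any slack can be absorbed by enlarging terms, and with the coefficients collected above we arrive at \eqref{cwlog_main}.
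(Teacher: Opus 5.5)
\textbf{There is a genuine gap: your expansion of $G$ misses a cancellation, and as a result your coefficients do not give \eqref{cwlog_main}.} Your overall plan matches the paper's: the entropy identity, $-s\ln s\le 1/e$ on the negative monomials, and Young's inequality on the mixed products. The problem is in the algebra. Multiplying out,
\[
(1+c^E)(1-c^E-c^M-c^F-v)=1-c^M-c^F-v-(c^E)^2-c^Ec^M-c^Ec^F-c^Ev ,
\]
so the linear terms $\pm c^E$ cancel. Consequently:
\begin{itemize}
\item $G$ has exactly one positive monomial, $\rho_F c^F$, and eight negative ones. There is no term $\rho_F c^Ec^F\ln c^F$ at all.
\item Your ``main obstacle'' and the pairing via $\ln s\le s-1$ are artifacts of not simplifying.
\item You keep both $-\rho_F c^Ec^F\ln c^F$ and $-\rho_F c^Ec^Fv\ln c^F$ and bound each by $\frac{\rho_F}{e}c^E$. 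Your collected coefficient of $\int_\Omega c^E$ is therefore $\frac{2\rho_F}{e}$, whereas \eqref{cwlog_main} has $\frac{\rho_F}{e}$.
\end{itemize}
Your closing claim that slack can be absorbed by enlarging terms cannot fix this. The displayed constants are fixed, and enlarging never reduces a coefficient that is already too large.

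There is a similar problem with $(c^F)^2$. Suppose you use your listed bound $\rho_F c^F\ln c^F\le\rho_F(c^F)^2$ and also keep the $\frac{\rho_F}{2}\int_\Omega (c^F)^2$ from $\int_\Omega G$. Then the coefficient of $\int_\Omega (c^F)^2$ is $\frac{3\rho_F}{2}+\frac{\rho_F}{2e}$, which exceeds $R_2$. As written, you prove an inequality of the same shape. That would suffice for Lemma~\ref{entropy_lemma_ablog}, where the constants are immaterial, but it is not the inequality stated.

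\textbf{The fix is exactly the paper's argument.} After the cancellation, estimate the single positive term by
\[
\rho_F c^F\ln c^F\le\frac{\rho_F}{2}(c^F)^2+\frac{2\rho_F}{e^2}c^F ,
\]
using Young's inequality and $(\ln s)^2\le\frac{4}{e^2}s$ on $\{s>1\}$. So the $\frac{2\rho_F}{e^2}$ comes from splitting $c^F\ln c^F$ itself, not $c^E\ln c^F$. Estimate the eight negative terms as you did. For the zeroth-order part, use
\[
\int_\Omega G\le\rho_F\int_\Omega(1+c^E)c^F\le\rho_F\int_\Omega c^F+\frac{\rho_F}{2}\int_\Omega\big((c^E)^2+(c^F)^2\big).
\]
Collecting gives exactly the stated constants:
\begin{itemize}
\item $R_1=\frac{\rho_F}{2}+\frac{3\rho_F}{2e}+\frac{\rho_F}{2e\eta}$;
\item $R_2=\rho_F+\frac{\rho_F}{2e}$;
\item $R_3=\rho_F+\frac{\rho_F}{e}+\frac{2\rho_F}{e^2}$;
\item a single $\frac{\rho_F}{e}\int_\Omega c^E$, coming only from the $c^Ec^Fv$ term.
\end{itemize}
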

\begin{proof}
    Owing to the positivity of the solution components, we directly obtain from the $c^F$ equation
    \begin{align}
        \label{cwl12}
        \nonumber & \frac{d}{dt} \int_\Omega c^F \ln c^F = \int_\Omega \partial_t c^F \ln c^F + \int_\Omega \partial_t c^F   \\[5pt]
        \nonumber & = - \int_\Omega \frac{\left|\nabla c^F\right|^2}{c^F} - \chi_F \int_\Omega \nabla c^F \cdot \nabla v  + \int_\Omega \partial_t c^F   \\[5pt]
        & + \int_\Omega \left\{\rho_{F}\,(1+c^{E})\,c^{F}\big(1-c^{E}-c^{M}-c^{F}-v\big) -\lambda_{F}\,c^{F}\right\} \ln c^F 
    \end{align}
    for all $t \in (0, T_{\max})$.  Using Young's inequality and the logarithmic inequalities mentioned before, together with \eqref{v_es1}, results in the following
    \begin{align}
        \label{ff1}
        & \rho_F \int_\Omega c^F \ln c^F \leq  \frac{\rho_F}{2} \left\{\int_\Omega (c^F)^2 + \frac{4}{e^2} \int_\Omega c^F\right\}, \\[5pt]
        & - \rho_F \int_\Omega c^F c^M \ln c^F = \rho_F \int_\Omega c^M \left(-c^F \ln c^F\right) \leq \frac{\rho_F}{e} \int_\Omega c^M, \\[5pt]
        & - \rho_F \int_\Omega (c^F)^2 \ln c^F =  \rho_F \int_\Omega c^F \left(- c^F \ln c^F\right) \leq \frac{\rho_F}{e} \int_\Omega c^F, \\[5pt]
        & - \rho_F \int_\Omega v c^F \ln c^F = \rho_F \int_\Omega v \left(- c^F \ln c^F\right) \leq \frac{\rho_F}{e} |\Omega|, \\[5pt]
       & - \rho_F \int_\Omega c^E c^F c^M \ln c^F = \rho_F \int_\Omega (c^E c^M ) \left(- c^F \ln c^F\right) \leq \frac{\rho_F}{2e} \left\{\frac{1}{\eta}\int_\Omega (c^E)^2 + \eta \int_\Omega (c^M)^2\right\}, \\[5pt]
        & - \rho_F \int_\Omega (c^E)^2 c^F \ln c^F = \rho_F \int_\Omega (c^E)^2 \left(- c^F \ln c^F\right) \leq \frac{\rho_F}{e} \int_\Omega (c^E)^2,  \\[5pt]
        & - \rho_F \int_\Omega c^E (c^F)^2 \ln c^F = \rho_F \int_\Omega \left(c^E c^F \right) \left(- c^F \ln c^F\right) \leq \frac{\rho_F}{2e} \left\{\int_\Omega (c^E)^2 + \int_\Omega (c^F)^2\right\}, \\[5pt]
        & - \rho_F \int_\Omega c^E c^F v \ln c^F = \rho_F \int_\Omega c^E v \left(- c^F \ln c^F\right) \leq \frac{\rho_F}{e} \int_\Omega c^E, \\[5pt]
         \label{ff2}
        & - \lambda_F \int_\Omega c^F \ln c^F = \lambda_F \int_\Omega \left(- c^F \ln c^F\right) \leq \frac{\lambda_F}{e} |\Omega|
    \end{align}
      for all $t \in (0, T_{\max})$. Next, we estimate $\int_\Omega \partial_t c^F$. Owing to the nonnegativity of the solution components and Young's inequality, we have
    \begin{align}
    \label{refsc}
     \nonumber \int_\Omega \left\{\rho_{F}\,(1+c^{E})\,c^{F}\big(1-c^{E}-c^{M}-c^{F}-v\big) -\lambda_{F}\,c^{F}\right\} &\leq \rho_F \int_\Omega (1 + c^E)c^F \\[5pt]
    &\leq \rho_F \int_\Omega c^F + \frac{\rho_F}{2} \left\{\int_\Omega (c^E)^2 + \int_\Omega (c^F)^2 \right\}
    \end{align}
   for all $t \in (0, T_{\max})$.  Inserting \eqref{ff1}-\eqref{refsc} into \eqref{cwl12} results in \eqref{cwlog_main}. 
\end{proof}
\begin{lemma}
    \label{mloge}
   The following holds true:
    \begin{align}
        \label{mlog_main}
         \nonumber \frac{d}{dt} \int_\Omega m \ln m & + \int_\Omega \frac{\left|\nabla m\right|^2}{m} \leq \frac{\rho_m^F}{2} \int_\Omega (c^F)^2 +  \frac{\rho_m^E}{2} \int_\Omega (c^E)^2  \\[5pt]
         & + \rho_{m}^{F} \int_\Omega c^F + \rho_{m}^{E} \int_\Omega c^E + \frac{2}{e^2}(\rho_{m}^{F} + \rho_{m}^{E}) \int_\Omega m + \frac{\lambda_m}{e}|\Omega|
    \end{align}
   for all $t\in(0,T_{\max})$.
\end{lemma}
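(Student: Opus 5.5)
The plan mirrors Lemmas \ref{cwwe_log11}--\ref{wwloge}, applied to the fifth equation of \eqref{model1}. Since $m$ is nonnegative (Lemma \ref{local_existence}) and, by the strong maximum principle, even positive for $t>0$, I will differentiate the entropy directly:
\begin{align*}
\frac{d}{dt}\int_\Omega m\ln m=\int_\Omega \partial_t m\,\ln m+\int_\Omega \partial_t m .
\end{align*}
Inserting the $m$-equation and integrating by parts with $\partial_\nu m=0$, the diffusion part yields $-\int_\Omega \frac{|\nabla m|^2}{m}$. The $\int_\Omega\Delta m$ contribution vanishes. If one is worried about $m$ vanishing at $t$ close to $0$, the standard remedy is to work with $m+\varepsilon$ and let $\varepsilon\searrow0$.

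Next I handle the reaction terms one at a time. For the decay term, the inequality $-s\ln s\le \frac1e$ gives
\begin{align*}
-\lambda_m\int_\Omega m\ln m\le \frac{\lambda_m}{e}|\Omega| .
\end{align*}
For the production terms I split off the region where $\ln m\le 0$, where the integrand is nonpositive. On $\{m>1\}$ I apply Young's inequality together with $(\ln s)^2\le \frac{4}{e^2}s$, which gives
\begin{align*}
\rho_m^F\int_\Omega c^F\ln m\le \frac{\rho_m^F}{2}\int_\Omega (c^F)^2+\frac{2\rho_m^F}{e^2}\int_\Omega m .
\end{align*}
The same argument with $c^E$ and $\rho_m^E$ in place of $c^F$ and $\rho_m^F$ gives the analogous bound for the ECC production term.

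Finally, the term $\int_\Omega \partial_t m$ equals $\rho_m^F\int_\Omega c^F+\rho_m^E\int_\Omega c^E-\lambda_m\int_\Omega m$. Dropping the nonpositive last part leaves $\rho_m^F\int_\Omega c^F+\rho_m^E\int_\Omega c^E$. Adding all contributions produces exactly \eqref{mlog_main}, with coefficient $\frac{2}{e^2}(\rho_m^F+\rho_m^E)$ on $\int_\Omega m$. There is no real obstacle here; the only care needed is the sign splitting in the production terms, so that $(\ln m)^2$ is only ever used where $m>1$, and the justification of the entropy differentiation near $m=0$.
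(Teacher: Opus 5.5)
Your proposal is correct and follows the paper's proof essentially line by line. Both use the same entropy identity, the bound $-s\ln s\le\frac1e$ for the decay term, Young's inequality with $(\ln s)^2\le\frac{4}{e^2}s$ for the two production terms, and the step of dropping $-\lambda_m\int_\Omega m$ from $\int_\Omega\partial_t m$. Your explicit restriction to $\{m>1\}$ before invoking $(\ln s)^2\le\frac{4}{e^2}s$ is what makes that step valid, since the inequality fails for small $s>0$; the paper's proof of this lemma leaves that restriction implicit.
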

\begin{proof}
    Owing to the positivity of the solution components, we can directly obtain from the $m$ equation
    \begin{align}
        \label{ml12}
        \nonumber & \frac{d}{dt} \int_\Omega m \ln m  = \int_\Omega \partial_t m \ln m + \int_\Omega \partial_t m \\[5pt]
        & = - \int_\Omega \frac{\left|\nabla m\right|^2}{m} + \int_\Omega \left\{\rho_{m}^{F}\,c^{F}+\rho_{m}^{E}\,c^{E}-\lambda_{m}\,m\right\} \ln m + \int_\Omega \partial_t m 
    \end{align}
    for all $t \in (0, T_{\max})$.  Using Young's inequality and the logarithmic inequalities mentioned before results in 
    \begin{align}
        \label{maa1}
        & \rho_m^F \int_\Omega c^F \ln m \leq \frac{\rho_m^F}{2} \left\{ \int_\Omega (c^F)^2 + \frac{4}{e^2} \int_\Omega m\right\}, \\[5pt]
        & \rho_m^E\int_\Omega  c^E \ln m \leq \frac{\rho_m^E}{2} \left\{ \int_\Omega (c^E)^2 + \frac{4}{e^2} \int_\Omega m\right\}, \\[5pt]
         \label{maa2}
        & \lambda_m \int_\Omega (-m \ln m) \leq \frac{\lambda_m}{e}|\Omega|
    \end{align}
    for all $t \in (0, T_{\max})$.  Next, we estimate $\int_\Omega \partial_t m$. Owing to the nonnegativity of the solution components, we have
    \begin{align}
        \label{maa34}
        \int_\Omega \left\{\rho_{m}^{F}\,c^{F}+\rho_{m}^{E}\,c^{E}-\lambda_{m}\,m\right\} \leq \rho_{m}^{F} \int_\Omega c^F + \rho_{m}^{E} \int_\Omega c^E
    \end{align}
    for all $t \in (0, T_{\max})$. Inserting \eqref{maa1}-\eqref{maa34} into \eqref{ml12} results in \eqref{mlog_main}. 
\end{proof}
To control the terms arising from the haptotactic interactions, we study the evolution of two gradient-type functionals associated with $v$, namely $\frac{1}{2}\int_\Omega \frac{|\nabla v|^2}{v}$ and $\int_\Omega |\nabla v|^2$. The corresponding results are stated in the following lemmas.
\begin{lemma}
    \label{vloge}
   The following holds true:
    \begin{align}
        \label{v_main}
         \nonumber  & \frac{1}{2} \frac{d}{dt} \int_\Omega \frac{|\nabla v|^2}{v} + \rho_v \int_\Omega \nabla c^F \cdot \nabla v + \delta_2 \int_\Omega \nabla c^E \cdot \nabla v + \delta_3 \int_\Omega \nabla c^M \cdot \nabla v + \frac{\rho_v}{2} \int_\Omega c^F \frac{|\nabla v|^2}{v} \\[5pt]
         & +  \frac{\delta_2}{2} \int_\Omega c^E \frac{|\nabla v|^2}{v}  + \frac{\delta_3}{2} \int_\Omega c^M \frac{|\nabla v|^2}{v} \leq \frac{\delta_1}{2} \int_\Omega \frac{|\nabla m|^2}{m} + \frac{\rho_v}{2} \int_\Omega \frac{\left|\nabla c^F\right|^2}{c^F}
    \end{align}
   for all $t\in(0,T_{\max})$.
\end{lemma}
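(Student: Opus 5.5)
The plan is a direct computation of the time derivative of $\frac12\int_\Omega \frac{|\nabla v|^2}{v}$, using only the sixth equation of \eqref{model1}, followed by two pointwise Young inequalities. First I justify the manipulations. By Lemma \ref{local_existence2}, $v\in C^{2,1}(\overline\Omega\times[0,T_{\max}))$. Since $v_0>0$ (as used in Lemma \ref{lemmav_bound}), the ODE $\partial_t v\ge -(\delta_1 m+\delta_2 c^E+\delta_3 c^M+\rho_v c^F)\,v$, whose coefficient is bounded on $\overline\Omega\times[0,T]$ for every $T<T_{\max}$, gives $v>0$ there. Likewise $m>0$ and $c^F>0$ for $t>0$ by the strong maximum principle; if positivity of these fails somewhere, I would replace $m$ and $c^F$ by $m+\varepsilon$ and $c^F+\varepsilon$ in the Young steps below and let $\varepsilon\searrow0$. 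Hence
\begin{align*}
\frac12\frac{d}{dt}\int_\Omega\frac{|\nabla v|^2}{v}=\int_\Omega\frac{\nabla v\cdot\nabla v_t}{v}-\frac12\int_\Omega\frac{|\nabla v|^2}{v^2}\,v_t .
\end{align*}

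Next I split $v_t=\rho_v c^F-\rho_v c^F v-\delta_1 mv-\delta_2 c^E v-\delta_3 c^M v$ and treat each piece separately. For any summand of the form $-\delta\,\phi v$ with $\phi\in\{m,c^E,c^M,c^F\}$, the product rule $\nabla(\phi v)=v\nabla\phi+\phi\nabla v$ gives the contribution
\begin{align*}
-\delta\int_\Omega\nabla\phi\cdot\nabla v-\delta\int_\Omega\phi\frac{|\nabla v|^2}{v}+\frac{\delta}{2}\int_\Omega\phi\frac{|\nabla v|^2}{v}
=-\delta\int_\Omega\nabla\phi\cdot\nabla v-\frac{\delta}{2}\int_\Omega\phi\frac{|\nabla v|^2}{v}.
\end{align*}
With $\phi=c^E,c^M,c^F$ and $\delta=\delta_2,\delta_3,\rho_v$, these contributions produce exactly the six terms on the left of \eqref{v_main}. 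With $\phi=m$ and $\delta=\delta_1$, the cross term is bounded by Young's inequality as $-\delta_1\nabla m\cdot\nabla v\le\frac{\delta_1}{2}\frac{|\nabla m|^2}{m}+\frac{\delta_1}{2}m\frac{|\nabla v|^2}{v}$. Its last part cancels the accompanying negative term, which leaves $\frac{\delta_1}{2}\int_\Omega\frac{|\nabla m|^2}{m}$ on the right.

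The remaining production piece $\rho_v c^F$ contributes
\begin{align*}
\rho_v\int_\Omega\frac{\nabla c^F\cdot\nabla v}{v}-\frac{\rho_v}{2}\int_\Omega c^F\frac{|\nabla v|^2}{v^2}.
\end{align*}
I bound the first term pointwise by $\frac12\frac{|\nabla c^F|^2}{c^F}+\frac12 c^F\frac{|\nabla v|^2}{v^2}$, so the singular $v^{-2}$ terms cancel exactly and only $\frac{\rho_v}{2}\int_\Omega\frac{|\nabla c^F|^2}{c^F}$ remains. Collecting all contributions yields \eqref{v_main} for all $t\in(0,T_{\max})$.

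The only delicate point is this last step: the weight $v^{-2}$ must be absorbed exactly, since no other term controls it. The Young splitting has to be chosen with coefficient exactly $\frac12$, so that it matches the $-\frac12$ coming from differentiating $1/v$. Apart from that, and the positivity justification needed to divide by $v$, $m$ and $c^F$, the argument is bookkeeping.
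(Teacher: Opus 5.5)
Your proposal is correct and follows the paper's proof essentially step for step: you differentiate $\frac12\int_\Omega |\nabla v|^2/v$, expand $v_t$, and apply the same two Young inequalities, with coefficient exactly $\frac12$ on the $c^F$ cross term so that the $c^F|\nabla v|^2/v^2$ terms cancel. One small omission: Young's inequality on the $m$ cross term actually yields $\frac{\delta_1}{2}\int_\Omega v\frac{|\nabla m|^2}{m}+\frac{\delta_1}{2}\int_\Omega m\frac{|\nabla v|^2}{v}$, and dropping the factor $v$ in the first term needs $v\le 1$ from Lemma \ref{lemmav_bound}, which you should cite. Your positivity argument and $\varepsilon$-regularization are a careful addition that the paper leaves implicit.
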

\begin{proof}
    We can directly calculate from the $v$ equation
    \begin{align}
        \label{v_mai}
        \nonumber & \frac{1}{2} \frac{d}{dt} \int_\Omega \frac{|\nabla v|^2}{v} = \int_\Omega \frac{1}{v} \nabla v \cdot \nabla v_t - \frac{1}{2} \int_\Omega \frac{|\nabla v|^2}{v^2}v_t \\[5pt]
        \nonumber & =  \int_\Omega \frac{1}{v} \nabla v \cdot \nabla (\rho_{v}c^{F} - \rho_{v}c^{F}v - \delta_1 mv - \delta_2 c^E v - \delta_3 c^M v) \\[5pt]
        \nonumber & - \frac{1}{2} \int_\Omega \frac{|\nabla v|^2}{v^2} \left(\rho_{v}c^{F} - \rho_{v}c^{F}v - \delta_1 mv - \delta_2 c^E v - \delta_3 c^M v\right) \\[5pt]
        \nonumber & = \rho_v \int_\Omega \frac{1}{v} \nabla v \cdot \nabla c^F - \rho_v \int_\Omega \nabla c^F \cdot \nabla v - \rho_v \int_\Omega c^F \frac{|\nabla v|^2}{v} - \delta_1 \int_\Omega \nabla m \cdot \nabla v - \delta_1 \int_\Omega m \frac{|\nabla v|^2}{v} \\[5pt]
        \nonumber & - \delta_2 \int_\Omega \nabla c^E \cdot \nabla v - \delta_2 \int_\Omega c^E \frac{|\nabla v|^2}{v} - \delta_3 \int_\Omega \nabla c^M \cdot \nabla v - \delta_3 \int_\Omega c^M \frac{|\nabla v|^2}{v} - \frac{\rho_v}{2} \int_\Omega c^F \frac{|\nabla v|^2}{v^2}       \\[5pt]
        & + \frac{\rho_v}{2} \int_\Omega c^F \frac{|\nabla v|^2}{v} + \frac{\delta_1}{2} \int_\Omega m \frac{|\nabla v|^2}{v}  + \frac{\delta_2}{2} \int_\Omega c^E \frac{|\nabla v|^2}{v}  + \frac{\delta_3}{2} \int_\Omega c^M \frac{|\nabla v|^2}{v}
    \end{align}
    for all $t \in (0, T_{\max})$. By means of Young's inequality and \eqref{v_es1} we have
    \begin{align}
        \label{vw2}
        &\rho_v \int_\Omega \frac{1}{v} \nabla v \cdot \nabla c^F \leq \frac{\rho_v}{2} \int_\Omega c^F \frac{|\nabla v|^2}{v^2} + \frac{\rho_v}{2} \int_\Omega \frac{\left|\nabla c^F\right|^2}{c^F} \qquad \text{and} \\[5pt]
         \label{vw3}
        & -\delta_1 \int_\Omega \nabla m \cdot \nabla v \leq \frac{\delta_1}{2} \int_\Omega v \frac{|\nabla m|^2}{m} + \frac{\delta_1}{2} \int_\Omega m \frac{|\nabla v|^2}{v} \leq  \frac{\delta_1}{2} \int_\Omega \frac{|\nabla m|^2}{m} + \frac{\delta_1}{2} \int_\Omega m \frac{|\nabla v|^2}{v}
    \end{align}
     for all $t \in (0, T_{\max})$. Inserting \eqref{vw2} and \eqref{vw3} into \eqref{v_mai} results in \eqref{v_main}.
\end{proof}

\begin{lemma}
    \label{vloge2}
   The following holds true:
    \begin{align}
        \label{v_main2}
         \frac{d}{dt} \int_\Omega |\nabla v|^2 \leq 2 \rho_v \int_\Omega \nabla v \cdot \nabla c^F + \frac{\rho_v}{2} \int_\Omega \frac{\left|\nabla c^F\right|^2}{c^F} & + \frac{\delta_1}{2} \int_\Omega \frac{\left|\nabla m\right|^2}{m}+ \frac{\delta_2}{2} \int_\Omega \frac{\left|\nabla c^E\right|^2}{c^E} + \frac{\delta_3}{2} \int_\Omega \frac{\left|\nabla c^M\right|^2}{c^M}
    \end{align}
   for all $t\in(0,T_{\max})$.
\end{lemma}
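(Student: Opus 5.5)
The plan is a direct computation from the sixth equation of \eqref{model1}, followed by one weighted Young inequality for each cross term. Every term containing $|\nabla v|^2$ with a positive weight should then cancel against a matching dissipative term, because $v\le 1$ by \eqref{v_es1}.

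\textbf{Step 1 (differentiate).} Since $v\in C^{2,1}$ by Lemma~\ref{local_existence2}, we can differentiate under the integral:
\[
\frac{d}{dt}\int_\Omega|\nabla v|^2=2\int_\Omega\nabla v\cdot\nabla v_t .
\]
Taking the gradient of $v_t=\rho_v c^F(1-v)-\delta_1 mv-\delta_2c^Ev-\delta_3c^Mv$ gives
\[
\nabla v_t=\rho_v(1-v)\nabla c^F-\rho_v c^F\nabla v-\delta_1 v\nabla m-\delta_1 m\nabla v-\delta_2 v\nabla c^E-\delta_2c^E\nabla v-\delta_3 v\nabla c^M-\delta_3 c^M\nabla v .
\]
Substituting, we obtain the following.
\begin{itemize}
\item The desired term $2\rho_v\int_\Omega\nabla v\cdot\nabla c^F$.
\item Four cross terms: $-2\rho_v\int_\Omega v\,\nabla v\cdot\nabla c^F$ and $-2\delta_i\int_\Omega v\,\nabla v\cdot\nabla z_i$, where $z_1=m$, $z_2=c^E$, $z_3=c^M$.
\item Four nonpositive terms: $-2\rho_v\int_\Omega c^F|\nabla v|^2$ and $-2\delta_i\int_\Omega z_i|\nabla v|^2$.
\end{itemize}

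\textbf{Step 2 (Young's inequality).} For each cross term we use the pointwise bound, valid wherever $z>0$,
\[
-2\kappa\, v\,\nabla v\cdot\nabla z\le 2\kappa\, z\,v^2|\nabla v|^2+\frac{\kappa}{2}\,\frac{|\nabla z|^2}{z},
\]
applied with $(\kappa,z)=(\rho_v,c^F)$ and $(\kappa,z)=(\delta_i,z_i)$.

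\textbf{Step 3 (absorption).} By \eqref{v_es1}, $v^2\le 1$, so $2\kappa\int_\Omega z v^2|\nabla v|^2\le 2\kappa\int_\Omega z|\nabla v|^2$. This is exactly cancelled by the corresponding nonpositive term from Step 1. What remains is precisely the right-hand side of \eqref{v_main2}: $2\rho_v\int_\Omega\nabla v\cdot\nabla c^F$ plus the four terms $\frac{\kappa}{2}\int_\Omega|\nabla z|^2/z$.

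\textbf{Main obstacle.} The only delicate point is that the quotients $|\nabla z|^2/z$ require $z>0$. If some component only satisfies $z\ge 0$, I would handle it in one of two ways.
\begin{itemize}
\item Invoke the strong maximum principle. This gives strict positivity of $c^E,c^M,c^F$ for $t>0$ whenever the initial datum is nontrivial, and likewise of $m$ for $t>0$, since $m$ is driven by the source $\rho_m^Fc^F+\rho_m^Ec^E$.
\item Alternatively, run the Young step with $z+\varepsilon$ in place of $z$ and let $\varepsilon\searrow0$, with the convention that $|\nabla z|^2/z=0$ where $z=0$. This is consistent with the formulation of Lemma~\ref{vloge}.
\end{itemize}
The remaining algebra is routine bookkeeping of constants.
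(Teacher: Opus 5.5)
Your proposal is correct and follows the paper's proof essentially verbatim. You expand $2\int_\Omega \nabla v\cdot\nabla v_t$, apply Young's inequality to each cross term $-2\kappa\int_\Omega v\,\nabla v\cdot\nabla z$ to get $\frac{\kappa}{2}\int_\Omega \frac{|\nabla z|^2}{z}+2\kappa\int_\Omega z|\nabla v|^2$, and cancel the latter against the nonpositive terms $-2\kappa\int_\Omega z|\nabla v|^2$. The only differences are minor: the paper uses $v\le 1$ before Young rather than after, and your remark on strict positivity of $z$ makes explicit a point the paper leaves implicit.
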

\begin{proof}
    We can directly calculate from the $v$ equation
    \begin{align}
        \label{v_mai2}
        \nonumber & \frac{d}{dt} \int_\Omega |\nabla v|^2 = 2 \int_\Omega \nabla v \cdot \nabla (\rho_{v}c^{F} - \rho_{v}c^{F}v - \delta_1 mv - \delta_2 c^E v - \delta_3 c^M v)\\[5pt] 
        \nonumber & = 2 \rho_v \int_\Omega \nabla v \cdot \nabla c^F - 2 \rho_v \int_\Omega v \nabla c^F \cdot \nabla v - 2 \rho_v \int_\Omega c^F |\nabla v|^2 - 2 \delta_1 \int_\Omega v \nabla m \cdot \nabla v - 2 \delta_1 \int_\Omega m |\nabla v|^2 \\[5pt]
        &  - 2 \delta_2 \int_\Omega v \nabla c^E \cdot \nabla v - 2 \delta_2 \int_\Omega c^E |\nabla v|^2  - 2 \delta_3 \int_\Omega v \nabla c^M \cdot \nabla v - 2 \delta_3 \int_\Omega c^M |\nabla v|^2  
    \end{align}
    for all $t \in (0, T_{\max})$. By means of Young's inequality and \eqref{v_es1} we have the following estimates
    \begin{align}
    \label{v432}
     & - 2  \rho_v \int_\Omega v \nabla c^F \cdot \nabla v \leq 2\rho_v \int_\Omega \left|\nabla c^F \cdot \nabla v\right| \leq \frac{\rho_v}{2} \int_\Omega \frac{\left|\nabla c^F\right|^2}{c^F} + 2 \rho_v  \int_\Omega c^F |\nabla v|^2, \\[5pt]
     & - 2  \delta_1 \int_\Omega v \nabla m \cdot \nabla v \leq 2 \delta_1 \int_\Omega \left|\nabla m \cdot \nabla v\right| \leq \frac{\delta_1}{2} \int_\Omega \frac{\left|\nabla m\right|^2}{m} + 2 \delta_1  \int_\Omega m |\nabla v|^2, \\[5pt]
     & -2  \delta_2 \int_\Omega v \nabla c^E \cdot \nabla v \leq 2 \delta_2 \int_\Omega \left|\nabla c^E \cdot \nabla v\right| \leq \frac{\delta_2}{2} \int_\Omega \frac{\left|\nabla c^E\right|^2}{c^E} + 2 \delta_2  \int_\Omega c^E |\nabla v|^2 \quad \text{and} \\[5pt]
     \label{b323}
     & -2  \delta_3 \int_\Omega v \nabla c^M \cdot \nabla v \leq 2 \delta_3 \int_\Omega \left|\nabla c^M \cdot \nabla v\right| \leq \frac{\delta_3}{2} \int_\Omega \frac{\left|\nabla c^M\right|^2}{c^M} + 2 \delta_3  \int_\Omega c^M |\nabla v|^2
    \end{align}
for all $t \in (0, T_{\max})$. Inserting \eqref{v432}-\eqref{b323} into \eqref{v_mai2} results in \eqref{v_main2}. 
\end{proof}
With these lemmas at hand, we now derive $L\log L$ estimates for $a$, $u$, and $w$, which will play a crucial role in establishing the corresponding $L^p$ estimates for arbitrary $p>1$.
\begin{lemma}
\label{entropy_lemma_ablog}
Let $T>0$. Suppose that
\begin{align}
\label{asd99}
\chi_E<2, \qquad \chi_M<2 \qquad \text{and} \qquad \chi_F<1.
\end{align}
Then there exists a constants $\widehat{C}_E(T), \widehat{C}_M(T), \widehat{C}_F(T) >0$ such that
\begin{align}
\label{cEln_bound}
&\int_\Omega a(\cdot,t)\left|\ln a(\cdot,t)\right|
\leq \widehat{C}_E(T),
\\[5pt]
\label{cMln_bound}
&\int_\Omega u(\cdot,t)\left|\ln u(\cdot,t)\right|
\leq \widehat{C}_M(T) \qquad \text{and}
\\[5pt]
\label{cFln_bound}
&\int_\Omega w(\cdot,t)\left|\ln w(\cdot,t)\right|
\leq \widehat{C}_F(T)
\end{align}
for all $t\in(0,\widehat{T})$, where
$\widehat{T}:=\min\{T,T_{\max}\}$.
\end{lemma}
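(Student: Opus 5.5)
The plan is to build a Lyapunov-type functional from Lemmas~\ref{cwwe_log11}--\ref{vloge2}, choosing the weights so that every haptotactic cross term $\int_\Omega \nabla c^i\cdot\nabla v$ cancels. Once the cross terms are gone, the right-hand side contains only quantities controlled by Lemma~\ref{lemma:l1bound}, together with a small multiple of $\int_\Omega (c^M)^2$ that we absorb into the dissipation. Concretely, for constants $\beta,A,K>0$ still to be fixed, set
\begin{align*}
\mathcal{F}(t):=\frac{\delta_2}{\chi_E}\int_\Omega c^E\ln c^E+\frac{\delta_3}{\chi_M}\int_\Omega c^M\ln c^M+A\int_\Omega c^F\ln c^F+K\int_\Omega m\ln m+\frac12\int_\Omega\frac{|\nabla v|^2}{v}+\beta\int_\Omega|\nabla v|^2 .
\end{align*}

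\textbf{Cancelling the cross terms.} Adding the weighted inequalities \eqref{celog_main}, \eqref{cmlog_main}, \eqref{cwlog_main}, \eqref{mlog_main}, \eqref{v_main} and $\beta\cdot$\eqref{v_main2}, the terms $\int_\Omega\nabla c^E\cdot\nabla v$ and $\int_\Omega\nabla c^M\cdot\nabla v$ cancel exactly; this is why the weights $\delta_2/\chi_E$ and $\delta_3/\chi_M$ are chosen. The coefficient of $\int_\Omega\nabla c^F\cdot\nabla v$ on the left becomes $A\chi_F+\rho_v-2\beta\rho_v$. With $\beta=1$ and $A=\rho_v/\chi_F$ this vanishes.

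\textbf{Positivity of the dissipation.} The remaining dissipation coefficients are:
\begin{itemize}
\item for $\int_\Omega|\nabla c^E|^2/c^E$: $\delta_2/\chi_E-\delta_2/2$, which is positive iff $\chi_E<2$;
\item for $\int_\Omega|\nabla c^M|^2/c^M$: $\delta_3/\chi_M-\delta_3/2$, which is positive iff $\chi_M<2$;
\item for $\int_\Omega|\nabla c^F|^2/c^F$: $\rho_v/\chi_F-\rho_v/2-\rho_v/2$, which is positive iff $\chi_F<1$;
\item for $\int_\Omega|\nabla m|^2/m$: $K-\delta_1$, which is positive once we choose $K>\delta_1$.
\end{itemize}
This is exactly where \eqref{asd99} enters. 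The nonnegative terms $\int_\Omega c^i|\nabla v|^2/v$ from \eqref{v_main} are simply dropped.

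\textbf{The obstacle: the term in $c^M$.} This is the main difficulty, since $c^M$ has no logistic damping and hence no space-time $L^2$ bound. The terms $\int_\Omega (c^E)^2$ and $\int_\Omega (c^F)^2$ on the right are integrable over $(0,\widehat T)$ by \eqref{sqc}, and the linear terms are bounded by \eqref{L1ess} and \eqref{mes1}. The remaining term is $\int_\Omega (c^M)^2$, which appears with a prefactor proportional to $\eta$. Since $\Omega\subset\mathbb{R}^2$, the Gagliardo--Nirenberg inequality applied to $\sqrt{c^M}$ gives
\begin{align*}
\int_\Omega (c^M)^2=\|\sqrt{c^M}\|_{L^4}^4\le C\|\nabla\sqrt{c^M}\|_{L^2}^2\|\sqrt{c^M}\|_{L^2}^2+C\|\sqrt{c^M}\|_{L^2}^4\le C M_1(T)\int_\Omega\frac{|\nabla c^M|^2}{c^M}+C(T),
\end{align*}
where we used \eqref{L1ess} for the mass of $c^M$. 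Choosing $\eta$ small, depending on $T$, the $c^M$ term is absorbed by the positive $c^M$-dissipation. Alternatively, Lemma~\ref{Lemma_2EDLOg} could be used here.

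\textbf{Conclusion.} We obtain $\mathcal{F}'(t)\le h(t)$ with $\int_0^{\widehat T}h\le C(T)$, hence $\mathcal F(t)\le \mathcal F(0)+C(T)$ on $(0,\widehat T)$. Since $-s\ln s\le 1/e$ and the $v$-gradient terms are nonnegative, this yields uniform bounds on $\int_\Omega c^i\ln c^i$, and therefore on $\int_\Omega c^i|\ln c^i|\le\int_\Omega c^i\ln c^i+\frac{2}{e}|\Omega|$. Finally we pass to $a,u,w$ via \eqref{cov} and \eqref{v_es1}. For example,
\begin{align*}
a\ln a=e^{-\chi_E v}c^E(\ln c^E-\chi_E v),
\end{align*}
so $\int_\Omega a|\ln a|\le\int_\Omega c^E|\ln c^E|+\chi_E\int_\Omega c^E$. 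The analogous bounds for $u$ and $w$ carry the extra factor $\Lambda_F$ in the case of $w$. Together these give \eqref{cEln_bound}--\eqref{cFln_bound}.
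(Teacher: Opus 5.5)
Your proposal is correct and is essentially the paper's own proof. It uses the same weighted entropy functional (the paper takes $\beta=1$, $A=\rho_v/\chi_F$ and $K=\delta_1$, so the $m$-dissipation cancels exactly and is not needed), the same cancellation of all terms $\int_\Omega\nabla c^i\cdot\nabla v$ with dissipation coefficients positive precisely under \eqref{asd99}, and the same two-dimensional Gagliardo--Nirenberg absorption of $\eta\int_\Omega (c^M)^2$ with $\eta=\eta(T)$ small.

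Integrating directly in time is a harmless simplification of the paper's route, which instead adds the nonnegative $v$-gradient terms to the right-hand side and applies Gronwall. Your transfer to $a,u,w$ through $|\ln|$ is in fact more careful than the paper's \eqref{alpo2} and \eqref{wloh3}, which overlook the sign of $\ln c^i$ on $\{c^i<1\}$ (a slip costing only an additive constant). You should drop your aside that Lemma~\ref{Lemma_2EDLOg} could replace the Gagliardo--Nirenberg step: that lemma is stated only for $p>1$, and its right-hand side involves the very entropy you are trying to bound.
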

\begin{proof}
  As a consequence of Lemmas~\ref{cwwe_log11}, \ref{cm_log11}, \ref{wwloge}, and \ref{mloge}, together with the $L^1$ estimates in \eqref{L1ess} and \eqref{mes1}, we obtain the following differential inequalities for some appropriate constant $C_1(T)>0$:
  \begin{align}
  \label{op88}
      \frac{d}{dt} \int_\Omega c^E \ln c^E + \int_\Omega \frac{\left|\nabla c^E\right|^2}{c^E}  \leq  \chi_E \int_\Omega \nabla c^E \cdot \nabla v & + \frac{\rho_E}{2}\int_\Omega (c^E)^2  + \frac{\eta \mu_M \zeta^\ast}{2}\int_\Omega (c^M)^2 + C_1(T),
  \end{align}
  and,
  \begin{align}
      \label{piu}
      \frac{d}{dt} \int_\Omega c^M \ln c^M + \int_\Omega \frac{\left|\nabla c^M\right|^2}{c^M}  \leq  \chi_M \int_\Omega \nabla c^M \cdot \nabla v + \frac{\mu_E \zeta^\ast}{2} \int_\Omega (c^E)^2 + C_1(T),
  \end{align}
  and,
  \begin{align}
      \label{oaa2}
      \nonumber \frac{d}{dt} \int_\Omega c^F \ln c^F + \int_\Omega \frac{\left|\nabla c^F\right|^2}{c^F} & + \chi_F \int_\Omega \nabla c^F \cdot \nabla v \leq R_1 \int_\Omega (c^E)^2 \\[5pt] 
       & + \frac{\eta \rho_F}{2e}\int_\Omega(c^M)^2+ R_2 \int_\Omega (c^F)^2 + C_1(T),
  \end{align}
as well as
     \begin{align}
        \label{q34}
         \frac{d}{dt} \int_\Omega m \ln m + \int_\Omega \frac{\left|\nabla m\right|^2}{m} \leq \frac{\rho_m^F}{2} \int_\Omega (c^F)^2 +  \frac{\rho_m^E}{2} \int_\Omega (c^E)^2 + C_1(T),
    \end{align}
for all $t \in (0, \widehat{T})$. Also, combining the outcomes of Lemmas \ref{vloge} and \ref{vloge2} and taking into account the positivity of $\frac{1}{2}\int_\Omega \frac{|\nabla v|^2}{v}$ and $\int_\Omega |\nabla v|^2$ we have
\begin{align}
    \label{op99}
    \nonumber & \frac{d}{dt} \left\{\frac{1}{2} \int_\Omega \frac{|\nabla v|^2}{v} + \int_\Omega |\nabla v|^2 \right\} + \delta_2 \int_\Omega \nabla c^E \cdot \nabla v + \delta_3 \int_\Omega \nabla c^M \cdot \nabla v + \frac{\rho_v}{2} \int_\Omega c^F \frac{|\nabla v|^2}{v}  \\[5pt]  
    \nonumber & +  \frac{\delta_2}{2} \int_\Omega c^E \frac{|\nabla v|^2}{v} + \frac{\delta_3}{2} \int_\Omega c^M \frac{|\nabla v|^2}{v} \leq \frac{1}{2} \int_\Omega \frac{|\nabla v|^2}{v} + \int_\Omega |\nabla v|^2 + \rho_v \int_\Omega \nabla v \cdot \nabla c^F  \\[5pt]
    & + \delta_1 \int_\Omega \frac{|\nabla m|^2}{m} + \rho_v \int_\Omega \frac{\left|\nabla c^F\right|^2}{c^F}+ \frac{\delta_2}{2} \int_\Omega \frac{\left|\nabla c^E\right|^2}{c^E} + \frac{\delta_3}{2} \int_\Omega \frac{\left|\nabla c^M\right|^2}{c^M} 
\end{align}
for all $t \in (0, T_{\max})$. Multiplying \eqref{op88} by $\frac{\delta_2}{\chi_E}$, \eqref{piu} by $\frac{\delta_3}{\chi_M}$, \eqref{oaa2} by $\frac{\rho_v}{\chi_F}$, and \eqref{q34} by $\delta_1$, and then adding the resulting inequalities to \eqref{op99}, we obtain
\begin{align}
    \label{pbtt}
    \nonumber & \frac{d}{dt} \left\{\frac{\delta_2}{\chi_E} \int_\Omega c^E \ln c^E  + \frac{\delta_3}{\chi_M}  \int_\Omega c^M \ln c^M + \frac{\rho_v}{\chi_F}  \int_\Omega c^F \ln c^F + \delta_1 \int_\Omega m \ln m + \frac{1}{2} \int_\Omega \frac{|\nabla v|^2}{v} + \int_\Omega |\nabla v|^2\right\}  \\[5pt]
    \nonumber & + \frac{\delta_2}{\chi_E} \int_\Omega \frac{\left|\nabla c^E\right|^2}{c^E} + \frac{\delta_3}{\chi_M}  \int_\Omega \frac{\left|\nabla c^M\right|^2}{c^M} + \frac{\rho_v}{\chi_F}\int_\Omega \frac{\left|\nabla c^F\right|^2}{c^F} + \frac{\rho_v}{2} \int_\Omega c^F \frac{|\nabla v|^2}{v} +  \frac{\delta_2}{2} \int_\Omega c^E \frac{|\nabla v|^2}{v} \\[5pt]
    \nonumber & + \frac{\delta_3}{2} \int_\Omega c^M \frac{|\nabla v|^2}{v}  \leq \frac{1}{2} \int_\Omega \frac{|\nabla v|^2}{v} + \int_\Omega |\nabla v|^2 + \rho_v \int_\Omega \frac{\left|\nabla c^F\right|^2}{c^F}+ \frac{\delta_2}{2} \int_\Omega \frac{\left|\nabla c^E\right|^2}{c^E} + \frac{\delta_3}{2} \int_\Omega \frac{\left|\nabla c^M\right|^2}{c^M} \\[5pt]
    & + \Lambda_1 \int_\Omega (c^E)^2 + \eta \Lambda_2 \int_\Omega (c^M)^2 + \Lambda_3 \int_\Omega (c^F)^2 + \left(\frac{\delta_2}{\chi_E} + \frac{\delta_3}{\chi_M}  +  \frac{\rho_v}{\chi_F} + \delta_1\right)C_1(T)
\end{align}
for all $t \in (0, \widehat{T})$, where 
\begin{align*}
    \Lambda_1 := \frac{\delta_2\rho_E}{2\chi_E} + \frac{\delta_3 \mu_E \zeta^\ast}{2 \chi_M} + \frac{\delta_1 \rho_m^E}{2} + \frac{\rho_v R_1}{\chi_F}, \quad \Lambda_2 := \frac{\delta_2 \mu_M \zeta^\ast}{2 \chi_E} + \frac{\rho_v \rho_F}{2e \chi_F} \quad \text{and} \quad \Lambda_3 := \frac{\delta_1 \rho_m^F}{2} + \frac{\rho_v R_2}{\chi_F}.
\end{align*}
 Let
  \begin{align*}
    \mathcal{E}(t) := & \frac{\delta_2}{\chi_E} \int_\Omega c^E(\cdot, t)\ln c^E(\cdot, t) + \frac{\delta_3}{\chi_M} \int_\Omega c^M(\cdot, t) \ln c^M(\cdot, t) + \frac{\rho_v}{\chi_F} \int_\Omega c^F(\cdot, t) \ln c^F(\cdot, t) \\[5pt]
    &  + \delta_1\int_\Omega m(\cdot, t)\ln m(\cdot, t) + \frac{1}{2} \int_\Omega \frac{|\nabla v(\cdot, t)|^2}{v(\cdot, t)} + \int_\Omega |\nabla v(\cdot, t)|^2
  \end{align*}
for all $t \in (0, T_{\max})$. Thanks to \eqref{asd99} we have $\sigma_1, \sigma_2, \sigma_3 > 0$ such that
\begin{align}
\label{po999a}
     \nonumber & \frac{d}{dt} \mathcal{E}(t) + \sigma_1 \int_\Omega \frac{\left|\nabla c^E\right|^2}{c^E} + \sigma_2 \int_\Omega \frac{\left|\nabla c^M\right|^2}{c^M} + \sigma_3 \int_\Omega \frac{\left|\nabla c^F\right|^2}{c^F} + \frac{\rho_v}{2} \int_\Omega c^F \frac{|\nabla v|^2}{v} + \frac{\delta_2}{2} \int_\Omega c^E \frac{|\nabla v|^2}{v}  \\[5pt]
     \nonumber & + \frac{\delta_3}{2} \int_\Omega c^M \frac{|\nabla v|^2}{v}  \leq \frac{1}{2} \int_\Omega \frac{|\nabla v|^2}{v}  + \int_\Omega |\nabla v|^2 + \Lambda_1 \int_\Omega (c^E)^2 + \eta \Lambda_2 \int_\Omega (c^M)^2 + \Lambda_3 \int_\Omega (c^F)^2 \\[5pt]
      & + \left(\frac{\delta_2}{\chi_E} + \frac{\delta_3}{\chi_M}  +  \frac{\rho_v}{\chi_F} + \delta_1\right)C_1(T)
\end{align}
for all $t \in (0, \widehat{T})$, where
\begin{align*}
    & \sigma_1 := \delta_2 \left(\frac{1}{\chi_E} - \frac{1}{2}\right), \quad \sigma_2 := \delta_3 \left(\frac{1}{\chi_M} - \frac{1}{2}\right) \quad \text{and} \quad
    \sigma_3 := \rho_v \left(\frac{1}{\chi_F} - 1\right). 
\end{align*}
Next, we estimate $\eta \Lambda_2 \int_\Omega (c^M)^2$ using the Gagliardo-Nirenberg inequality \cite[(3.33)]{surulescu2021does}
\begin{align}
\label{gaagge}
    \int_\Omega \varphi^4 
    \leq C_{\mathrm{GN}}^{(2)}\left\{\int_\Omega |\nabla \varphi|^2\right\} \cdot
    \left\{\int_\Omega \varphi^2\right\}+ C_{\mathrm{GN}}^{(2)}\left\{\int_\Omega \varphi^2\right\}^2 \qquad \text{for all } \varphi\in W^{1,2}(\Omega).
\end{align}
For fixed $T > 0$, choose $\eta \equiv \eta(T) > 0$ sufficiently small such that 
\begin{align}
\label{aakk}
    \frac{\eta \Lambda_2 C_{\mathrm{GN}}^{(2)} M_1(T)}{4} = \frac{\sigma_2}{2}.
\end{align}
In view of \eqref{aakk} and \eqref{L1ess}, it follows from \eqref{gaagge} that
\begin{align}
    \label{GNqq}
    \nonumber
    \eta \Lambda_2 \int_\Omega (c^M)^2 &\leq \frac{\eta \Lambda_2 C_{\mathrm{GN}}^{(2)}}{4} \left\{\int_\Omega \frac{\left|\nabla c^M\right|^2}{c^M}\right\} \cdot \left\{\int_\Omega c^M\right\}
    + \eta \Lambda_2 C_{\mathrm{GN}}^{(2)}
    \left\{\int_\Omega c^M\right\}^2 \\[5pt]
    \nonumber
    &\leq
    \frac{\eta \Lambda_2 C_{\mathrm{GN}}^{(2)}
    M_1(T)}{4}
    \int_\Omega \frac{\left|\nabla c^M\right|^2}{c^M}
    + \eta \Lambda_2 C_{\mathrm{GN}}^{(2)}
    M_1(T)^2 \\[5pt]
    &=
    \frac{\sigma_2}{2}
    \int_\Omega \frac{\left|\nabla c^M\right|^2}{c^M}
    + \eta \Lambda_2 C_{\mathrm{GN}}^{(2)}
    M_1(T)^2
\end{align}
for all $t\in(0,\widehat{T})$. Inserting \eqref{GNqq} into \eqref{po999a} results in
\begin{align}
\label{o999aw}
     \nonumber & \frac{d}{dt} \mathcal{E}(t) + \sigma_1 \int_\Omega \frac{\left|\nabla c^E\right|^2}{c^E} + \frac{\sigma_2}{2} \int_\Omega \frac{\left|\nabla c^M\right|^2}{c^M} + \sigma_3 \int_\Omega \frac{\left|\nabla c^F\right|^2}{c^F}  + \frac{\rho_v}{2} \int_\Omega c^F \frac{|\nabla v|^2}{v}  + \frac{\delta_2}{2} \int_\Omega c^E \frac{|\nabla v|^2}{v}\\[5pt]
     &    + \frac{\delta_3}{2} \int_\Omega c^M \frac{|\nabla v|^2}{v}  \leq \frac{1}{2} \int_\Omega \frac{|\nabla v|^2}{v} + \int_\Omega |\nabla v|^2 + \Lambda_1 \int_\Omega (c^E)^2 + \Lambda_3 \int_\Omega (c^F)^2 + \overline{\Lambda}(T)
\end{align}
for all $t\in(0,\widehat{T})$, where 
\begin{align*}
    \overline{\Lambda}(T) := \left(\frac{\delta_2}{\chi_E} + \frac{\delta_3}{\chi_M}  +  \frac{\rho_v}{\chi_F} + \delta_1\right)C_1(T) + \eta \Lambda_2 C_{\mathrm{GN}}^{(2)}
    M_1(T)^2.
\end{align*}
From \eqref{o999aw} we have the following differential inequality
\begin{align*}
    \nonumber \frac{d}{dt} \mathcal{E}(t) & \leq  \mathcal{E}(t) - \frac{\delta_2}{\chi_E} \int_\Omega c^E \ln c^E - \frac{\delta_3}{\chi_M}  \int_\Omega c^M \ln c^M - \frac{\rho_v}{\chi_F}  \int_\Omega c^F \ln c^F - \delta_1 \int_\Omega m \ln m \\[5pt]
     & + \Lambda_1 \int_\Omega (c^E)^2 + \Lambda_3 \int_\Omega (c^F)^2 + \overline{\Lambda}(T)
\end{align*}
for all $t\in(0,\widehat{T})$. Thanks to the logarithmic inequality $-s\ln s\leq e^{-1}$ for all $s>0$, we obtain
\begin{align*}
\frac{d}{dt}\mathcal{E}(t)
\leq \mathcal{E}(t)
+ \left(
\frac{\delta_2}{\chi_E}
+\frac{\delta_3}{\chi_M}
+\frac{\rho_v}{\chi_F}
+\delta_1
\right)\frac{|\Omega|}{e}
+\overline{\Lambda}(T)
+\Lambda_1\int_\Omega (c^E)^2
+\Lambda_3\int_\Omega (c^F)^2
\end{align*}
for all $t\in(0,\widehat{T})$. Hence, by an application of Gronwall's lemma, together with \eqref{sqc}, there exists a constant $\Lambda^\ast(T)>0$ such that
\begin{align*}
\mathcal{E}(t)
& \leq e^{T}\left\{
\mathcal{E}(0)+\left(\frac{\delta_2}{\chi_E}+\frac{\delta_3}{\chi_M}+\frac{\rho_v}{\chi_F} + \delta_1
\right)\frac{|\Omega|T}{e}+\overline{\Lambda}(T)T
+\Lambda_1\int_0^{\widehat{T}}\int_\Omega (c^E)^2
+\Lambda_3\int_0^{\widehat{T}}\int_\Omega (c^F)^2
\right\} \\[5pt]
& \leq  e^{T}\left\{
\mathcal{E}(0)+\left(\frac{\delta_2}{\chi_E}+\frac{\delta_3}{\chi_M}+\frac{\rho_v}{\chi_F} + \delta_1
\right)\frac{|\Omega|T}{e}+\overline{\Lambda}(T)T
+(\Lambda_1+\Lambda_3)M_2(T)
\right\}
=: \Lambda^\ast(T)
\end{align*}
for all $t\in(0,\widehat{T})$. We next derive separate upper bounds for the first three logarithmic terms constituting $\mathcal{E}(t)$. Since $s\ln s\geq -e^{-1}$ for all $s>0$, we have 
\begin{align*}
    \frac{\delta_2}{\chi_E} \int_\Omega c^E \ln c^E  \leq \mathcal{E}(t) + \left(\frac{\delta_3}{\chi_M}
+\frac{\rho_v}{\chi_F} + \delta_1\right)\frac{|\Omega|}{e} \leq \Lambda^\ast(T) + \left(\frac{\delta_3}{\chi_M}
+\frac{\rho_v}{\chi_F} + \delta_1\right)\frac{|\Omega|}{e},
\end{align*}
for all $t\in(0,\widehat{T})$ and hence
\begin{align}
\label{p8x}
     \int_\Omega c^E \ln c^E \leq \frac{\chi_E}{\delta_2} \Lambda^\ast(T) + \frac{\chi_E}{\delta_2} \left(\frac{\delta_3}{\chi_M}
+\frac{\rho_v}{\chi_F} + \delta_1\right)\frac{|\Omega|}{e} =: C_E(T).
\end{align}
for all $t\in(0,\widehat{T})$. Similarly, we have 
\begin{align}
     \label{pybaa}
      & \int_\Omega c^M \ln c^M \leq \frac{\chi_M}{\delta_3} \Lambda^\ast(T) + \frac{\chi_M}{\delta_3}\left(\frac{\delta_2}{\chi_E} + \frac{\rho_v}{\chi_F} + \delta_1\right)\frac{|\Omega|}{e} =: C_M(T), \\[5pt]
    \label{pybb}
     & \int_\Omega c^F \ln c^F \leq \frac{\chi_F}{\rho_v} \Lambda^\ast(T) + \frac{\chi_F}{\rho_v}\left(\frac{\delta_2}{\chi_E} + \frac{\delta_3}{\chi_M} + \delta_1\right)\frac{|\Omega|}{e} =: C_F(T).
\end{align}
for all $t\in(0,\widehat{T})$. In view of \eqref{cov}, \eqref{v_es1} and \eqref{p8x} observe that
\begin{align}
\label{alpo2}
    \nonumber \int_\Omega a \ln a = \int_\Omega e^{-\chi_E v} c^E \ln \left(e^{-\chi_E v} c^E\right) & =  \int_\Omega e^{-\chi_E v} c^E (- \chi_E v + \ln c^E)\\[5pt] & \leq \int_\Omega c^E \ln c^E \leq C_E(T)
\end{align}
for all $t\in(0,\widehat{T})$. Similarly, using \eqref{cov}, \eqref{v_es1} and \eqref{pybaa} we have
\begin{align}
\label{uloh3}
    \nonumber \int_\Omega u \ln u = \int_\Omega e^{-\chi_M v} c^M \ln \left(e^{-\chi_M v} c^M\right) & =  \int_\Omega e^{-\chi_M v} c^M (- \chi_M v + \ln c^M)\\[5pt] & \leq \int_\Omega c^M \ln c^M \leq C_M(T)
\end{align}
for all $t\in(0,\widehat{T})$. Also, using \eqref{cov}, \eqref{v_es1}, \eqref{L1ess} and \eqref{pybb} we have
\begin{align}
\label{wloh3}
    \nonumber \int_\Omega w \ln w = \int_\Omega e^{\chi_F v} c^F \ln \left(e^{\chi_F v} c^F\right) & =  \int_\Omega e^{\chi_F v} c^F ( \chi_F v + \ln c^F)\\[5pt] 
    \nonumber & \leq \chi_F \Lambda_F M_1(T) + \Lambda_F \int_\Omega c^F \ln c^F \\[5pt]
    &\leq  \chi_F \Lambda_F M_1(T) + \Lambda_F C_F(T)
\end{align}
for all $t\in(0,\widehat{T})$. Finally, using once again $s\ln s\geq -e^{-1}$ for all $s>0$, we obtain for each $f \in \{a, u, w\}$
 \begin{align}
 \label{lllii}
    \nonumber \int_\Omega f(\cdot, t) |\ln f(\cdot, t)| & = \int_\Omega f(\cdot, t) \ln f(\cdot, t) - 2 \int_{\{f < 1\}} f(\cdot, t) \ln f(\cdot, t)                  \\[5pt]
     & \leq \int_\Omega f(\cdot, t) \ln f(\cdot, t) + 2 e^{-1} |\Omega| 
\end{align}
for all $t\in(0,\widehat{T})$.  In view of \eqref{alpo2}-\eqref{wloh3} and \eqref{lllii}, we have constants  $\widehat{C}_E(T), \widehat{C}_M(T), \widehat{C}_F(T) >0$ such that 
\begin{align*}
     & \int_\Omega a(\cdot, t) |\ln a(\cdot, t)| \leq C_E(T) +  2 e^{-1} |\Omega| =: \widehat{C}_E(T), \\[5pt]
     &\int_\Omega u(\cdot, t) |\ln u(\cdot, t)| \leq C_M(T) +  2 e^{-1} |\Omega| =: \widehat{C}_M(T) \qquad \text{and} \\[5pt]
     & \int_\Omega w(\cdot, t) |\ln w(\cdot, t)| \leq  \chi_F \Lambda_F M_1(T) + \Lambda_F C_F(T)  +  2 e^{-1} |\Omega|=: \widehat{C}_F(T)
\end{align*}
for all $t\in(0,\widehat{T})$. Thus establishing the desired outcomes in \eqref{cEln_bound}-\eqref{cFln_bound}.
\end{proof}
We will now proceed to derive an a priori $L^\infty$ estimate for $a$, $u$ and $w$, and hence for $c^E$, $c^M$ and $c^F$. This will be achieved by combining the outcome of Lemma \ref{entropy_lemma_ablog} with a standard $L^p$ testing procedure, followed by applying the well-established Moser-type iteration argument.
\begin{lemma}
\label{lemma_ablinfty}
Let $T > 0$. Suppose that the assumptions of Lemma \ref{entropy_lemma_ablog} are satisfied. Then there exists a constant $C(T) > 0$ such that
\begin{align}
\label{c3llnfty12}
& \|a(\cdot,t)\|_{L^\infty(\Omega)} \leq C(T), \\[5pt]
\label{c1llnfty12}
& \|u(\cdot,t)\|_{L^\infty(\Omega)} \leq C(T) \qquad \text{and} \\[5pt]
\label{c2llnfty12}
& \|w(\cdot,t)\|_{L^\infty(\Omega)} \leq C(T)
\end{align}
for all $t \in (0,\widehat{T})$, where $\widehat{T} := \min\{T, T_{\max}\}$.
\end{lemma}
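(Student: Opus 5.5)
The plan is to test the divergence-form equations in \eqref{sys1} with weighted powers. This removes the gradient of $v$ completely, so $\nabla v$ never has to be controlled. For $p>1$ I consider
\[
\frac{d}{dt}\int_\Omega e^{\chi_E v}a^p,\qquad \frac{d}{dt}\int_\Omega e^{\chi_M v}u^p,\qquad \frac{d}{dt}\int_\Omega e^{-\chi_F v}w^p .
\]
In each case the diffusion part integrates by parts to $-p(p-1)\int_\Omega e^{\pm\chi v}a^{p-2}|\nabla a|^2$ (and likewise for $u$, $w$), with no cross terms. The time derivative of the weight produces a term $\pm\chi\int_\Omega e^{\pm\chi v}v_t\,a^p$. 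By \eqref{v_es1}, $|v_t|\le C(1+m+c^E+c^M+c^F)$, so this term is bounded by $C\int_\Omega(1+m+a+u+w)a^p$.

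The reaction terms $F_1,F_2,F_3$ are handled next.
\begin{itemize}
\item Using $0\le v\le1$ and \eqref{zeta}, the positive parts of $F_1$ and $F_2$ are bounded by $C(a+u+amv+a^2+au+aw)$ and $C(a+u+mu+au+u^2)$ respectively.
\item For $F_3$, the positive parts are bounded by $C(w+aw+w^2)$; the negative logistic term is simply dropped.
\end{itemize}
Summing the three functionals $y(t):=\int_\Omega(a^p+u^p+w^p)$ (up to weights), all the resulting terms are dominated by $C\int_\Omega (1+m+a+u+w)(a^p+u^p+w^p)+C\int_\Omega(a^{p-1}u+u^{p-1}a)$. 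The last integral is handled by Young's inequality. For the term containing $m$, Hölder's inequality with Lemma \ref{lemma_mboundpq} gives
\[
\int_\Omega m f^p\le \|m\|_{L^{p+1}}\|f\|_{L^{p+1}}^{p}\le \varepsilon\int_\Omega f^{p+1}+C_\varepsilon .
\]
Everything therefore reduces to bounding $\int_\Omega f^{p+1}$ for $f\in\{a,u,w\}$.

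This is the main step, and I do it with Lemma \ref{Lemma_2EDLOg} applied to $\varphi=f^{p/2}$. Then $\|\varphi\|_{L^{2(p+1)/p}}^{2(p+1)/p}=\int_\Omega f^{p+1}$ and $\|\nabla\varphi\|_{L^2}^2=\frac{p^2}{4}\int_\Omega f^{p-2}|\nabla f|^2$. Moreover,
\[
\int_\Omega|\varphi|^{2/p}|\ln|\varphi||=\frac p2\int_\Omega f|\ln f|\le \frac p2\widehat C(T)
\]
by Lemma \ref{entropy_lemma_ablog}, and $\|\varphi\|_{L^{2/p}}^{2/p}=\int_\Omega f$ is bounded by Corollary \ref{L1_z}. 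Hence for any $\varepsilon'>0$,
\[
\int_\Omega f^{p+1}\le \varepsilon'\int_\Omega f^{p-2}|\nabla f|^2+C(\varepsilon',p,T).
\]
Since the weights $e^{\pm\chi v}$ lie between $\min\{1,\Lambda^{-1}\}$ and $\max\{1,\Lambda\}$, I can choose $\varepsilon'$ small enough that the gradient terms are absorbed by the dissipation. I also add $\int_\Omega f^p\le\int_\Omega f^{p+1}+|\Omega|$ to both sides.

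This produces $y'+y\le C(p,T)$, and hence $\sup_{t<\widehat T}\|f(\cdot,t)\|_{L^p}\le C(p,T)$ for every $p$. The difficulty is making sure that every superlinear term, including the cross terms such as $aw\,a^{p-1}$ and $u^2u^{p-2}$ coming from the change of variables, has total degree at most $p+1$. I would expect this to hold, since each product is at most quadratic in the cell variables times $f^{p-1}$.

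With $L^p$ bounds for all $p$ in hand, the $L^\infty$ bounds follow. The equation for $a$ can be written as $a_t=\Delta a+\chi_E\nabla v\cdot\nabla a+F_1$, but I prefer the divergence form $e^{\chi_E v}a_t=\nabla\cdot(e^{\chi_E v}\nabla a)+e^{\chi_E v}F_1$, with a uniformly elliptic, bounded, measurable coefficient. The right-hand side $e^{\chi_E v}F_1$ is bounded by $h\,a+C u$ with $h\in L^q$ for every $q$. A Moser--Alikakos iteration \cite{alikakos1979lp} in the weighted $L^{p_k}$ norms, $p_k=2^k$, then gives bounds that are uniform in $k$, which yields \eqref{c3llnfty12}, \eqref{c1llnfty12} and \eqref{c2llnfty12}. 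The same iteration applies to $u$ and $w$, using the $L^q$ bounds for $m$ and the cell densities to control the coefficients. The iteration requires only $L^\infty$ control of $v$, which is already available, and not of $\nabla v$.
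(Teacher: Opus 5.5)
Your proposal is correct and follows essentially the same route as the paper: you test the divergence-form equations with $e^{\chi_E v}a^{p-1}$, $e^{\chi_M v}u^{p-1}$ and $e^{-\chi_F v}w^{p-1}$, and you check that all reaction and weight-derivative terms have degree at most $p+1$. You then absorb $\int_\Omega f^{p+1}$ via Lemma~\ref{Lemma_2EDLOg} together with the $L\log L$ bounds of Lemma~\ref{entropy_lemma_ablog} and the $L^1$ bounds, close with an ODE comparison, and finish with a Moser--Alikakos iteration. The only minor deviation is in that last step: you treat $m$ and the cell densities as $L^q$ coefficients in a linear iteration, rather than first bounding $b$ and $m$ in $L^\infty$ by Neumann heat semigroup estimates as the paper does; this works equally well, since in two dimensions a potential uniformly bounded in some $L^q$ with $q>1$ suffices, and $b$ is not needed at all because $\zeta_i\le\zeta^\ast$.
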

\begin{proof}
  Thanks to Lemma \ref{lemma_mboundpq} for any $p+ 1$ we have
    \begin{align}
    \label{m_bound}
        \int_\Omega m^{p + 1} \leq K_1(p, T)
    \end{align}
    for all $t \in (0, \widehat{T})$. Testing the $a$ equation by $e^{\chi_E v} a^{p - 1}$ with $p > 4$ and integrating by parts results in 
   \begin{align}
     \label{app1}
      \nonumber & \frac{d}{dt} \int_\Omega e^{\chi_E v} a^p + \int_\Omega e^{\chi_E v} a^p= p \int_\Omega e^{\chi_E v} a^{p - 1} a_t + \chi_E \int_\Omega e^{\chi_E v} a^p v_t + \int_\Omega e^{\chi_E v} a^p\\[5pt]
     \nonumber & = p \int_\Omega e^{\chi_E v} a^{p - 1} \big\{ e^{-\chi_E v} \nabla \cdot \big(e^{\chi_E v} \nabla a\big)- \mu_E \zeta_1(b) a + \mu_M \zeta_2(b) e^{(\chi_M - \chi_E) v} u  - \rho_v \chi_E e^{- \chi_F v} a w (1 - v)\\[5pt]
  \nonumber & + \rho_E a\left(1 - e^{\chi_E v} a - e^{\chi_M v} u - e^{- \chi_F v} w - v\right)  + \delta_1 \chi_E a m v + \delta_2 \chi_E e^{\chi_E v} a^2 v + \delta_3 \chi_E e^{\chi_M v} a u v\big\} \\[5pt]
  & + \chi_E \int_\Omega e^{\chi_E v} a^p\big\{\rho_v e^{-\chi_F v} w \big(1 - v\big) - \delta_1 m v - \delta_2 e^{\chi_E v} a v - \delta_3 e^{\chi_M v} uv\big\} + \int_\Omega e^{\chi_E v} a^p
\end{align}
for all $t \in (0, T_{\max})$. Since 
\begin{align*}
    -(p - 1) \rho_v \chi_E \int_\Omega e^{(\chi_E - \chi_F)v} a^p w (1 - v) \leq 0 \qquad \text{for all}~ t \in (0, T_{\max}).
\end{align*}
We will now estimate the remaining terms. To this end, first consider the term $ p \mu_M \zeta_2(b) \int_\Omega e^{\chi_M v} a^{p - 1} u$. A few applications of Young's inequality, together with \eqref{zeta}, yields
\begin{align}
    \label{iue12}
    \nonumber p \mu_M \int_\Omega \zeta_2(b) e^{\chi_M v} a^{p - 1} u & \leq p\Lambda_M \mu_M \zeta^\ast\left\{\frac{p - 1}{p + 1} \int_\Omega a^{p + 1} + \frac{2}{p + 1} \int_\Omega u^{\frac{p + 1}{2}}\right\} \\[5pt]
    & \leq p\Lambda_M \mu_M \zeta^\ast\left\{\frac{p - 1}{p + 1} \int_\Omega a^{p + 1} + \frac{1}{p + 1} \int_\Omega u^{p + 1} + \frac{1}{p + 1}|\Omega|\right\}
\end{align}
for all $t \in (0, T_{\max})$. Thanks to \eqref{v_es1}, \eqref{m_bound} and Young's inequality we have
\begin{align}
    \label{er33}
     \nonumber (p - 1) \delta_1 \chi_E \int_\Omega e^{\chi_E v} a^p m v & \leq (p - 1) \delta_1 \chi_E  \Lambda_E \int_\Omega a^p m \\[5pt]
     \nonumber & \leq (p - 1) \delta_1 \chi_E  \Lambda_E \left\{\frac{p}{p + 1} \int_\Omega a^{p + 1} + \frac{1}{p + 1} \int_\Omega m^{p + 1}\right\} \\[5pt]
     & \leq (p - 1) \delta_1 \chi_E  \Lambda_E \left\{\frac{p}{p + 1} \int_\Omega a^{p + 1} + \frac{1}{p + 1} K_1(p, T)\right\}
\end{align}
for all $t \in (0, \widehat{T})$. Moreover, thanks to \eqref{v_es1}, we have
    \begin{align}
        \label{opi2sa1}
        (p - 1) \delta_2 \chi_E \int_\Omega e^{2\chi_E v} a^{p + 1} v \leq p \delta_2 \chi_E \Lambda_E^2 \int_\Omega a^{p + 1}
    \end{align}
    for all $t \in (0, T_{\max})$. Again, thanks to \eqref{v_es1} and Young's inequality we obtain
    \begin{align*}
        (p - 1) \delta_3 \chi_E \int_\Omega e^{(\chi_E + \chi_M)v} a^p u v & \leq  (p - 1) \delta_3 \chi_E \int_\Omega e^{(\chi_E + \chi_M)v} a^p u \\[5pt]
        & \leq  (p - 1) \delta_3 \chi_E \Lambda_E \Lambda_M \left\{\frac{p}{p + 1} \int_\Omega a^{p + 1} + \frac{1}{p + 1} \int_\Omega u^{p+1}\right\}
    \end{align*}
  for all $t \in (0, T_{\max})$.  Also, by means of Young's inequality, we have
  \begin{align}
     \label{p9bbaa}
     (p \rho_E + 1)\int_\Omega e^{\chi_E v} a^p \leq p \int_\Omega a^{p + 1} + \Lambda_E^{p+1}\frac{(p \rho_E + 1)^{p + 1}}{(p + 1)^{p + 1}} |\Omega|
 \end{align}
for all $t \in (0, T_{\max})$. Inserting \eqref{iue12}-\eqref{p9bbaa} into \eqref{app1} and discarding the negative terms, we obtain
\begin{align}
     \label{muzz22}
      \nonumber \frac{d}{dt} \int_\Omega e^{\chi_E v} a^p + \int_\Omega e^{\chi_E v} a^p & + \frac{4(p - 1)}{p} \int_\Omega e^{\chi_E v} |\nabla a^{\frac{p}{2}}|^2 \\[5pt]
      &\leq p K_2(p) \int_\Omega a^{p + 1}  + p K_3(p) \int_\Omega u^{p + 1} + K_4(p, T)
\end{align}
 for all $t \in (0, \widehat{T})$, where
 \begin{align*}
     &  K_2(p) := \left(\frac{p - 1}{p + 1}\right) \left\{\Lambda_M \mu_M \zeta^\ast + \delta_1 \chi_E \Lambda_E + \delta_3 \chi_E \Lambda_E\Lambda_M\right\} + \delta_2 \chi_E \Lambda_E^2 + 1,  \\[5pt]
     & K_3(p) :=  \frac{\Lambda_M \mu_M \zeta^\ast}{p + 1} + \frac{1}{p} \cdot \left(\frac{p - 1}{p + 1}\right) \delta_3 \chi_E \Lambda_E \Lambda_M,  \\[5pt]
     & K_4(p, T) := \left(\frac{p}{p + 1}\right) \Lambda_M \mu_M \zeta^\ast |\Omega|  + \Lambda_E^{p+1}\frac{(p \rho_E + 1)^{p + 1}}{(p + 1)^{p + 1}} |\Omega|+ \left(\frac{p - 1}{p + 1}\right) \delta_1 \chi_E \Lambda_E K_1(p, T).
 \end{align*}
   Testing the $u$ equation by $e^{\chi_M v} u^{p - 1}$ with $p > 4$ and integrating by parts results in 
    \begin{align}
    \label{pu002}
     \nonumber & \frac{d}{dt} \int_\Omega e^{\chi_M v} u^p + \int_\Omega e^{\chi_M v} u^p = p \int_\Omega e^{\chi_M v} u^{p - 1} u_t + \chi_M \int_\Omega e^{\chi_M v} u^p v_t + \int_\Omega e^{\chi_M v} u^p   \\[5pt]
     \nonumber & = p \int_\Omega e^{\chi_M v} u^{p - 1} \Big\{ e^{-\chi_M v} \nabla \cdot \big(e^{\chi_M v} \nabla u\big) + \mu_E \zeta_1(b) e^{(\chi_E -\chi_M) v} a -\mu_M\zeta_2(b)u \\[5pt]
     \nonumber &  +\delta_1 \chi_M mvu  + \delta_2 \chi_M e^{\chi_E v} a u v + \delta_3\chi_M e^{\chi_M v} u^2 v -\rho_v \chi_Me^{-\chi_F v} uw \big(1-v\big)\Big\} \\[5pt]
    &+  \chi_M \int_\Omega e^{\chi_M v} u^p \Big\{ \rho_v e^{-\chi_F v} w \big(1 - v\big) - \delta_1 m v - \delta_2 e^{\chi_E v} a v - \delta_3 e^{\chi_M v} uv\Big\}  + \int_\Omega e^{\chi_M v} u^p
\end{align}
for all $t \in (0, T_{\max})$. Proceeding analogously as above, we have
\begin{align}
    \label{iuewss}
     p \mu_E \int_\Omega \zeta_1(b) e^{\chi_E v} u^{p - 1} a \leq p\Lambda_E \mu_E \zeta^\ast\left\{\frac{p - 1}{p + 1} \int_\Omega u^{p + 1} + \frac{1}{p + 1} \int_\Omega a^{p + 1} + \frac{1}{p + 1}|\Omega|\right\},
\end{align}
for all $t \in (0, T_{\max})$, and
\begin{align}
\label{ao88}
    (p - 1) \delta_1 \chi_M \int_\Omega e^{\chi_M v} u^p m v  \leq (p - 1) \delta_1 \chi_M  \Lambda_M \left\{\frac{p}{p + 1} \int_\Omega u^{p + 1} + \frac{1}{p + 1} K_1(p, T)\right\}
\end{align}
for all $t \in (0, \widehat{T})$. Thanks to \eqref{v_es1} and Young's inequality we obtain
\begin{align}
    \label{pnee}
    (p - 1) \delta_2 \chi_M \int_\Omega e^{\chi_M v} u^p a v \leq (p - 1) \delta_2 \chi_M \Lambda_M\left\{\frac{p}{p + 1} \int_\Omega u^{p + 1} + \frac{1}{p + 1} \int_\Omega a^{p + 1}\right\}
\end{align}
    for all $t \in (0, T_{\max})$. Again, thanks to \eqref{v_es1} and Young's inequality, we have
    \begin{align}
    \label{pn11ee}
    (p - 1) \delta_3 \chi_M \int_\Omega e^{2\chi_M v} u^{p + 1} v \leq (p - 1) \delta_3 \chi_M \Lambda_M^2 \int_\Omega u^{p + 1}
\end{align}
 for all $t \in (0, T_{\max})$. Also, we have
 \begin{align}
     \label{opii}
     \int_\Omega e^{\chi_M v} u^{p} \leq \frac{p}{p + 1}\Lambda_M \int_\Omega u^{p + 1} + \frac{\Lambda_M}{p + 1} |\Omega|
 \end{align}
 for all $t \in (0, T_{\max})$. Inserting \eqref{iuewss}-\eqref{opii} into \eqref{pu002} results in 
\begin{align}
     \label{muzzuu}
      \nonumber \frac{d}{dt} \int_\Omega e^{\chi_M v} u^p + \int_\Omega e^{\chi_M v} u^p & + \frac{4(p - 1)}{p} \int_\Omega e^{\chi_M v} |\nabla u^{\frac{p}{2}}|^2 \\[5pt]
      &\leq p K_5(p) \int_\Omega a^{p + 1}  + p K_6(p) \int_\Omega u^{p + 1} + K_7(p, T)
\end{align}
 for all $t \in (0, \widehat{T})$, where
 \begin{align*}
     &  K_5(p) := \frac{\Lambda_E \mu_E \zeta^\ast}{p + 1} + \left(1 - \frac{1}{p}\right) \frac{1}{p + 1}\delta_2 \chi_M \Lambda_M,  \\[5pt]
     & K_6(p) := \frac{p - 1}{p + 1}\left\{\Lambda_E \mu_E \zeta^\ast + (\delta_1 + \delta_2)\chi_M \Lambda_M \right\} + \left(1 - \frac{1}{p}\right) \delta_3 \chi_M \Lambda_M^2 + \frac{1}{p + 1} \Lambda_M ,  \\[5pt]
     & K_7(p, T) := \frac{p }{p + 1}\Lambda_E \mu_E \zeta^\ast|\Omega| + \frac{p - 1}{p + 1} \delta_1 \chi_M \Lambda_M K_1(p, T) + \frac{\Lambda_M}{p + 1}|\Omega|.
 \end{align*}
   Testing the $w$ equation by $e^{-\chi_F v} w^{p - 1}$ with $p > 4$ and integrating by parts results in (For details of this calculation, see Lemma \ref{3dp_lemma1})
   \begin{align}
    \label{wplkw}
    \nonumber \frac{d}{dt} \int_\Omega e^{-\chi_F v} w^p + \int_\Omega e^{-\chi_F v} w^p & + \frac{4(p-1)e^{-\chi_F}}{p}\int_\Omega |\nabla w^{\frac p2}|^2 \\[5pt] & \leq p \Pi_3 \int_\Omega a^{p + 1} + p \Pi_4 \int_\Omega w^{p + 1} + \Pi_5 
 \end{align}
 for all $t \in (0, T_{\max})$. Combining \eqref{muzz22}, \eqref{muzzuu} and \eqref{wplkw} results in the following ordinary differential inequality for $p > 4$
 \begin{align}
     \label{po866}
     \nonumber & \frac{d}{dt} \left\{\int_\Omega e^{\chi_E v} a^p + \int_\Omega e^{\chi_M v} u^p +  \int_\Omega e^{-\chi_F v} w^p \right\} +  \left\{\int_\Omega e^{\chi_E v} a^p + \int_\Omega e^{\chi_M v} u^p +  \int_\Omega e^{-\chi_F v} w^p \right\} \\[5pt]
    \nonumber &+ \frac{4(p - 1)}{p} \left\{\int_\Omega e^{\chi_E v} |\nabla a^{\frac{p}{2}}|^2 +  \int_\Omega e^{\chi_M v} |\nabla u^{\frac{p}{2}}|^2 + e^{-\chi_F}\int_\Omega |\nabla w^{\frac p2}|^2\right\} \\[5pt]
    & \leq K_8(p) \int_\Omega a^{p + 1} +  K_9(p) \int_\Omega u^{p + 1} +  K_{10}(p) \int_\Omega w^{p + 1} + K_{11}(p, T) 
 \end{align}
 for all $t \in (0, \widehat{T})$, where $K_8(p) := p K_2(p) + pK_5(p) + p\Pi_3$, $K_9(p) := p K_3(p) + p K_6(p)$ and $K_{10}(p) = p \Pi_4$ and $K_{11}(p, T) := K_4(p, T) + K_7(p, T) + \Pi_5$.  
 
 Next, we control $\int_\Omega a^{p+1}$,  $\int_\Omega u^{p+1}$ and $\int_\Omega w^{p+1}$ by the dissipation terms $\int_\Omega |\nabla a^{\frac p2}|^2$, $\int_\Omega |\nabla u^{\frac p2}|^2$ and $\int_\Omega |\nabla w^{\frac p2}|^2$ respectively by means of Lemma \ref{Lemma_2EDLOg}. Let $\varphi = a^{\frac{p}{2}}$ then by means of Lemma \ref{Lemma_2EDLOg} we have
\begin{align}
\label{ins1}
\nonumber K_8(p)\int_\Omega a^{p+1} &= K_8(p)\|a^{\frac p2}\|_{L^{\frac{2(p+1)}p}(\Omega)}^{\frac{2(p+1)}p} \\[5pt]
\nonumber &\leq K_8(p) \varepsilon\|\nabla a^{\frac{p}{2}}\|^2_{L^2(\Omega)} \cdot \int_\Omega a |\ln a^{\frac{p}{2}}| + K_8(p) K(p, \varepsilon) \cdot \left(\|a^{\frac{p}{2}}\|^{\frac{2(p + 1)}{p}}_{L^{\frac{2}{p}}(\Omega)} + 1\right) \\[5pt]
& = \frac{pK_8(p) \varepsilon}{2} \|\nabla a^{\frac{p}{2}}\|^2_{L^2(\Omega)} \cdot \int_\Omega a |\ln a| + K_8(p) K(p, \varepsilon) \cdot \left\{\left(\int_\Omega a\right)^{p + 1} + 1\right\}
\end{align}
for all $t \in (0, T_{\max})$. In view of \eqref{zL1_bound} and \eqref{cEln_bound}, it follows directly from \eqref{ins1} that
\begin{align}
    \label{ins2}
    K_8(p)\int_\Omega a^{p+1}\leq  \frac{pK_8(p)\widehat{C}_E(T)}{2} \varepsilon \|\nabla a^{\frac{p}{2}}\|^2_{L^2(\Omega)} + K_8(p) K(p, \varepsilon) \cdot \left\{(a^\ast(T))^{p + 1} + 1\right\}
\end{align}
for all $t \in (0, \widehat{T})$. For fixed $T > 0$, choose $\varepsilon \equiv \varepsilon(p, T) > 0$ sufficiently small such that with some constant $K_{12}(p,T, \varepsilon)>0$ 
\begin{align}
    \label{ins3}
     K_8(p)\int_\Omega a^{p+1} \leq \frac{4(p-1)}{p}\int_\Omega |\nabla a^{\frac p2}|^2 + K_{12}(\varepsilon, p, T)
\end{align}
for all $t \in (0, \widehat{T})$. Proceeding similarly as above, we obtain a constant $K_{13}(\varepsilon, p, T) > 0$ such that, for some $\varepsilon\equiv\varepsilon(p,T)>0$, we have
\begin{align}
    \label{ins4}
    K_9(p)\int_\Omega u^{p+1} \leq \frac{4(p-1)}{p}\int_\Omega |\nabla u^{\frac p2}|^2 + K_{13}(\varepsilon, p, T)
\end{align}
for all $t \in (0, \widehat{T})$ and a $K_{14}(\varepsilon, p, T) > 0$ such that
\begin{align}
    \label{ins00}
     K_{10}(p)\int_\Omega w^{p+1} \leq \frac{4(p-1)e^{-\chi_F}}{p}\int_\Omega |\nabla w^{\frac p2}|^2 + K_{14}(\varepsilon, p, T)
\end{align}
for all $t \in (0, \widehat{T})$. Inserting \eqref{ins3}-\eqref{ins00} into \eqref{po866} results in 
\begin{align}
     \label{pou6}
     \nonumber \frac{d}{dt} \bigg\{\int_\Omega e^{\chi_E v} a^p + \int_\Omega e^{\chi_M v} u^p & +  \int_\Omega e^{-\chi_F v} w^p \bigg\} \\[5pt]
     & +  \left\{\int_\Omega e^{\chi_E v} a^p + \int_\Omega e^{\chi_M v} u^p +  \int_\Omega e^{-\chi_F v} w^p \right\} \leq K_{15}(p, T) 
 \end{align}
 for all $t \in (0, \widehat{T})$. An ODE comparison argument then yields 
 \begin{align*}
     & \int_\Omega e^{\chi_E v} a^p + \int_\Omega e^{\chi_M v} u^p
    + \int_\Omega e^{-\chi_F v} w^p \\[5pt]
    & \leq \max \bigg\{K_{15}(p, T), \int_\Omega e^{-(p - 1)\chi_E v_0} \cdot \left(c^E_0\right)^p + \int_\Omega e^{-(p - 1)\chi_M v_0} \cdot \left(c^M_0\right)^p + \int_\Omega e^{(p - 1)\chi_F v_0} \cdot \left(c^F_0\right)^p\bigg\} \\[5pt]
    & =: K^\ast(p, T)
 \end{align*}
 for all $t \in (0, \widehat{T})$. By applying the $L^p$-$L^q$ estimates for the Neumann heat semigroup \cite[Lemma 1.3]{winkler_aggregation_2010} to the fourth equation in \eqref{sys1}, and in view of \eqref{zeta}, \eqref{v_es1}, and the $L^p$ estimate for $c^F$, we obtain a constant $K_{16}(T)>0$ such that
\begin{align}
\label{ceinf}
\|b(\cdot, t)\|_{L^\infty(\Omega)} \leq K_{16}(T) \quad \text{for all}~ t \in (0, \widehat{T}).
\end{align}
Similarly, applying the same semigroup estimates to the fifth equation in \eqref{sys1}, now using the $L^p$ estimates for both $c^E$ and $c^F$, yields a constant $K_{17}(T)>0$ such that
\begin{align}
\label{ceinf2}
\|m(\cdot, t)\|_{L^\infty(\Omega)} \leq K_{17}(T) \quad \text{for all}~ t \in (0, \widehat{T}).
\end{align}
We can now adapt the well-established Moser-Alikakos iteration technique to the present system to upgrade the $L^p$ estimates for $a$, $u$, and $w$ to $L^\infty$ bounds. In view of \eqref{ceinf}-\eqref{ceinf2}, there exist constants $K_{19},K_{20}>0$, independent of $p$, and, upon setting $K_{18}:=\min\{1,e^{-\chi_F}\}$, we have
  \begin{align}
 \label{inew2w}
    \nonumber & \frac{d}{dt}\bigg\{ \int_\Omega e^{\chi_E v} a^p
   +\int_\Omega e^{\chi_M v} u^p
    + \int_\Omega e^{-\chi_F v} w^p \bigg\} + \bigg\{\int_\Omega e^{\chi_E v} a^p +
   \int_\Omega e^{\chi_M v} u^p
    + \int_\Omega e^{-\chi_F v} w^p \bigg\} \\[5pt]
     & \leq\; - 2 K_{18} \left\{\int_\Omega |\nabla a^{\frac{p}{2}}|^2 + \int_\Omega |\nabla u^{\frac{p}{2}}|^2 + \int_\Omega |\nabla w^{\frac{p}{2}}|^2\right\} + p K_{19}\left\{\int_\Omega  a^{p+1} + \int_\Omega u^{p + 1} + \int_\Omega w^{p + 1}\right\} + K_{20}
\end{align}
for all $p \geq 4$ and $t \in (0, \widehat{T})$. Since $2 \leq \frac{2(p+1)}{p} \leq \frac52$ for $p \geq 4$, we apply the following Gagliardo-Nirenberg inequality \cite[pp.~110]{li2021boundedness}:
\begin{align}
    \label{Gag_Nin11}
    \|\varphi\|_{L^{\frac{2(p+1)}{p}}(\Omega)}
    \leq C_{\mathrm{GN}}^{(2)} \|\nabla \varphi\|_{L^2(\Omega)}^{\frac{p+2}{2(p+1)}} \; \cdot \|\varphi\|_{L^1(\Omega)}^{\frac{p}{2(p+1)}}
    + C_{\mathrm{GN}}^{(2)} \|\varphi\|_{L^1(\Omega)}
    \quad \text{for all } \varphi \in W^{1,2}(\Omega)
\end{align}
 with some constant $C_{\mathrm{GN}}^{(2)} = C_{\mathrm{GN}}^{(2)}(\Omega) > 0$, in conjunction with Young's inequality, to have
\begin{align}
\label{oiu}
    \nonumber p K_{19}\int_\Omega a^{p+1}
    &= K_{19} \|a^{\frac{p}{2}}\|_{L^{\frac{2(p+1)}{p}}(\Omega)}^{\frac{2(p+1)}{p}} \\[5pt]
    \nonumber &\leq p K_{19}\left(
    C_{\mathrm{GN}}^{(2)} \|\nabla a^{\frac{p}{2}}\|_{L^2(\Omega)}^{\frac{p+2}{2(p+1)}} \; \cdot
    \|a^{\frac{p}{2}}\|_{L^1(\Omega)}^{\frac{p}{2(p+1)}}
    + C_{\mathrm{GN}}^{(2)} \|a^{\frac{p}{2}}\|_{L^1(\Omega)}
    \right)^{\frac{2(p+1)}{p}} \\[5pt]
    \nonumber &\leq 8 p K_{19} (C_{\mathrm{GN}}^{(2)})^3
    \|\nabla a^{\frac{p}{2}}\|_{L^2(\Omega)}^{\frac{p+2}{p}} \; \cdot
    \|a^{\frac{p}{2}}\|_{L^1(\Omega)}
    + 8p K_{19} (C_{\mathrm{GN}}^{(2)})^3
    \|a^{\frac{p}{2}}\|_{L^1(\Omega)}^{\frac{2(p+1)}{p}} \\[5pt]
    &\leq K_{18} \|\nabla a^{\frac{p}{2}}\|_{L^2(\Omega)}^2
    + K_{21} p^4 \max \left\{1, \|a^{\frac{p}{2}}\|_{L^1(\Omega)}\right\}^{\frac{2p}{p-2}},
\end{align}
where we have used the fact that
$$
\frac{2(p+1)}{p} \leq \frac{2p}{p-2} \leq 4 ~\text{for any}~ p \geq 4.
$$
Similarly, we obtain
\begin{align}
\label{aa87}
 & pK_{19} \int_\Omega u^{p+1}
    \leq K_{18} \|\nabla u^{\frac{p}{2}}\|_{L^2(\Omega)}^2
    + K_{21} p^4 \max \left\{1, \|u^{\frac{p}{2}}\|_{L^1(\Omega)}\right\}^{\frac{2p}{p-2}}, \\[5pt]
    \label{opi}
    &  pK_{19} \int_\Omega w^{p+1}
    \leq K_{18} \|\nabla w^{\frac{p}{2}}\|_{L^2(\Omega)}^2
    + K_{21} p^4 \max \left\{1, \|w^{\frac{p}{2}}\|_{L^1(\Omega)}\right\}^{\frac{2p}{p-2}}.
\end{align}
Substituting \eqref{oiu}-\eqref{opi} into \eqref{inew2w}, we obtain, for some $C_{28} > 0$,
\begin{align}
    \label{pkj}
    \nonumber &\frac{d}{dt}\bigg\{ \int_\Omega e^{\chi_E v} a^p
   +\int_\Omega e^{\chi_M v} u^p
    + \int_\Omega e^{-\chi_F v} w^p \bigg\} + \bigg\{\int_\Omega e^{\chi_E v} a^p +
   \int_\Omega e^{\chi_M v} u^p
    + \int_\Omega e^{-\chi_F v} w^p \bigg\} \\[5pt]
    & \leq K_{22} p^4 \max\left\{1, \|a^{\frac{p}{2}}\|_{L^1(\Omega)} + \|u^{\frac{p}{2}}\|_{L^1(\Omega)} + \|w^{\frac{p}{2}}\|_{L^1(\Omega)}\right\}^{\frac{2p}{p-2}}
\end{align}
for all $t \in (0, \widehat{T})$. Let $p_j = 4 \cdot 2^j$ and define
$$
M_j := \max \left\{1, \sup_{t \in (0, \widehat{T})}
\left(
\int_\Omega a^{p_j}(\cdot,t) + \int_\Omega u^{p_j}(\cdot,t) + \int_\Omega w^{p_j}(\cdot,t)
\right)\right\}
\quad \text{for } j=0,1,2,\ldots.
$$
Then \eqref{pkj} implies that for all $j = 1,2,\ldots$,
\begin{align}
    \label{pkj1}
    \nonumber  \frac{d}{dt}\bigg\{ \int_\Omega e^{\chi_E v} a^{p_j} + 
   \int_\Omega e^{\chi_M v} u^{p_j}
   & + \int_\Omega e^{-\chi_F v} w^{p_j} \bigg\} \\[5pt]
   & + \bigg\{ \int_\Omega e^{\chi_E v} a^{p_j} + 
   \int_\Omega e^{\chi_M v} u^{p_j}
    + \int_\Omega e^{-\chi_F v} w^{p_j} \bigg\}
    \leq K_{22} p_j^4 M_{j-1}^4
\end{align}
for all $t \in (0,\widehat{T})$. Hence, \eqref{pkj1} entails the existence of $B > 1$, independent of $j$, such that
$$
M_j \leq \max\left\{
B^j M_{j-1}^4,\,
|\Omega|\left(\|a_0\|_{L^\infty(\Omega)}^{p_j} + \|u_0\|_{L^\infty(\Omega)}^{p_j}
+ \|w_0\|_{L^\infty(\Omega)}^{p_j}\right)
\right\}
$$
for all $j \geq 1$. Therefore, by a standard recursive argument (see, e.g., \cite{tao2014energy, pang2018global}), we distinguish two cases.

If
\begin{align*}
B^j M_{j-1}^4 \leq |\Omega|\left(\|a_0\|_{L^\infty(\Omega)}^{p_j} + \|u_0\|_{L^\infty(\Omega)}^{p_j} +\|w_0\|_{L^\infty(\Omega)}^{p_j}\right) \quad \text{for infinitely many } j\geq 1.
\end{align*}
Then one directly obtains a uniform bound for $M_j^{\frac{1}{p_j}}$. In the opposite case, there exists a constant $K_{23}>0$ such that
$$
M_j^{\frac{1}{p_j}}\leq K_{23}
\qquad \text{for all } j\geq 1.
$$
Hence, in either case, $M_j^{\frac{1}{p_j}}$ remains bounded, and passing to the limit as $j\to\infty$, we readily obtain \eqref{c3llnfty12}-\eqref{c2llnfty12}.
\end{proof}

We are now in a position to establish the main result of this section asserting 2D global classical solvability for \eqref{model1}-\eqref{model3}.

\begin{proof}[Proof of Theorem \ref{2D_Class_Theorem}]
In view of the extensibility criterion \eqref{extensibility_criterion2} in Lemma \ref{extensibility_imp}, it suffices to invoke the conclusion of Lemma \ref{lemma_ablinfty} to obtain the desired result.
\end{proof}


\section{Global classical solutions to \texorpdfstring{\eqref{model3w}, \eqref{model2} and \eqref{model3}}{the system} in a three-dimensional domain}
\label{sec4}
In this section, we explore global classical solutions to \eqref{model1}-\eqref{model3} in a three-dimensional domain. To facilitate analysis in this higher-dimensional setting, we assume the EMT and MET rates in \eqref{model1} are positive constants, denoted by $\alpha_1>0$ and $\alpha_2>0$, respectively. Thus, we assume that phenotypic transitions occur at fixed rates, independently of TGF-$\beta$. This allows us to retain the mathematical features of bidirectional EMT-MET transitions while eliminating the additional coupling introduced by the transition functions $\zeta_1$ and $\zeta_2$, which constitute the only dependence on TGF-$\beta$ in \eqref{model1}. Consequently, we can omit the equation for $b$, and the system reduces to the following form, which will serve as the basis for the analysis in this section.
\begin{equation}
\label{model3w}
\begin{cases}
\partial_t c^{E}
 = \Delta c^{E} -\chi_{E}\nabla\cdot\!\big(c^{E}\nabla v\big)
-\alpha_1\,c^{E} + \alpha_2\,c^{M} +\,\rho_{E}\,c^{E}\big(1-c^{E}-c^{M}-c^{F}-v\big),
& x\in\Omega,\ t>0,
\\[7pt]
\partial_t c^{M}
 = \Delta c^{M}
-\chi_{M}\nabla\cdot\!\big(c^{M}\nabla v\big)
+\alpha_1 \,c^{E} - \alpha_2\,c^{M},
& x\in\Omega,\ t>0,
\\[7pt]
\partial_t c^{F}
 = \Delta c^{F}
+\chi_{F}\nabla\cdot\!\big(c^{F}\nabla v\big) +\rho_{F}\,(1+c^{E})\,c^{F}\big(1-c^{E}-c^{M}-c^{F}-v\big)
-\lambda_{F}\,c^{F},
&x\in\Omega,\ t>0,
\\[7pt]
\partial_t m
= \Delta m+\rho_{m}^{F}\,c^{F}+\rho_{m}^{E}\,c^{E}-\lambda_{m}\,m,
& x\in\Omega,\ t>0,
\\[7pt]
\partial_t v
= \rho_{v}\,c^{F}\big(1 - v\big) - \delta_1 mv - \delta_2 c^E v - \delta_3 c^M v,
& x\in\Omega,\ t>0,  
\end{cases}
\end{equation}
subject to the boundary conditions \eqref{model2} and satisfying the initial conditions \eqref{model3}. 

\begin{remark}
\label{rem_delta3}
The restriction $\delta_3=0$ in Theorem~\ref{3D_Class_Theorem} is not required for the derivation of the weighted $L^2$-estimates for the solution components $a$ and $w$ and, consequently, $L^p$ estimates for the same. Indeed, these estimates remain valid for $\delta_3>0$, provided that the additional condition
\begin{align*}
    \delta_3\chi_E< \frac{\rho_E}{2}
\end{align*}
is satisfied. The assumption $\delta_3=0$ is imposed only at the subsequent stage, where we derive higher-order $L^p$-estimates for $u$. 
\end{remark}
We first establish the most crucial estimate ensuring an a priori $L^2$-bound for $a$ and $w$.
\begin{lemma}
    \label{3d_lemma1}
  Let $T>0$. Suppose that all the assumptions of Theorem~\ref{3D_Class_Theorem}, except for the restriction $\delta_3=0$, are satisfied, and assume in addition the condition stated in Remark~\ref{rem_delta3}. Then there exists a constant $C(T)>0$ such that
 \begin{align}
    \label{uwsql2}
    \|a(\cdot,t)\|_{L^2(\Omega)} + \|w(\cdot,t)\|_{L^2(\Omega)}
   \leq C(T)
   \qquad \text{for all } t\in(0,\widehat{T}),
   \end{align}
     where $\widehat{T}:=\min\{T,T_{\max}\}$.
    \end{lemma}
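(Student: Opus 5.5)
The plan is to test the transformed equations in \eqref{sys1} (with $\zeta_1\equiv\alpha_1/\mu_E$ and $\zeta_2\equiv\alpha_2/\mu_M$, $b$ removed) against weights adapted to their divergence structure. I would track
\begin{align*}
y(t):=\int_\Omega e^{\chi_E v}a^2+\int_\Omega e^{-\chi_F v}w^2 .
\end{align*}
Testing the $a$-equation by $2e^{\chi_E v}a$ and using $\partial_t e^{\chi_E v}=\chi_E e^{\chi_E v}F_6$ gives
\begin{align*}
\frac{d}{dt}\int_\Omega e^{\chi_E v}a^2=-2\int_\Omega e^{\chi_E v}|\nabla a|^2+2\int_\Omega e^{\chi_E v}aF_1+\chi_E\int_\Omega e^{\chi_E v}a^2F_6 .
\end{align*}
The analogous identity for $w$ with weight $e^{-\chi_F v}$ contains $-\chi_F\int_\Omega e^{-\chi_F v}w^2F_6$. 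The whole point is to group the cubic reaction terms so that the logistic damping dominates them, using \eqref{v_es1} throughout.

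For the $a$-part I group the terms as follows.
\begin{itemize}
\item The $\delta_2$-contributions are $2\delta_2\chi_E e^{2\chi_E v}a^3v$ from $F_1$ and $-\delta_2\chi_E e^{2\chi_E v}a^3v$ from $F_6$. Their net is at most $\delta_2\chi_E\int_\Omega e^{2\chi_E v}a^3$. The logistic term supplies $-2\rho_E\int_\Omega e^{2\chi_E v}a^3$, and $\chi_E<\rho_E/(2\delta_2)$ leaves a strictly negative multiple $-\kappa\int_\Omega e^{2\chi_E v}a^3$ in reserve.
\item The $\delta_3$-terms net to $\delta_3\chi_E\int_\Omega e^{(\chi_E+\chi_M)v}a^2uv$. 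This is dominated by the competition term $-2\rho_E\int_\Omega e^{(\chi_E+\chi_M)v}a^2u$ thanks to $\delta_3\chi_E<\rho_E/2$, leaving $-\kappa\int_\Omega e^{(\chi_E+\chi_M)v}a^2u$.
\item The $w$-terms net to $(\chi_E\rho_v-2\chi_E\rho_v)\int_\Omega e^{(\chi_E-\chi_F)v}a^2w(1-v)\le 0$.
\item The $m$-terms net to $\delta_1\chi_E\int_\Omega e^{\chi_E v}a^2mv$. By Young this is $\le \frac{\kappa}{2}\int_\Omega a^3+C\int_\Omega m^3$. In three dimensions $\int_\Omega m^3$ is bounded on $(0,\widehat T)$ by Lemma~\ref{lemma_mess}, Lemma~\ref{lemma_mbound1w} and $W^{1,2}\hookrightarrow L^6$.
\item The MET transfer term $2\alpha_2\int_\Omega e^{\chi_M v}au$ is the delicate one, since $u$ is only known in $L^1$. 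I would use $au\le \varepsilon a^2u+\frac{u}{4\varepsilon}$ and absorb the first part into the reserved $-\kappa\int_\Omega e^{(\chi_E+\chi_M)v}a^2u$. The remainder is bounded via Corollary~\ref{L1_z}.
\item Quadratic terms such as $(2\rho_E+\chi_E\rho_v)\int_\Omega e^{\chi_E v}a^2$ are bounded by $\frac{\kappa}{4}\int_\Omega a^3+C$.
\end{itemize}

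For the $w$-part I proceed in the same way.
\begin{itemize}
\item The $\rho_v$-cubic terms are $2\chi_F\rho_v e^{-2\chi_F v}w^3(1-v)$ from $F_3$ and $-\chi_F\rho_v e^{-2\chi_F v}w^3(1-v)$ from $F_6$. Their net $\chi_F\rho_v\int_\Omega e^{-2\chi_F v}w^3$ is beaten by $-2\rho_F\int_\Omega e^{-2\chi_F v}w^3$, because $\chi_F<\rho_F/\rho_v$.
\item The $\delta_i$-terms (with $\delta_3$) all net to $-\chi_F\delta_i(\cdots)\le0$.
\item The enhanced-proliferation term is $2\rho_F\int_\Omega e^{-\chi_F v}w\,c^Ec^F$. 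I handle it via $c^Ec^F\le\varepsilon c^E(c^F)^2+\frac{c^E}{4\varepsilon}$, with the first part absorbed by the damping $-2\rho_F\int_\Omega e^{-\chi_F v}w\,c^E(c^F)^2$. What remains is controlled by the $L^1$ bounds \eqref{L1ess}, after writing $w\,c^E\lesssim$ cubic-absorbable terms plus $L^1$ terms by Young.
\item The mixed term $2\rho_F\int_\Omega e^{-\chi_F v}w^2\cdot(\text{const})$ is handled like the quadratic terms above.
\end{itemize}

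Dropping the nonnegative gradient terms and adding $y$ to both sides (using $y\le C\int_\Omega(a^3+w^3)+C$), I expect to reach $y'(t)+y(t)\le C(T)$ on $(0,\widehat T)$. An ODE comparison then bounds $y$, and \eqref{v_es1} converts this into \eqref{uwsql2}. The main obstacle is the careful bookkeeping of the sign-indefinite cubic terms generated by the haptotactic weight through $v_t$. The key is to check that after cancellation only multiples $\delta_2\chi_E$, $\delta_3\chi_E$ and $\chi_F\rho_v$ survive, so that the hypotheses on $\chi_E$, $\chi_F$ and Remark~\ref{rem_delta3} give strict dominance with a reserve large enough to absorb the $m$- and $u$-coupling terms.
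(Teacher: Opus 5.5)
Your proposal is correct and is essentially the paper's argument. It uses the same weighted functional $\int_\Omega e^{\chi_E v}a^2+\int_\Omega e^{-\chi_F v}w^2$, the same cancellation of the $v_t$-generated cubic terms against logistic damping under $\delta_2\chi_E<\rho_E/2$, $\delta_3\chi_E<\rho_E/2$ and $\rho_v\chi_F<\rho_F$, the same Young splitting of the MET term $2\alpha_2\int_\Omega e^{\chi_M v}au$ against the competition term $-2\rho_E\int_\Omega e^{(\chi_E+\chi_M)v}a^2u$, an $L^3$ bound for $m$ from \eqref{mes1} and \eqref{lemma_mnab}, and an ODE comparison. Your treatment of the ``enhanced-proliferation'' term is unnecessary: since $(1+c^E)(1-c^E)=1-(c^E)^2$, the contribution $2\rho_F\int_\Omega w\,c^Ec^F$ cancels identically, so no cross-absorption into the $a$-reserve is needed, although your handling of it is harmless.
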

\begin{proof}
  In view of the above discussion, we have from the $a$ equation
  \begin{align}
  \label{9u6}
  \nonumber & \frac{d}{dt} \int_\Omega e^{\chi_E v} a^2 = 2 \int_\Omega e^{\chi_E v} a a_t + \chi_E \int_\Omega e^{\chi_E v} a^2 v_t \\[5pt]
  \nonumber & = 2 \int_\Omega e^{\chi_E v} a \big\{ e^{-\chi_E v} \nabla \cdot \big(e^{\chi_E v} \nabla a\big) -\alpha_1 a + \alpha_2 e^{(\chi_M - \chi_E) v} u - \rho_v \chi_E e^{- \chi_F v} a w (1 - v) \\[5pt]
  \nonumber & + \rho_E a\left(1 - e^{\chi_E v} a - e^{\chi_M v} u - e^{- \chi_F v} w - v\right) + \delta_1 \chi_E a m v + \delta_2 \chi_E e^{\chi_E v} a^2 v + \delta_3 \chi_E e^{\chi_M v} a u v\big\} \\[5pt] 
  & + \chi_E \int_\Omega e^{\chi_E v} a^2\big\{\rho_v e^{-\chi_F v} w \big(1 - v\big) - \delta_1 m v - \delta_2 e^{\chi_E v} a v - \delta_3 e^{\chi_M v} uv\big\} 
  \end{align}
for all $t \in (0, T_{\max})$. To estimate the right-hand side, we treat each term separately. We first turn to
$2\alpha_2\int_\Omega e^{\chi_M v}au$, an application of Young's inequality in view of \eqref{L1ess} results in:
\begin{align}
    \label{eo8}
    \nonumber 2\alpha_2\int_\Omega e^{\chi_M v}au & \leq \rho_E \int_\Omega e^{(\chi_E + \chi_M)v} a^2 u + \frac{\alpha_2^2}{\rho_E} \int_\Omega e^{(\chi_M - \chi_E)v} u  \qquad \text{for all}~ t > 0 \\[5pt]
    & \leq \rho_E \int_\Omega e^{(\chi_E + \chi_M)v} a^2 u + \frac{\alpha_2^2}{\rho_E} \Lambda_M u^\ast(T) \qquad \text{for all}~ t \in (0, \widehat{T}). 
\end{align}
As straightforward consequences of \eqref{v_es1} we have the following three potentially unfavorable terms
\begin{align}
    \label{ob1}
    2 \delta_1 \chi_E \int_\Omega e^{\chi_E v} a^2 m v - \delta_1 \chi_E \int_\Omega e^{\chi_E v} a^2 m v = \delta_1 \chi_E \int_\Omega e^{\chi_E v} a^2 m v \leq \delta_1 \chi_E \int_\Omega e^{\chi_E v} a^2 m, \\[5pt]
    \label{ob2}
    2 \delta_2 \chi_E \int_\Omega e^{2\chi_E v} a^3 v - \delta_2 \chi_E \int_\Omega e^{2\chi_E v} a^3 v =  \delta_2 \chi_E \int_\Omega e^{2\chi_E v} a^3 v \leq  \delta_2 \chi_E \int_\Omega e^{2\chi_E v} a^3, \\[5pt]
    \label{ob3}
    2 \delta_3 \chi_E \int_\Omega e^{(\chi_E + \chi_M) v} a^2 u v - \delta_3 \chi_E \int_\Omega e^{(\chi_E + \chi_M) v} a^2 u v \leq  \delta_3 \chi_E \int_\Omega e^{(\chi_E + \chi_M) v} a^2 u
\end{align}
for all $t \in (0, T_{\max})$, and we also have
\begin{align}
    \label{po88}
    \nonumber & - 2 \rho_v \chi_E \int_\Omega e^{(\chi_E - \chi_F) v} a^2 w (1 - v) + \rho_v \chi_E \int_\Omega e^{(\chi_E - \chi_F) v} a^2 w (1 - v)  \\[5pt]
    & = -\rho_v \chi_E \int_\Omega e^{(\chi_E - \chi_F) v} a^2 w (1 - v) \leq 0
\end{align}
for all $t \in (0, T_{\max})$. We next estimate the term in \eqref{ob1}. An application of Young's inequality yields
\begin{align}
    \label{m1ess2}
    \nonumber \delta_1 \chi_E \int_\Omega e^{\chi_E v} a^2 m & = \int_\Omega \left(\frac{3}{4}\right)^{\frac{2}{3}} \rho_E^{2/3}e^{\frac{4}{3}\chi_E v}a^2 \times  \left(\frac{4}{3}\right)^{\frac{2}{3}} \frac{\delta_1 \chi_E}{\rho_E^{2/3}} e^{-\frac{\chi_E}{3} v} m \\[5pt]
    & \leq \frac{\rho_E}{2} \int_\Omega e^{2 \chi_E v} a^3 + \underbrace{\frac{16  \delta_1^3 \chi_E^3}{27\rho_E^2}}_{=:A_1} \int_\Omega e^{-\chi_E v} m^3 
\end{align}
for all $t \in (0, T_{\max})$. We next estimate the second term on the right-hand side of \eqref{m1ess2}. By means of the Gagliardo-Nirenberg inequality  \cite[(4.8)-(4.10)]{dai2022new} with some constant $C_{\mathrm{GN}}^{(3)} = C_{\mathrm{GN}}^{(3)}(\Omega) > 0$, \eqref{mes1} and \eqref{lemma_mnab}, we have
\begin{align}
    \label{mgagn}
    \nonumber A_1\|m\|_{L^3(\Omega)}^3 & \leq A_1 C_{\mathrm{GN}}^{(3)} \|\nabla m\|^{\frac{12}{5}}_{L^2(\Omega)} \|m\|^{\frac{3}{5}}_{L^1(\Omega)} + A_1 C_{\mathrm{GN}}^{(3)} \|m\|^3_{L^1(\Omega)} \\[5pt]
    & \leq A_1 C_{\mathrm{GN}}^{(3)} (M_2^\ast(T))^{\frac{6}{5}} \cdot (M_m(T))^{\frac{3}{5}} + A_1 C_{\mathrm{GN}}^{(3)} (M_m(T))^3 =: M^\ast_3(T)
\end{align}
for all $t \in (0, \widehat{T})$. Inserting \eqref{mgagn} into \eqref{m1ess2} results in 
\begin{align}
    \label{n1ess2}
   \delta_1 \chi_E \int_\Omega e^{\chi_E v} a^2 m \leq \frac{\rho_E}{2} \int_\Omega e^{2 \chi_E v} a^3 + M^\ast_3(T)
\end{align}
for all $t \in (0, \widehat{T})$. We now estimate the term $2\rho_E\int_\Omega e^{\chi_Ev}a^2.$
Using Young's inequality and \eqref{zL1_bound}, we obtain
\begin{align}
    \label{asw33}
    \nonumber 2 \rho_E \int_\Omega e^{\chi_E v} a^2  = \rho_E \int_\Omega e^{\chi_E v} a^{\frac{3}{2}} \times 2 a^{\frac{1}{2}} & \leq \frac{\rho_E}{2} \int_\Omega e^{2 \chi_E v} a^3 + 2 \rho_E \int_\Omega a \\[5pt]
    & \leq \frac{\rho_E}{2} \int_\Omega e^{2 \chi_E v} a^3 + 2 \rho_E a^\ast(T)
\end{align}
for all $t \in (0, \widehat{T})$. Inserting \eqref{ob2}, \eqref{ob3}, \eqref{n1ess2} and \eqref{asw33} into \eqref{9u6} results in 
\begin{align}
    \label{mainu}
    \nonumber & \frac{d}{dt} \int_\Omega e^{\chi_E v} a^2 + 2 \alpha_1 \int_\Omega e^{\chi_E v} a^2 + 2 \int_\Omega e^{\chi_E v} |\nabla a|^2  \leq (\delta_3 \chi_E - \rho_E) \int_\Omega e^{(\chi_E + \chi_M)v} a^2 u \\[5pt]
    & + \left(\delta_2 \chi_E - \frac{\rho_E}{2}\right) \int_\Omega e^{2\chi_E v} a^3 + \underbrace{\frac{\alpha_2^2}{\rho_E} \Lambda_M u^\ast(T) +  M^\ast_3(T) +  2 \rho_E a^\ast(T)}_{=:A_3(T)}
\end{align}
for all $t \in (0, \widehat{T})$. The assumptions $\delta_3\chi_E<\frac{\rho_E}{2}$ and $\delta_2\chi_E<\frac{\rho_E}{2}$ allow us to infer from \eqref{mainu} that
\begin{align}
    \label{lopp}
   \frac{d}{dt} \int_\Omega e^{\chi_E v} a^2 + 2 \alpha_1 \int_\Omega e^{\chi_E v} a^2 + 2 \int_\Omega e^{\chi_E v} |\nabla a|^2  \leq  A_3(T)
\end{align}
for all $t \in (0, \widehat{T})$. Proceeding analogously for the $w$-equation, we compute, 
\begin{align}
    \label{wuooo}
    \nonumber & \frac{d}{dt} \int_\Omega e^{-\chi_F v} w^2 + \int_\Omega e^{-\chi_F v} w^2   = 2 \int_\Omega e^{-\chi_F v} w w_t - \chi_F \int_\Omega e^{-\chi_F v} w^2 v_t + \int_\Omega e^{-\chi_F v} w^2 \\[5pt]
    \nonumber & = 2  \int_\Omega e^{-\chi_F v} w \Big\{ e^{\chi_F v} \nabla \cdot \big(e^{-\chi_F v} \nabla w\big) + \rho_F(1+e^{\chi_E v}a)w\big(1-e^{\chi_E v}a-e^{\chi_M v}u-e^{-\chi_F v}w-v\big)\\[5pt] 
    \nonumber &  -\lambda_F w -\chi_F\delta_1 mvw  -\chi_F\delta_2 e^{\chi_E v}avw 
        -\chi_F\delta_3 e^{\chi_M v}uvw +\chi_F\rho_v e^{-\chi_F v}w^2\big(1-v\big)\Big\} \\[5pt]
    & - \chi_F \int_\Omega e^{-\chi_F v} w^2 \big\{\rho_v e^{-\chi_F v} w \big(1 - v\big) - \delta_1 m v - \delta_2 e^{\chi_E v} a v - \delta_3 e^{\chi_M v} uv\big\} + \int_\Omega e^{-\chi_F v} w^2
\end{align}
for all $t \in (0, T_{\max})$. Using the positivity of the solution components, we readily have
\begin{align}
    \label{1we}
-2 \delta_1 \chi_F \int_\Omega e^{-\chi_F v} m v w^2 + \delta_1 \chi_F \int_\Omega e^{-\chi_F v} m v w^2  = - \delta_1 \chi_F \int_\Omega e^{-\chi_F v} m v w^2  \leq 0, \\[5pt]
   \label{1we1}
- 2 \delta_2 \chi_F \int_\Omega e^{(\chi_E - \chi_F)v} a v w^2 + \delta_2 \chi_F \int_\Omega e^{(\chi_E - \chi_F)v} a v w^2 = - \delta_2 \chi_F \int_\Omega e^{(\chi_E - \chi_F)v} a v w^2 \leq 0, \\[5pt]
   \label{1w77}
- 2 \delta_3 \chi_F \int_\Omega e^{(\chi_M - \chi_F) v} u vw^2 + \delta_3 \chi_F \int_\Omega e^{(\chi_M - \chi_F) v} u vw^2 = - \delta_3 \chi_F \int_\Omega e^{(\chi_M - \chi_F) v} u vw^2 \leq  0
\end{align}
for all $t \in (0, T_{\max})$. Again, as a straightforward consequence of \eqref{v_es1} we have     
\begin{align}
    \label{up77}
    \nonumber 2\rho_v \chi_F \int_\Omega e^{-2 \chi_F v} w^3(1-v) - \rho_v \chi_F \int_\Omega e^{-2 \chi_F v} w^3(1-v) & = \rho_v \chi_F \int_\Omega e^{-2 \chi_F v} w^3(1-v) \\[5pt]
    & \leq \rho_v \chi_F \int_\Omega  e^{-2 \chi_F v} w^3
\end{align}
for all $t \in (0, T_{\max})$. Inserting \eqref{1we}-\eqref{up77} into \eqref{wuooo} results in 
\begin{align}
    \label{weqz1}
    \nonumber& \frac{d}{dt} \int_\Omega e^{-\chi_F v} w^2 + \int_\Omega e^{-\chi_F v} w^2 + 2 e^{-\chi_F} \int_\Omega |\nabla w|^2 \leq (2 \rho_F + 1) \int_\Omega e^{-\chi_F v} w^2 +  \rho_v \chi_F \int_\Omega  e^{-2 \chi_F v} w^3 \\[5pt]
    \nonumber & -2\rho_F\int_\Omega e^{(2\chi_E-\chi_F)v}a^2w^2  -2\rho_F\int_\Omega e^{(\chi_M-\chi_F)v}uw^2 -2\rho_F\int_\Omega e^{(\chi_E+\chi_M-\chi_F)v}auw^2  \\[5pt]
    &-2\rho_F\int_\Omega e^{-2\chi_Fv}w^3 -2\rho_F\int_\Omega e^{(\chi_E-2\chi_F)v}aw^3  -2\rho_F\int_\Omega ve^{-\chi_Fv}w^2-2\rho_F\int_\Omega e^{(\chi_E-\chi_F)v}avw^2
\end{align}
for all $t \in (0, T_{\max})$.  Consider the term $(2 \rho_F + 1) \int_\Omega e^{-\chi_F v} w^2$. Using Young's inequality and \eqref{zL1_bound}, we obtain 
\begin{align}
    \label{w1ess2}
    \nonumber (2 \rho_F + 1) \int_\Omega e^{-\chi_F v} w^2  & =   \int_\Omega \sqrt{2 \rho_F} e^{-\chi_F v} w^{\frac{3}{2}} \times \frac{(2 \rho_F + 1) }{\sqrt{2 \rho_F}} w^{\frac{1}{2}} \\[5pt]
    \nonumber & \leq \rho_F \int_\Omega e^{-2 \chi_F v} w^3 + \frac{(2 \rho_F + 1)^2}{4 \rho_F} \int_\Omega w \\[5pt]
    & \leq \rho_F \int_\Omega e^{-2 \chi_F v} w^3 +  \underbrace{\frac{(2 \rho_F + 1)^2}{4 \rho_F}w^\ast(T)}_{=:A_4(T)}
\end{align}
for all $t \in (0, \widehat{T})$. Combining \eqref{w1ess2} with \eqref{weqz1}, and using the assumption $\rho_v\chi_F<\rho_F$, we infer, after dropping the nonpositive terms, that
\begin{align}
    \label{we33}
    & \frac{d}{dt} \int_\Omega e^{-\chi_F v} w^2 + \int_\Omega e^{-\chi_F v} w^2 + 2 e^{-\chi_F} \int_\Omega |\nabla w|^2 \leq  A_4(T)
\end{align}
for all $t \in (0, \widehat{T})$. On adding \eqref{lopp} and \eqref{we33}, we obtain the following ordinary differential inequality, where $\hat{\alpha}:=\min\{2\alpha_1,1\}$
\begin{align*}
    \frac{d}{dt} \left\{\int_\Omega e^{\chi_E v} a^2 + \int_\Omega e^{-\chi_F v} w^2\right\} + \widehat{\alpha} \left\{\int_\Omega e^{\chi_E v} a^2  + \int_\Omega e^{-\chi_F v} w^2\right\} \leq A_5(T) = A_3(T) + A_4(T)
\end{align*}
for all $t \in (0, \widehat{T})$. An ODE comparison argument then yields
\begin{align*}
    \int_\Omega e^{\chi_E v} a^2 + \int_\Omega e^{-\chi_F v} w^2 \leq A_6(T) := \max \left\{\frac{A_5(T)}{\widehat{\alpha}}, \int_\Omega e^{-\chi_E v_0} \cdot \left(c^E_0\right)^2 + \int_\Omega e^{\chi_F v_0} \cdot \left( c^F_0\right)^2 \right\}
\end{align*}
for all $t \in (0, \widehat{T})$. This readily yields \eqref{uwsql2} in view of \eqref{v_es1}.
\end{proof}
We now derive $L^p(\Omega)$ estimates for $a$ and $w$ for arbitrary $p>1$ by applying standard testing procedures together with suitable interpolation inequalities. This allows us to bootstrap the $L^2(\Omega)$ estimates established in the previous lemma to higher-order $L^p(\Omega)$ bounds.
\begin{lemma}
    \label{3dp_lemma1}
  Let $T>0$. Suppose that all the assumptions of Theorem~\ref{3D_Class_Theorem}, except for the restriction $\delta_3=0$, are satisfied, and assume in addition the condition stated in Remark~\ref{rem_delta3}. Then there exists a constant $C(p, T)>0$ such that for all $p > 1$
 \begin{align}
    \label{awlp}
    \|a(\cdot,t)\|_{L^p(\Omega)} + \|w(\cdot,t)\|_{L^p(\Omega)}
   \leq C(p, T)
   \qquad \text{for all } t\in(0,\widehat{T}),
   \end{align}
     where $\widehat{T}:=\min\{T,T_{\max}\}$.
\end{lemma}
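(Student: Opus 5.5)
We would repeat the weighted testing of Lemma~\ref{3d_lemma1}, now with general exponents: test the $a$-equation in \eqref{sys1} (with $\zeta_i$ replaced by the constants $\alpha_i$) by $e^{\chi_E v}a^{p-1}$ and the $w$-equation by $e^{-\chi_F v}w^{p-1}$, for $p\ge 2$. The case $p\in(1,2)$ then follows from $p=2$ by H\"older's inequality. This gives
\begin{align*}
\frac{d}{dt}\int_\Omega e^{\chi_E v}a^p+\frac{4(p-1)}{p}\int_\Omega e^{\chi_E v}|\nabla a^{\frac p2}|^2
\le (p-1)\Big(\delta_2\chi_E-\rho_E\Big)\int_\Omega e^{2\chi_E v}a^{p+1}+(p-1)\delta_1\chi_E\Lambda_E\int_\Omega a^p m+\dots
\end{align*}
The favourable logistic term $-p\rho_E\int e^{2\chi_Ev}a^{p+1}$ combines with the cross-diffusion remnant $(p-1)\delta_2\chi_E\int e^{2\chi_Ev}a^{p+1}$. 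It absorbs the latter with a margin of order $p\rho_E/2$ because $\delta_2\chi_E<\rho_E/2$. The same happens with the $\delta_3$ term $(p-1)\delta_3\chi_E\int a^pu$ against $-p\rho_E\int e^{(\chi_E+\chi_M)v}a^pu$, using the condition of Remark~\ref{rem_delta3}.

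The remaining terms are handled as follows. The term $p\alpha_2\int e^{\chi_Mv}a^{p-1}u$ is estimated by Young's inequality as $\varepsilon\int a^{p+1}$ plus $C\int a^{p-1}u\cdot u^{0}$. More carefully, we split it via $a^{p-1}u\le \eta a^{p}u+C_\eta u$, absorbing the first piece into $-p\rho_E\int a^pu$ and bounding the second by $u^\ast(T)$. Lower-order powers $\int a^p$ are absorbed into $\int a^{p+1}$. In the $w$-equation, the cubic-type term $(p-1)\rho_v\chi_F\int e^{-2\chi_Fv}w^{p+1}$ coming from $v_t$ is dominated by $-p\rho_F\int e^{-2\chi_Fv}w^{p+1}$ since $\chi_F<\rho_F/\rho_v$. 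All mixed terms carry a negative sign, exactly as in \eqref{1we}--\eqref{1w77}.

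The main obstacle is the MMP coupling $(p-1)\delta_1\chi_E\int a^pm$, for which we only know $m\in L^\infty(L^1)$ with $\nabla m\in L^\infty(L^2)$, i.e.\ $m\in L^\infty((0,\widehat T);L^6(\Omega))$. Young's inequality with the leftover damping gives $\int a^pm\le \eta\int a^{p+1}+C\int m^{p+1}$. So we must first upgrade $m$ to $L^\infty(L^{q})$ for large $q$. This will be done by bootstrapping: from the $L^2$ bounds \eqref{uwsql2} for $a$, $w$ (hence for $c^E$, $c^F$), Neumann heat semigroup $L^p$–$L^q$ estimates \cite[Lemma 1.3]{winkler_aggregation_2010} applied to the $m$-equation give $m\in L^\infty(L^{q})$ for every $q<\infty$ in three dimensions. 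Indeed, $L^2$ sources yield $m$ bounded in $W^{1,r}$ for $r<6$, and in particular in $L^\infty$. If this step fell short, we would iterate, raising $p$ for $a,w$ in steps and recomputing the bound on $m$.

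With $\int m^{p+1}\le C(p,T)$ in hand, we collect terms as in \eqref{pou6} to get
\[
\frac{d}{dt}\Big\{\int_\Omega e^{\chi_Ev}a^p+\int_\Omega e^{-\chi_Fv}w^p\Big\}+\Big\{\int_\Omega e^{\chi_Ev}a^p+\int_\Omega e^{-\chi_Fv}w^p\Big\}\le C(p,T).
\]
Here $\int a^p$ and $\int w^p$ are added to both sides and bounded by the leftover $\int a^{p+1}$ and $\int w^{p+1}$ damping. An ODE comparison argument together with \eqref{v_es1} then yields \eqref{awlp}.
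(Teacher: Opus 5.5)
Your proposal is correct, but it closes the estimate by a different mechanism from the paper. The shared part: both arguments first derive $\|m(\cdot,t)\|_{L^\infty(\Omega)}\le M_\infty(T)$ from \eqref{uwsql2} via Neumann heat-semigroup estimates (this is \eqref{mlinf}). Both also absorb the $\alpha_2$- and $\delta_3$-terms into $-p\rho_E\int_\Omega e^{(\chi_E+\chi_M)v}a^pu$.

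From there the routes diverge. The paper throws away the logistic damping $-p\rho_E\int_\Omega e^{2\chi_E v}a^{p+1}$ and keeps the haptotactic remnant as a positive term $p\delta_2\chi_E\Lambda_E^2\int_\Omega a^{p+1}$. It then absorbs the whole right-hand side $p\Pi_6\int_\Omega a^{p+1}+p\Pi_4\int_\Omega w^{p+1}$ into the dissipation via the three-dimensional Gagliardo--Nirenberg inequality \eqref{gag_3d33}, whose lower-order norm is exactly the $L^2$ bound \eqref{uwsql2}. You never use the dissipation. Since $(p-1)\delta_2\chi_E-p\rho_E<-\frac{(p+1)\rho_E}{2}$ and $(p-1)\rho_v\chi_F-p\rho_F<-p(\rho_F-\rho_v\chi_F)$, the superlinear damping in $\int_\Omega a^{p+1}$ and $\int_\Omega w^{p+1}$ is of order $p$. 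Once $m$ is bounded, it swallows every remaining term, all of which are of lower order.

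What each route buys: yours is more elementary and essentially dimension-independent, since the $L^2$ lemma enters only through the bound on $m$. The paper's route is insensitive to the sign and size of the coefficients in front of $\int_\Omega a^{p+1}$ and $\int_\Omega w^{p+1}$. It even carries along a cross term $p\rho_F\int_\Omega e^{(\chi_E-\chi_F)v}aw^p$ in \eqref{png}, whose Young splitting yields a coefficient $p\Pi_3$ that may grow exponentially in $p$.

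Your route, by contrast, relies on the sign structure of the $w$-equation, so you should state explicitly why no such positive cross term arises. The reason is the identity $(1+c^E)(1-c^E-c^M-c^F-v)=1-(c^E)^2-c^M-c^F-v-c^E(c^M+c^F+v)$, which has no linear $c^E$ contribution, as seen in \eqref{weqz1}. The equations \eqref{1we}--\eqref{1w77} you cite only cover the $\delta_i$-terms, not the logistic cross terms.
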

\begin{proof}
   An application of the smoothing properties of the Neumann heat semigroup $\{e^{t\Delta}\}_{t\geq0}$ on $\Omega$ (see \cite[Lemma~1.3]{winkler_aggregation_2010}) to the fourth equation in \eqref{model3w} in view of \eqref{uwsql2} immediately yields
\begin{align}
\label{mlinf}
\|m(\cdot,t)\|_{L^\infty(\Omega)}
\leq M_{\infty}(T)
\qquad\text{for all }t\in(0,\widehat{T}).
\end{align}
We can easily calculate from the $a$ equation
\begin{align}
    \label{p111}
    \nonumber & \frac{d}{dt} \int_\Omega e^{\chi_E v} a^p + \int_\Omega e^{\chi_E v} a^p= p \int_\Omega e^{\chi_E v} a^{p - 1} a_t + \chi_E \int_\Omega e^{\chi_E v} a^p v_t + \int_\Omega e^{\chi_E v} a^p\\[5pt]
     \nonumber & = p \int_\Omega e^{\chi_E v} a^{p - 1} \big\{ e^{-\chi_E v} \nabla \cdot \big(e^{\chi_E v} \nabla a\big) -\alpha_1 a + \alpha_2 e^{(\chi_M - \chi_E) v} u  - \rho_v \chi_E e^{- \chi_F v} a w (1 - v)\\[5pt]
  \nonumber & + \rho_E a\left(1 - e^{\chi_E v} a - e^{\chi_M v} u - e^{- \chi_F v} w - v\right)  + \delta_1 \chi_E a m v + \delta_2 \chi_E e^{\chi_E v} a^2 v + \delta_3 \chi_E e^{\chi_M v} a u v\big\} \\[5pt]
  & + \chi_E \int_\Omega e^{\chi_E v} a^p\big\{\rho_v e^{-\chi_F v} w \big(1 - v\big) - \delta_1 m v - \delta_2 e^{\chi_E v} a v - \delta_3 e^{\chi_M v} uv\big\} + \int_\Omega e^{\chi_E v} a^p
\end{align}
for all $t \in (0, T_{\max})$. Consider the term $\alpha_2 p \int_\Omega e^{\chi_M v} a^{p - 1} u$. Thanks to Young's inequality and \eqref{zL1_bound} we have
\begin{align}
    \label{iue1}
    \nonumber \alpha_2 p \int_\Omega e^{\chi_M v} a^{p - 1} u & \leq \frac{p \rho_E}{2} \int_\Omega e^{(\chi_E + \chi_M) v} a^p u + \alpha_2^p \left(\frac{2(p - 1)}{p \rho_E}\right)^{p - 1}\int_\Omega e^{\chi_M v} e^{-(p - 1) \chi_E v} u \\[5pt]
    & \leq \frac{p \rho_E}{2} \int_\Omega e^{(\chi_E + \chi_M) v} a^p u + \alpha_2^p \left(\frac{2(p - 1)}{p \rho_E}\right)^{p - 1} \Lambda_M u^\ast(T)
\end{align}
    for all $t \in (0, \widehat{T})$. The first positive term on the right-hand side of \eqref{iue1} can be controlled by the proliferative term as follows
    \begin{align}
    \label{ie43}
        \nonumber & \frac{p \rho_E}{2} \int_\Omega e^{(\chi_E + \chi_M) v} a^p u +  \alpha_2^p \left(\frac{2(p - 1)}{p \rho_E}\right)^{p - 1}\Lambda_M u^\ast(T) - p\rho_E \int_\Omega e^{(\chi_E + \chi_M) v} a^p u \\[5pt]
        & = -\frac{p \rho_E}{2} \int_\Omega e^{(\chi_E + \chi_M) v} a^p u +  \alpha_2^p \left(\frac{2(p - 1)}{p \rho_E}\right)^{p - 1}\Lambda_M u^\ast(T) 
    \end{align}
    for all $t \in (0, \widehat{T})$. Thanks to \eqref{v_es1}, \eqref{mlinf} and Young's inequality we have
    \begin{align}
        \label{eq22}
        \nonumber (p - 1) \delta_1 \chi_E \int_\Omega e^{\chi_E v} a^p m v & \leq (p - 1) \delta_1 \chi_E M_\infty(T) \Lambda_E \int_\Omega a^p \\[5pt] & \leq  p \int_\Omega a^{p + 1} + \left(\frac{p}{p + 1}\right)^{p}  ((p - 1)\delta_1 \chi_E M_\infty(T) \Lambda_E)^{p + 1}|\Omega|
    \end{align}
    for all $t \in (0, \widehat{T})$. Also we have
    \begin{align}
        \label{opi21}
        (p - 1) \delta_2 \chi_E \int_\Omega e^{2\chi_E v} a^{p + 1} v \leq p \delta_2 \chi_E \Lambda_E^2 \int_\Omega a^{p + 1}
    \end{align}
    for all $t \in (0, T_{\max})$. Again, thanks to \eqref{v_es1} we obtain
    \begin{align*}
        (p - 1) \delta_3 \chi_E \int_\Omega e^{(\chi_E + \chi_M)v} a^p u v \leq  (p - 1) \delta_3 \chi_E \int_\Omega e^{(\chi_E + \chi_M)v} a^p u
    \end{align*}
  for all $t \in (0, T_{\max})$. The above term can be absorbed into a proliferative term by virtue of the assumption $\delta_3\chi_E<\frac{\rho_E}{2}$.
  \begin{align}
  \label{ooo22}
      p\left(\delta_3 \chi_E - \frac{\rho_E}{2}\right)  \int_\Omega e^{(\chi_E + \chi_M)v} a^p u - \delta_3 \chi_E  \int_\Omega e^{(\chi_E + \chi_M)v} a^p u < 0
  \end{align}
 for all $t \in (0, T_{\max})$. Also, we have 
 \begin{align}
 \label{new1}
     -(p - 1) \rho_v \chi_E \int_\Omega e^{(\chi_E - \chi_F)v}a^p w(1 - v) \leq 0
 \end{align}
 for all $t \in (0, T_{\max})$, and 
 \begin{align}
     \label{p9bb}
    (p \rho_E + 1)\int_\Omega e^{\chi_E v} a^p \leq p \int_\Omega a^{p + 1} + \Lambda_E^{p+1}\frac{(p \rho_E + 1)^{p + 1}}{(p + 1)^{p + 1}} |\Omega|
 \end{align}
for all $t \in (0, T_{\max})$. Inserting \eqref{ie43}-\eqref{p9bb} into \eqref{p111}, and neglecting the nonpositive terms, we arrive at
 \begin{align}
     \label{muzz}
      \frac{d}{dt} \int_\Omega e^{\chi_E v} a^p + \int_\Omega e^{\chi_E v} a^p & + \frac{4(p - 1)}{p} \int_\Omega e^{\chi_E v} |\nabla a^{\frac{p}{2}}|^2 \leq p \Pi_1 \int_\Omega a^{p + 1} + \Pi_2(p, T)
\end{align}
 for all $t \in (0, \widehat{T})$, where
 \begin{align*}
     & \Pi_1 := 2 + \delta_2 \chi_E \Lambda_E^2, \\[5pt]
     & \Pi_2(p, T) :=  \left(\frac{p}{p + 1}\right)^{p}  ((p - 1)\delta_1 \chi_E M_\infty(T) \Lambda_E)^{p + 1}|\Omega|  \\[5pt]
     & \hspace{1.5cm}  + \alpha_2^p \left(\frac{2(p - 1)}{p \rho_E}\right)^{p - 1}\Lambda_Mu^\ast(T) + \Lambda_E^{p+1}\frac{(p \rho_E + 1)^{p + 1}}{(p + 1)^{p + 1}} |\Omega|.
 \end{align*}
 Proceeding analogously for the $w$-equation, we compute, 
\begin{align}
    \label{wu98}
    \nonumber & \frac{d}{dt} \int_\Omega e^{-\chi_F v} w^p + \int_\Omega e^{-\chi_F v} w^p  = p \int_\Omega e^{-\chi_F v} w^{p - 1} w_t - \chi_F \int_\Omega e^{-\chi_F v} w^p v_t + \int_\Omega e^{-\chi_F v} w^p \\[5pt]
    \nonumber & = p  \int_\Omega e^{-\chi_F v} w^{p - 1} \Big\{ e^{\chi_F v} \nabla \cdot \big(e^{-\chi_F v} \nabla w\big) + \rho_F(1+e^{\chi_E v}a)w\big(1-e^{\chi_E v}a-e^{\chi_M v}u-e^{-\chi_F v}w-v\big)\\[5pt] 
    \nonumber &  -\lambda_F w -\chi_F\delta_1 mvw  -\chi_F\delta_2 e^{\chi_E v}avw 
        -\chi_F\delta_3 e^{\chi_M v}uvw +\chi_F\rho_v e^{-\chi_F v}w^2\big(1-v\big)\Big\} \\[5pt]
    & - \chi_F \int_\Omega e^{-\chi_F v} w^p \big\{\rho_v e^{-\chi_F v} w \big(1 - v\big) - \delta_1 m v - \delta_2 e^{\chi_E v} a v - \delta_3 e^{\chi_M v} uv\big\} + \int_\Omega e^{-\chi_F v} w^p
\end{align}
for all $t \in (0, T_{\max})$. Clearly, in \eqref{wu98} owing to the positivity of the solution components, we have
\begin{align}
    \label{o988}
    & -(p - 1) \delta_1 \chi_F \int_\Omega e^{-\chi_F v} m v w^p \leq 0, \\[5pt]
    & -(p - 1) \delta_2 \chi_F \int_\Omega e^{(\chi_E - \chi_F) v} a v w^p \leq 0  \qquad \text{and}\\[5pt]
    \label{o988ss}
    & -(p - 1) \delta_3 \chi_F \int_\Omega e^{(\chi_M - \chi_F) v} u v w^p \leq 0
\end{align}
for all $t \in (0, T_{\max})$. Also, we have the following control thanks to \eqref{v_es1} and $\rho_v \chi_F < \rho_F$
\begin{align}
\label{o9bb}
\nonumber & (p-1)\rho_v\chi_F\int_\Omega e^{-2\chi_Fv}w^{p+1}(1-v) -p\rho_F\int_\Omega e^{-2\chi_Fv}w^{p+1} \\[5pt]
&\leq -\Big(p(\rho_F-\rho_v\chi_F)+\rho_v\chi_F\Big) \int_\Omega e^{-2\chi_Fv}w^{p+1} \leq 0.
\end{align}

for all $t \in (0, T_{\max})$. Moreover, we can estimate $ p \rho_F \int_\Omega e^{(\chi_E - \chi_F)v} a w^p $ by means of Young's inequality
\begin{align}
    \label{png}
    p \rho_F \int_\Omega e^{(\chi_E - \chi_F)v} a w^p \leq p \int_\Omega w^{p + 1} + p p^p \frac{1}{(p + 1)^{p + 1}}\rho_F^{p + 1} \Lambda_E^{p + 1} \int_\Omega a^{p + 1}
\end{align}
for all $t \in (0, T_{\max})$. Also, we have
\begin{align}
     \label{99bb}
      & p \rho_F \int_\Omega e^{\chi_F v} w^p \leq  p \rho_F \int_\Omega  w^p \leq p^2 \rho_F \int_\Omega w^{p + 1} + \frac{p \rho_F}{(p + 1)^{p + 1}} |\Omega|, \\[5pt]
     \label{9bb}
     & \int_\Omega e^{-\chi_F v} w^p \leq \int_\Omega w^p \leq p \int_\Omega w^{p + 1} + \frac{1}{(p + 1)^{p + 1}} |\Omega|.
 \end{align}
for all $t \in (0, T_{\max})$. Inserting \eqref{o988}-\eqref{9bb} into \eqref{wu98} results in the following ordinary differential inequality after neglecting the nonpositive terms,
\begin{align}
    \label{wplk}
    \frac{d}{dt} \int_\Omega e^{-\chi_F v} w^p + \int_\Omega e^{-\chi_F v} w^p & + \frac{4(p-1)e^{-\chi_F}}{p}\int_\Omega |\nabla w^{\frac p2}|^2 \leq p \Pi_3 \int_\Omega a^{p + 1} + p \Pi_4 \int_\Omega w^{p + 1} + \Pi_5 
 \end{align}
 for all $t \in (0, T_{\max})$, where 
 \begin{align*}
     & \Pi_3 := p^p \frac{1}{(p + 1)^{p + 1}}\rho_F^{p + 1} \Lambda_E^{p + 1}, \\[5pt]
     & \Pi_4 := p \rho_F + 2, \\[5pt]
     & \Pi_5 := \frac{p \rho_F}{(p + 1)^{p + 1}} |\Omega| + \frac{1}{(p + 1)^{p + 1}} |\Omega|.
 \end{align*}
Combining \eqref{muzz} and \eqref{wplk} results in
 \begin{align}
     \label{mainuw1}
     \nonumber & \frac{d}{dt} \bigg\{\int_\Omega e^{\chi_E v} a^p + \int_\Omega e^{-\chi_F v} w^p\bigg\} + \bigg\{\int_\Omega e^{\chi_E v} a^p + \int_\Omega e^{-\chi_F v} w^p\bigg\} \\[5pt]
     \nonumber & + \frac{4(p - 1)}{p} \int_\Omega e^{\chi_E v} |\nabla a^{\frac{p}{2}}|^2 +  \frac{4(p - 1) e^{-\chi_F}}{p}  \int_\Omega |\nabla w^{\frac p2}|^2 \\[5pt] 
     &\leq   p \Pi_6 \int_\Omega a^{p+1} + p \Pi_4 \int_\Omega w^{p+1} + \Pi_7(p, T)
 \end{align}
 for all $t \in (0, \widehat{T})$, where
 \begin{align*}
      \Pi_6 := \Pi_1 + \Pi_3 \qquad \text{and} \qquad \Pi_7(p, T) := \Pi_2(p, T) + \Pi_5. 
 \end{align*}
 We are now in a position to apply the Gagliardo-Nirenberg inequality \cite[(4.24)]{dai2022new}. Thus, for some constant $C_{\mathrm{GN}}^{(3)}=C_{\mathrm{GN}}^{(3)}(p,\Omega)>0$, and in view of \eqref{uwsql2}, for any $\widehat{\Pi}>0$ we obtain
\begin{align}
\label{gag_3d33}
\nonumber p\widehat{\Pi} \|y\|_{L^{p+1}(\Omega)}^{p+1}
& = p\widehat{\Pi}\|y^{\frac p2}\|_{L^{\frac{2(p+1)}{p}}(\Omega)}^{\frac{2(p+1)}{p}} \\[5pt]
\nonumber &\leq p\widehat{\Pi}C_{\mathrm{GN}}^{(3)}\left(
\|\nabla y^{\frac p2}\|_{L^2(\Omega)}^{\frac{2\theta(p+1)}{p}}
\|y^{\frac p2}\|_{L^{\frac{4}{p}}(\Omega)}^{\frac{2(1-\theta)(p+1)}{p}}
+\|y^{\frac p2}\|_{L^{\frac{4}{p}}(\Omega)}^{\frac{2(p+1)}{p}}
\right) \\[5pt]
&\leq p\widehat{\Pi}\left(\|\nabla y^{\frac p2}\|_{L^2(\Omega)}^{\frac{2\theta(p+1)}{p}}+1\right),
\end{align}
where, $y \in \{a, w\}$ and 
$$
\theta=\frac{p(3p-3)}{(p+1)(3p-2)}\in(0,1),
$$
due to $p>1$, and hence,
$$
\theta\frac{2(p+1)}{p}=2-\frac{2}{3p-2}\in(0,2),
$$
for any $p>1$. By means of Young's inequality, we have the following estimates from \eqref{gag_3d33} for some $\Pi_8(p, T), \Pi_{9}(p, T) > 0$
\begin{align}
    \label{you1}
     & p \Pi_6\int_\Omega a^{p+1} \leq  \frac{4(p-1)}{p}\int_\Omega e^{\chi_E v} |\nabla a^{\frac p2}|^2 + \Pi_8(p, T) \qquad \text{and} \\[5pt]
     \label{you2}
     & 2 p \int_\Omega w^{p+1} \leq  \frac{4(p-1)e^{-\chi_F}}{p}\int_\Omega |\nabla w^{\frac p2}|^2 + \Pi_{9}(p, T),
\end{align}
for all $t \in (0, \widehat{T})$. Inserting \eqref{you1}-\eqref{you2} into \eqref{mainuw1} results in the following differential inequality with $\Pi_{10}(p, T) = \Sigma_{i \in \{7, 8, 9\}} \Pi_i(p, T)$
\begin{align*}
    \frac{d}{dt}\left\{
   \int_\Omega e^{\chi_E v} a^p
    + \int_\Omega e^{-\chi_F v} w^p
    \right\} + \left\{
   \int_\Omega e^{\chi_E v} a^p
    + \int_\Omega e^{-\chi_F v} w^p
    \right\} \leq  \Pi_{10}(p, T)
\end{align*}
 for all $t \in (0, \widehat{T})$.  An ODE comparison argument then yields 
 \begin{align}
     \label{pmain1}
     \int_\Omega e^{\chi_E v} a^p
    + \int_\Omega e^{-\chi_F v} w^p \leq \max \left\{\Pi_{10}(p, T), \int_\Omega e^{-(p - 1)\chi_E v_0} \cdot \left(c^E_0\right)^p  + \int_\Omega e^{(p - 1)\chi_F v_0} \cdot \left(c^F_0\right)^p\right\}
 \end{align}
 for all $t \in (0, \widehat{T})$. The desired estimate \eqref{awlp} now follows directly from combining \eqref{pmain1} and \eqref{v_es1}.
\end{proof}    
We now derive $L^p$ estimates for $u$ for arbitrary $p>2$ under the additional assumption $\delta_3=0$. Since $\rho_E>0$, the condition imposed in Remark~\ref{rem_delta3} holds true. Consequently, the conclusions of Lemmas~\ref{3d_lemma1} and \ref{3dp_lemma1} remain valid and will be used throughout the subsequent analysis.
\begin{lemma}
\label{lemma_uLp}
Let $T>0$. Suppose that all the assumptions of Theorem~\ref{3D_Class_Theorem} are satisfied. Then there exists a constant $C(p, T)>0$ such that for all $p > 2$
\begin{align}
\label{ulp11}
\|u(\cdot,t)\|_{L^p(\Omega)}
\leq C(p,T)
\qquad\text{for all }t\in(0,\widehat{T}),
\end{align}
where $\widehat{T}:=\min\{T,T_{\max}\}$.
\end{lemma}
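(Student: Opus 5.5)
We test the $u$-equation of the transformed system \eqref{sys1}, now with $\zeta_1\equiv\alpha_1/\mu_E$ and $\zeta_2\equiv\alpha_2/\mu_M$ and with $\delta_3=0$, by $e^{\chi_M v}u^{p-1}$. As in \eqref{pu002}, this gives
\begin{align*}
\frac{d}{dt}\int_\Omega e^{\chi_M v}u^p+\frac{4(p-1)}{p}\int_\Omega e^{\chi_M v}|\nabla u^{\frac p2}|^2
&= p\alpha_1\int_\Omega e^{\chi_E v}a u^{p-1}-p\alpha_2\int_\Omega e^{\chi_M v}u^p\\
&\quad+(p-1)\chi_M\int_\Omega e^{\chi_M v}u^p\big(\delta_1 mv+\delta_2 e^{\chi_E v}av\big)\\
&\quad-(p-1)\rho_v\chi_M\int_\Omega e^{(\chi_M-\chi_F)v}u^p w(1-v).
\end{align*}
The point of $\delta_3=0$ is that no superlinear self-interaction $u^{p+1}$ survives. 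Every remaining term is at most linear in $u^p$, multiplied by coefficients built from $m$ and $a$. The last term is nonpositive and is dropped.

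The terms are handled as follows.
\begin{itemize}
\item By \eqref{mlinf}, $m$ is bounded in $L^\infty$, so the $\delta_1$ term is at most $C(T)\,p\int_\Omega u^p$.
\item For the coupling term $p\alpha_1\int_\Omega e^{\chi_E v}au^{p-1}$, Young's inequality gives the bound $C\int_\Omega u^p+C(p)\int_\Omega a^p$. The second integral is bounded by Lemma~\ref{3dp_lemma1}.
\item The genuinely dangerous term is $(p-1)\delta_2\chi_M\Lambda_E\Lambda_M\int_\Omega a u^p$. Lemma~\ref{3dp_lemma1} bounds $a$ in every $L^q$, but not in $L^\infty$. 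We therefore use H\"older with a large exponent $q$ (say $q>3$), obtaining $\int_\Omega au^p\le\|a\|_{L^q}\|u^{p/2}\|^2_{L^{2q'}}$.
\end{itemize}

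Since $2q'<6$, the three-dimensional Gagliardo--Nirenberg inequality interpolates $\|u^{p/2}\|_{L^{2q'}}$ between $\|\nabla u^{p/2}\|_{L^2}$ and $\|u^{p/2}\|_{L^{2/p}}$. The latter equals $(\int_\Omega u)^{p/2}$, which is bounded by Corollary~\ref{L1_z}. The resulting gradient power is strictly less than $2$, so Young's inequality absorbs the term into $\frac{2(p-1)}{p}\int_\Omega e^{\chi_M v}|\nabla u^{p/2}|^2$, leaving a constant $C(p,T)$. The same interpolation, applied to $\int_\Omega u^p$ itself, also absorbs the linear terms.

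After these absorptions we add $\int_\Omega e^{\chi_M v}u^p$ to both sides and bound it the same way. This yields
\[
\frac{d}{dt}\int_\Omega e^{\chi_M v}u^p+\int_\Omega e^{\chi_M v}u^p\le C(p,T)\qquad\text{on }(0,\widehat T).
\]
An ODE comparison and \eqref{v_es1} then give \eqref{ulp11}.

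The main obstacle is the exponent bookkeeping in the $a u^p$ estimate. We must choose $q=q(p)$ so that the Gagliardo--Nirenberg exponent $\theta$ satisfies $\theta\cdot 2<2$ uniformly. Using $L^1$ rather than $L^2$ control for the lower-order norm requires checking $\theta<1$, which holds because $2q'<6$. If this turns out delicate, the fallback is to use $L^{2}$ control of $u$ first (a $p=2$ step) and then bootstrap, mirroring the proof of Lemma~\ref{3dp_lemma1}.
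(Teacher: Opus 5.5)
Your proposal is correct and follows essentially the same route as the paper. Both test the $u$-equation with $e^{\chi_M v}u^{p-1}$, use \eqref{mlinf} for $m$, and bound $\int_\Omega a u^p$ by H\"older with $a$ in a high $L^q$ norm from Lemma~\ref{3dp_lemma1}. Both then interpolate $u^{p/2}$ between $\|\nabla u^{p/2}\|_{L^2(\Omega)}$ and the $L^1$ bound for $u$ (gradient power below $2$), absorb, and close by ODE comparison. The only differences are cosmetic: you split $p\alpha_1\int_\Omega e^{\chi_E v}au^{p-1}$ into $\int_\Omega u^p$ and $\int_\Omega a^p$ instead of folding it into $\int_\Omega au^p$, and you use a fixed H\"older exponent (any $q>3/2$ suffices) in place of the paper's $q=\frac{3p+2}{2}$.
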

\begin{proof}
   We can directly calculate from the $u$ equation
    \begin{align}
    \label{pu00}
     \nonumber & \frac{d}{dt} \int_\Omega e^{\chi_M v} u^p = p \int_\Omega e^{\chi_M v} u^{p - 1} u_t + \chi_M \int_\Omega e^{\chi_M v} u^p v_t   \\[5pt]
     \nonumber & = p \int_\Omega e^{\chi_M v} u^{p - 1} \Big\{ e^{-\chi_M v} \nabla \cdot \big(e^{\chi_M v} \nabla u\big) + \alpha_1 e^{(\chi_E -\chi_M) v} a - \alpha_2 u  \\[5pt]
     \nonumber &  +\delta_1 \chi_M mvu  + \delta_2 \chi_M e^{\chi_E v} a u v -\rho_v \chi_Me^{-\chi_F v} uw \big(1-v\big)\Big\} \\[5pt]
    &+  \chi_M \int_\Omega e^{\chi_M v} u^p \Big\{ \rho_v e^{-\chi_F v} w \big(1 - v\big) - \delta_1 m v - \delta_2 e^{\chi_E v} a v \Big\}
\end{align}
for all $t \in (0, T_{\max})$. Consider the term $p \alpha_1 \int_\Omega e^{\chi_E v} a u^{p - 1}$. Applying Young's inequality in view of \eqref{zL1_bound} results in 
\begin{align}
    \label{uew2}
\nonumber p \alpha_1 \int_\Omega e^{\chi_E v} a u^{p - 1} & \leq p \alpha_1 \int_\Omega  e^{\chi_E v} a u^p + \alpha_1 \left(\frac{p - 1}{p}\right)^{p - 1}  \int_\Omega  e^{\chi_E v} a \\[5pt]
& \leq p \alpha_1 \int_\Omega  e^{\chi_E v} a u^p + \alpha_1 \left(\frac{p - 1}{p}\right)^{p - 1} \Lambda_E a^\ast(T)
\end{align}
for all $t \in (0, \widehat{T})$. Thanks to \eqref{v_es1} and \eqref{mlinf} we have
\begin{align}
    \label{mz1}
    & (p - 1) \delta_1 \chi_M \int_\Omega e^{\chi_M v}m v u^p \leq p \delta_1 \chi_M M_\infty(T) \int_\Omega e^{\chi_M v} u^p
\end{align}
for all $t \in (0, T_{\max})$. Also, we have
\begin{align}
     \label{mz2}
    & (p - 1) \delta_2 \chi_M \int_\Omega e^{(\chi_E + \chi_M) v}a u^p v \leq  p \delta_2 \chi_M \int_\Omega e^{(\chi_E + \chi_M) v}a u^p 
\end{align}
for all $t \in (0, T_{\max})$. Moreover, 
\begin{align}
    \label{mz3}
-(p - 1) \rho_v \chi_M \int_\Omega e^{(\chi_M - \chi_F)v}wu^p(1 - v) \leq 0
\end{align}
for all $t \in (0, T_{\max})$. Combining \eqref{pu00} with \eqref{uew2}-\eqref{mz3}, we infer that,
\begin{align}
    \label{pu999}
     \nonumber \frac{d}{dt} \int_\Omega e^{\chi_M v} u^p + \int_\Omega e^{\chi_M v} u^p & +\frac{4(p - 1)}{p} \int_\Omega e^{\chi_M v} |\nabla u^{\frac{p}{2}}|^2 \\[5pt] & \leq p \Gamma_1 \int_\Omega a u^p + p \Gamma_2(p, T) \int_\Omega u^p + \Gamma_3(p, T)
\end{align}
for all $t \in (0, \widehat{T})$, with
\begin{align*}
&\Gamma_1 := \alpha_1\Lambda_E + \delta_2\chi_M \Lambda_E\Lambda_M, \\[5pt]
&\Gamma_2(p,T) := \delta_1\chi_M \Lambda_M M_\infty(T) + \Lambda_M, \\[5pt]
& \Gamma_3(p,T) := \alpha_1\left(\frac{p-1}{p}\right)^{p-1} \Lambda_E a^{\ast}(T).
\end{align*}
As the spatial dimension is $N=3$ and no $L^2$ estimate is available for $u$, the approach employed in the proof of Lemma~\ref{3dp_lemma1} to derive the $L^p \; (p > 1)$ estimates for $a$ and $w$ is no longer applicable. Instead, we adopt the approach developed in \cite[Lemma~3.6-3.7]{zheng2022global} to establish the corresponding $L^p \; (p > 2)$ estimate for $u$. To this end, with exponents
\begin{align*}
    \frac{Np + 2}{Np} \quad \text{and} \quad \frac{Np + 2}{2},
\end{align*}
we apply the H{\"o}lder's inequality to the first term on the right-hand side of \eqref{pu999} in view of \eqref{awlp}, which yields
\begin{align*}
    p\Gamma_1 \int_\Omega a u^p & \leq  p\Gamma_1 \left(\int_\Omega a^{\frac{Np + 2}{2}}\right)^{\frac{2}{Np + 2}} \cdot \left(\int_\Omega u^{\frac{Np + 2}{N}}\right)^{\frac{Np}{Np + 2}}\\[5pt]
     & \leq \Gamma_4(p, T) \left(\int_\Omega u^{\frac{Np + 2}{N}}\right)^{\frac{Np}{Np + 2}} \\[5pt]
    & = \Gamma_4(p, T) \|u^{\frac{p}{2}}\|^2_{L^{\frac{2(Np + 2)}{Np}}(\Omega)} \qquad \text{for all}~ t \in (0, \widehat{T}). 
\end{align*}
Since $N=3$ and $p>2$, it follows immediately that
\begin{align*}
\frac{2}{p}\leq\frac{2(Np+2)}{Np}\leq\frac{2N}{(N-2)_+}.
\end{align*}
Hence, an application of the Gagliardo-Nirenberg inequality, together with \eqref{zL1_bound}, yields
\begin{align}
    \label{gaga00}
     \nonumber & \Gamma_4(p, T) \|u^{\frac{p}{2}}\|^2_{L^{\frac{2(Np + 2)}{Np}}(\Omega)}  \\[5pt]
     \nonumber & \leq \Gamma_5(p, T) \left(\|\nabla u^{\frac{p}{2}}\|^{2 \frac{Np - \frac{N^2 p}{Np + 2}}{2 - N + Np}}_{L^2(\Omega)} \|u^{\frac{p}{2}}\|^{2 - 2 \frac{Np - \frac{N^2 p}{Np + 2}}{2 - N + Np}}_{L^{\frac{2}{p}}(\Omega)} + \|u^{\frac{p}{2}}\|^2_{L^{\frac{2}{p}}(\Omega)}\right) \\[5pt]
     & \leq  \Gamma_6(p, T)\left(\|\nabla u^{\frac{p}{2}}\|^{2 \frac{Np - \frac{N^2 p}{Np + 2}}{2 - N + Np}}_{L^2(\Omega)} + 1\right)
\end{align}
for all $t \in (0, \widehat{T})$ with some positive constants $\Gamma_5(p, T)$ and $\Gamma_6(p, T)$. For $p > 2$ and $N =3$ we have
\begin{align*}
    2 \frac{Np - \frac{N^2 p}{Np + 2}}{2 - N + Np} < 2.
\end{align*}
Applying Young's inequality to the term on the right-hand side of \eqref{gaga00}, we obtain for some constant $\Gamma_7(p, T) > 0$
\begin{align}
    \label{aauo}
    \Gamma_4(p, T) \|u^{\frac{p}{2}}\|^2_{L^{\frac{2(Np + 2)}{Np}}(\Omega)} \leq \frac{2(p - 1)}{p} \int_\Omega e^{\chi_M v} |\nabla u^{\frac{p}{2}}|^2 + \Gamma_7(p, T) \qquad \text{for all}~ t \in (0, \widehat{T}). 
\end{align}
An analogous argument, together with some simplification, yields
\begin{align}
    \label{up999y}
     p\Gamma_2(p, T) \int_\Omega u^p \leq  \frac{2(p - 1)}{p} \int_\Omega e^{\chi_M v} |\nabla u^{\frac{p}{2}}|^2 + \Gamma_8(p, T) \qquad \text{for all}~ t \in (0, \widehat{T}).
\end{align}
Inserting \eqref{aauo} and \eqref{up999y} into \eqref{pu999} results in the following ordinary differential inequality
\begin{align*}
    \frac{d}{dt} \int_\Omega e^{\chi_M v} u^p + \int_\Omega e^{\chi_M v} u^p \leq  \Gamma_9(p, T)
\end{align*}
for all $t \in (0, \widehat{T})$.  An ODE comparison argument then yields
\begin{align*}
    \int_\Omega e^{\chi_M v} u^p \leq \Gamma_{10}(p, T) := \max \left\{\Gamma_9(p, T), \int_\Omega e^{-(p - 1)\chi_M v_0} \cdot \left(c^M_0\right)^p  \right\}
\end{align*}
for all $t \in (0, \widehat{T})$. This readily yields \eqref{ulp11} in view of \eqref{v_es1}.
\end{proof}
With the above $L^p$ estimates at hand, we are now in a position to apply the Moser-Alikakos iteration technique to obtain the following $L^\infty$ boundedness result. The approach follows that of \cite[Lemma 3.10]{zheng2022global}; however, we include the details here for the sake of completeness.
\begin{lemma}
\label{lemma_linf}
Let $T>0$. Suppose that all the assumptions of Theorem~\ref{3D_Class_Theorem} are satisfied. Then there exists a constant $C(T)>0$ such that
\begin{align}
\label{linf}
\|a(\cdot,t)\|_{L^\infty(\Omega)} + \|u(\cdot,t)\|_{L^\infty(\Omega)} + \|w(\cdot,t)\|_{L^\infty(\Omega)}
\leq C(T)
\qquad\text{for all }t\in(0,\widehat{T}),
\end{align}
where $\widehat{T}:=\min\{T,T_{\max}\}$.
\end{lemma}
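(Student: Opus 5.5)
We plan to run a Moser--Alikakos iteration on the three weighted functionals $\int_\Omega e^{\chi_E v}a^p$, $\int_\Omega e^{\chi_M v}u^p$ and $\int_\Omega e^{-\chi_F v}w^p$. The $L^p$ bounds from Lemmas~\ref{3dp_lemma1} and \ref{lemma_uLp} supply all lower-order control needed. As a first step, we use \eqref{awlp} and \eqref{ulp11} with some fixed large exponent, say $p_0>3$, together with \eqref{mlinf}. From these we note that every coefficient multiplying $a^{p-1}$, $u^{p-1}$, $w^{p-1}$ in the reaction parts of \eqref{sys1} (adapted to \eqref{model3w}) is of the form bounded function times a component. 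We also note that $v_t$ is controlled pointwise by $C(1+a+u+w)$ via \eqref{v_es1} and \eqref{mlinf}.

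Next, we revisit the computations \eqref{p111}, \eqref{wu98} and \eqref{pu00}. This time we keep all constants explicit in $p$, and we bound every positive reaction and $v_t$ contribution crudely by $pK\int_\Omega(a^{p+1}+u^{p+1}+w^{p+1})+pK\int_\Omega(a^p+u^p+w^p)+K$, with $K$ independent of $p$. Cross terms such as $a^{p-1}u$ and $a w^{p}$ are handled with Young's inequality. Sign-favourable terms (for instance the logistic ones, and the $\rho_v\chi_F$ term by $\rho_v\chi_F<\rho_F$) are discarded. Adding the three inequalities, we expect an analogue of \eqref{inew2w}:
\begin{align*}
\frac{d}{dt}Y_p+Y_p\le -2K_{18}\sum_{f\in\{a,u,w\}}\int_\Omega|\nabla f^{\frac p2}|^2+pK\sum_{f\in\{a,u,w\}}\int_\Omega f^{p+1}+K,
\end{align*}
where $Y_p$ denotes the sum of the weighted functionals.

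The main obstacle is absorbing the superlinear terms $\int_\Omega f^{p+1}$ in three dimensions. The idea is to avoid treating them as $(p+1)$-powers. We write $p\int_\Omega a\,f^{p}$ (and similarly for the other cross terms) and use the bound $\|f\|_{L^{p_0}}\le C$ via Hölder: $\int_\Omega f\,g^{p}\le\|f\|_{L^{p_0}}\|g^{p/2}\|_{L^{2p_0'}}^2$ with $2p_0'<6$. The three-dimensional Gagliardo--Nirenberg inequality then gives
\[
\|\varphi\|_{L^{2p_0'}}^2\le C\|\nabla\varphi\|_{L^2}^{2\theta}\|\varphi\|_{L^1}^{2(1-\theta)}+C\|\varphi\|_{L^1}^2,
\]
where $\varphi=g^{p/2}$ and $\theta<1$ is independent of $p$. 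Young's inequality then absorbs the gradient part into $K_{18}\|\nabla g^{p/2}\|_2^2$, at the cost of $Cp^{\gamma}\max\{1,\|g^{p/2}\|_{L^1}\}^2$ with $\gamma$ fixed. Doing this requires that each superlinear term be recast as (bounded-in-$L^{p_0}$ factor)$\times$(component)$^p$. This is possible since every such term in the system is quadratic, so the pure-power terms $a^{p+1}$ and $w^{p+1}$ can be written as $a\cdot a^p$ and $w\cdot w^p$. It is precisely here that the bounds of Lemmas~\ref{3dp_lemma1} and \ref{lemma_uLp} are indispensable.

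Finally, we set $p_j=2^j p_0$ and $M_j:=\max\{1,\sup_{t\in(0,\widehat T)}\sum_{f\in\{a,u,w\}}\int_\Omega f^{p_j}\}$. The ODE comparison then yields $M_j\le\max\{B^jM_{j-1}^2,\;|\Omega|(\|a_0\|_\infty^{p_j}+\|u_0\|_\infty^{p_j}+\|w_0\|_\infty^{p_j})\}$ for some $B>1$ independent of $j$. The standard recursive argument, exactly as at the end of the proof of Lemma~\ref{lemma_ablinfty}, shows that $M_j^{1/p_j}$ stays bounded. Letting $j\to\infty$ gives \eqref{linf}.
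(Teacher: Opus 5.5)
Your argument is correct, but it absorbs the superlinear terms differently from the paper. The paper does not use the uniform $L^{p_0}$ bounds inside the iteration step. It bounds everything by $p\kappa_1\{\int_\Omega a^{p+1}+\int_\Omega u^{p+1}+\int_\Omega w^{p+1}+1\}$ and applies the three-dimensional Gagliardo--Nirenberg inequality directly to $\|f^{p/2}\|_{L^{2(p+1)/p}(\Omega)}$, with $\|f^{p/2}\|_{L^1(\Omega)}$ as the lower-order norm. Because $2(p+1)/p$ stays close to $2$, far below $6$, the gradient part can be absorbed as soon as $p>N$. For $p\ge 2N$ the Young constant is of order $p^{N+2}$. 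The price is that the recursion exponent becomes $\frac{2p_i-N+2}{p_i-N}=2+\epsilon_i$ with $\epsilon_i=\frac{N+2}{p_i-N}$. One therefore needs $\sum_i\epsilon_i<\infty$, i.e. $\prod_i(1+\epsilon_i/2)<\infty$, when taking $p_i$-th roots.

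Your route instead spends the bounds of Lemmas~\ref{3dp_lemma1} and \ref{lemma_uLp} on one factor of each quadratic term via H\"older. Gagliardo--Nirenberg is then used at the fixed exponent $2p_0'<6$ with a $p$-independent $\theta$. This gives a fixed power $p^{1/(1-\theta)}$ and the recursion $M_j\le\max\{B^jM_{j-1}^2,\dots\}$. Exponent exactly $2$ (or $2+\epsilon_j$ with $\sum_j\epsilon_j<\infty$) is precisely what makes the $p_j$-th root argument close when $p_j=2p_{j-1}$, and you avoid both the $\epsilon_i$ bookkeeping and the restriction $p>2N$.

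Your remark that treating the terms as genuine $(p+1)$-powers is ``the main obstacle'' in three dimensions is overstated. The factorization is a convenience, not a necessity. In the paper's route the earlier $L^p$ bounds are needed only to start the iteration, and through \eqref{mlinf} to keep the $m$-terms linear in $p$.

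Two small bookkeeping points:
\begin{itemize}
\item The Young step on $p\alpha_1\int_\Omega e^{\chi_E v}au^{p-1}$ leaves an additive constant of order $p$ rather than a $p$-independent $K$. This is harmless.
\item Your explicit tracking of $p$ is genuinely needed. The Young constants used earlier, such as $\Pi_2(p,T)$ and $\Pi_3$ in the proof of Lemma~\ref{3dp_lemma1}, grow exponentially in $p$ and must be replaced by polynomially growing ones. You can do this, for instance, via $a^p\le a^{p+1}+1$ and $aw^p\le a^{p+1}+w^{p+1}$ with $p$-independent weights.
\end{itemize}
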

\begin{proof}
 As a consequence of the arguments employed in the proofs of Lemmas~\ref{3dp_lemma1} and \ref{lemma_uLp}, we obtain the following differential inequality for some $\kappa_1 > 0$:
 \begin{align}
     \label{inff1}
      \nonumber & \frac{d}{dt} \int_\Omega e^{\chi_E v} a^p + \int_\Omega e^{\chi_E v} a^p + \frac{4(p - 1)}{p} \int_\Omega e^{\chi_E v} |\nabla a^{\frac{p}{2}}|^2 \\[5pt]
      \nonumber & + \frac{d}{dt} \int_\Omega e^{\chi_M v} u^p + \int_\Omega e^{\chi_M v} u^p +\frac{4(p - 1)}{p} \int_\Omega e^{\chi_M v} |\nabla u^{\frac{p}{2}}|^2 \\[5pt]
      \nonumber & + \frac{d}{dt} \int_\Omega e^{-\chi_F v} w^p + \int_\Omega e^{-\chi_F v} w^p + \frac{4(p-1)e^{-\chi_F}}{p}\int_\Omega |\nabla w^{\frac p2}|^2 \\[5pt]
      & \leq p \kappa_1 \left\{\int_\Omega a^{p + 1} + \int_\Omega u^{p + 1} + \int_\Omega w^{p + 1} + 1\right\}
 \end{align}
for all $t \in (0, \widehat{T})$. Assuming $p > 2N$ entails that
\begin{align*}
    2 \leq \frac{2(p + 1)}{p} \leq \frac{2 p - N + 2}{p - N},
\end{align*}
this allows us to once again use the Gagliardo-Nirenberg inequality with some constant $C_{GN}^{(3)} > 0$ to have
\begin{align*}
\|\varphi\|_{L^{\frac{2(p+1)}{p}}(\Omega)}^{\frac{2(p+1)}{p}}
\leq
C_{GN}^{(3)} \|\nabla \varphi\|_{L^2(\Omega)}^{\frac{2N(p+2)}{p(N+2)}}
\|\varphi\|_{L^1(\Omega)}^{\frac{2(2p-N+2)}{p(N+2)}}+ C_{GN}^{(3)} \|\varphi\|_{L^1(\Omega)}^{\frac{2(p+1)}{p}},
\qquad \text{for all } \varphi\in W^{1,2}(\Omega).
\end{align*}
The above-mentioned Gagliardo-Nirenberg inequality, in conjunction with Young's inequality, allows us to have,e for some constants $\kappa_2, \kappa_3, \kappa_4 > 0$
\begin{align}
    \label{aagan1}
    \nonumber & \kappa_1p \|a\|^{p + 1}_{L^{p + 1}(\Omega)}  \\[5pt]
    \nonumber & = \kappa_1 p \|a^{\frac{p}{2}}\|^{\frac{2(p + 1)}{p}}_{L^{\frac{2(p + 1)}{p}}(\Omega)}  \\[5pt]
    \nonumber & \leq \kappa_2 p \|\nabla a^{\frac{p}{2}}\|_{L^2(\Omega)}^{\frac{2N(p + 2)}{p(N + 2)}}\|a^{\frac{p}{2}}\|_{L^1(\Omega)}^{\frac{2(2p-N+2)}{p(N+2)}} + \kappa_3 p  \|\nabla a^{\frac{p}{2}}\|_{L^1(\Omega)}^{\frac{2(p+1)}{p}}\\[5pt]
    \nonumber & \leq \min\{1, e^{-\chi_F}\} \|\nabla a^{\frac{p}{2}}\|^2_{L^2(\Omega)} \\[5pt]
    \nonumber & + \frac{2(p - N)}{p(N + 2)}\left(\min\{1, e^{-\chi_F}\}\frac{p(N + 2}{N(p + 2)}\right)^{-\frac{(pN + 2N)}{2(p - N)}}\kappa_2^{\frac{p(N + 2)}{2(p - N)}} p^{\frac{p(N + 2)}{2(p - N)}}\|a^{\frac{p}{2}}\|^{\frac{2p - N + 2}{p - N}}_{L^1(\Omega)}+ \kappa_3 p\|a^{\frac{p}{2}}\|_{L^1(\Omega)}^{\frac{2(p+1)}{p}} \\[5pt]
    & \leq  \min\{1, e^{-\chi_F}\} \|\nabla a^{\frac{p}{2}}\|^2_{L^2(\Omega)} + \kappa_4 p^{N + 2} \max\left\{1, \|a^{\frac{p}{2}}\|_{L^1(\Omega)}\right\}^{\frac{2p - N + 2}{p - N}}
\end{align}
for all $t \in (0, \widehat{T})$, where we have used the fact that $\frac{p(N + 2)}{2(p - N)} \leq N + 2$ by $p \geq 2N$. A similar procedure allows us to have
\begin{align}
\label{aa09}
     & \kappa_1p \|u\|^{p + 1}_{L^{p + 1}(\Omega)} \leq  \min\{1, e^{-\chi_F}\} \|\nabla u^{\frac{p}{2}}\|^2_{L^2(\Omega)} + \kappa_5 p^{N + 2} \max\left\{1, \|u^{\frac{p}{2}}\|_{L^1(\Omega)}\right\}^{\frac{2p - N + 2}{p - N}}, \qquad \text{and} \\[5pt]
\label{aant}
    & \kappa_1p \|w\|^{p + 1}_{L^{p + 1}(\Omega)} \leq  \min\{1, e^{-\chi_F}\} \|\nabla w^{\frac{p}{2}}\|^2_{L^2(\Omega)} + \kappa_6 p^{N + 2} \max\left\{1, \|w^{\frac{p}{2}}\|_{L^1(\Omega)}\right\}^{\frac{2p - N + 2}{p - N}}
\end{align}
for all $t \in (0, \widehat{T})$. Inserting \eqref{aagan1}-\eqref{aant} into \eqref{inff1} results in
\begin{align}
    \label{aambg}
     \nonumber & \frac{d}{dt}\bigg\{ \int_\Omega e^{\chi_E v} a^p
   +\int_\Omega e^{\chi_M v} u^p
    + \int_\Omega e^{-\chi_F v} w^p \bigg\} + \bigg\{\int_\Omega e^{\chi_E v} a^p +
   \int_\Omega e^{\chi_M v} u^p
    + \int_\Omega e^{-\chi_F v} w^p \bigg\} \\[5pt]
    & \leq \kappa_7 p^{N + 2} \max \left\{1, \|a^{\frac{p}{2}}\|_{L^1(\Omega)} + \|u^{\frac{p}{2}}\|_{L^1(\Omega)} + \|w^{\frac{p}{2}}\|_{L^1(\Omega)}\right\}^{\frac{2p - N + 2}{p - N}}
\end{align}
for all $t \in (0, \widehat{T})$. Define
\begin{align*}
    p_i = 2^i p_0, \qquad
M_i = \max \left\{
1,\,
\sup_{t\in(0,\widehat{T})}
\left(
\|a^{\frac{p}{2}}\|_{L^1(\Omega)}
+
\|u^{\frac{p}{2}}\|_{L^1(\Omega)}
+
\|w^{\frac{p}{2}}\|_{L^1(\Omega)}
\right)
\right\},
\quad \text{for } i=0,1,2,\ldots.
\end{align*}
Since $p_0 > 2N$ we have a $\kappa_8 > 0$ such that
\begin{align}
    \label{aambg1}
     \nonumber \frac{d}{dt}\bigg\{ \int_\Omega e^{\chi_E v} a^p
   +\int_\Omega e^{\chi_M v} u^p
    & + \int_\Omega e^{-\chi_F v} w^p \bigg\} \\[5pt]
    & + \bigg\{\int_\Omega e^{\chi_E v} a^p +
   \int_\Omega e^{\chi_M v} u^p
    + \int_\Omega e^{-\chi_F v} w^p \bigg\} \leq \kappa_8 p_i^{N + 2}M_{i - 1}^{\frac{2p_i - N + 2}{p_i - N}} 
\end{align}
for all $t \in (0, \widehat{T})$. Consequently, there exists $\mu>1$, independent of $i$, such that
\begin{align}
\label{moser_iteration}
M_i
\leq
\max\left\{
\mu^i M_{i-1}^{2+\epsilon_i},
\left(
1+\|a_0\|_{L^\infty(\Omega)}
+\|u_0\|_{L^\infty(\Omega)} + \|w_0\|_{L^\infty(\Omega)}
\right)^{p_i}
\right\},
\end{align}
where $\epsilon_i := \frac{N+2}{p_i-N} \leq \frac{\kappa_9}{2^i}, \; i\geq1$ and $\kappa_9 = \frac{(N + 2)2^i}{2^i p_0 - N} \leq \frac{2(N + 2)}{2p_0 - N}$. In particular, $\sum_{i=1}^{\infty}\epsilon_i<\infty.$

If
\begin{align*}
\mu^iM_{i-1}^{2+\epsilon_i}
\leq
\left(
1+\|a^{\frac{p}{2}}\|_{L^1(\Omega)}
+\|u^{\frac{p}{2}}\|_{L^1(\Omega)}
+ \|w^{\frac{p}{2}}\|_{L^1(\Omega)}\right)^{p_i}
\end{align*}
for infinitely many $i \geq 1$, then \eqref{moser_iteration} directly yields the
desired boundedness. Otherwise, for all sufficiently large $i$,
\begin{align*}
M_i\leq\mu^iM_{i-1}^{2+\epsilon_i}.
\end{align*}
Iterating this inequality gives
\begin{align}
\label{moser_iteration2}
\nonumber
M_i \leq \mu^i\left(\mu^{i-1}M_{i-2}^{2+\epsilon_{i-1}}\right)^{2+\epsilon_i}
& =\mu^{\,i+(i-1)(2+\epsilon_i)}M_{i-2}^{(2+\epsilon_{i-1})(2+\epsilon_i)}\\[7pt]
&\leq\mu^{\,i+\sum_{j=2}^{i}(j-1)\prod_{k=j}^{i}(2+\epsilon_k)}M_0^{\prod_{k=1}^{i}(2+\epsilon_k)}.
\end{align}
Since $p_i=2^ip_0$ and $\sum_{i=1}^{\infty}\epsilon_i<\infty$, taking
$p_i$-th roots in \eqref{moser_iteration2} and passing to the limit
$i\to\infty$ yields some $C(T)>0$ such that
\begin{align*}
\sup_{t\in(0,\widehat{T})}\left\{\|a(\cdot,t)\|_{L^\infty(\Omega)}+\|u(\cdot,t)\|_{L^\infty(\Omega)}
+ \|w(\cdot,t)\|_{L^\infty(\Omega)}\right\}
\leq C(T).
\end{align*} 
\end{proof}

We are now in a position to establish the main result of this section, asserting 3D global classical solvability for \eqref{model3w}, \eqref{model2} and \eqref{model3}.

\begin{proof}[Proof of Theorem \ref{3D_Class_Theorem}]
In view of the extensibility criterion \eqref{extensibility_criterion2} in Lemma \ref{extensibility_imp}, it suffices to invoke the conclusion of Lemma \ref{lemma_linf} to obtain the desired result.
\end{proof}

\section{Numerical simulations}\label{sec5}
In this section, we present some indicative numerical simulations for model \eqref{model1}-\eqref{model3} in two spatial dimensions, as well as the modified model \eqref{model3w}, \eqref{model2} and \eqref{model3} in three spatial dimensions. The simulations incorporate the necessary conditions in Theorems \ref{2D_Class_Theorem} and \ref{3D_Class_Theorem} on the model parameters for the existence of solutions. 
The numerical solutions for the PDE systems are obtained using an IMEX Finite Volumes/Finite Differences method, introduced in \cite{SfakKolbe2016, hellmann2016mathematical, kolbe2021modeling, Kolbe2022glioma}, where numerical simulations were performed using MATLAB R2025a. More details on the numerical scheme are given in Appendix \ref{app:num_scheme}.

\subsection{Simulations for \texorpdfstring{\eqref{model1}-\eqref{model3}}{the system} in two spatial dimensions}
\label{sec:numsol2D}
Let us start with the model \eqref{model1}-\eqref{model3} in a two-dimensional square domain. We set the domain $\Omega = [0,4]\times[0,4]$. The EMT-MET switch functions are the ones given in \eqref{zeta_Sim} with  $L = 1$, $k = 5\times10^4$ and $b_T = 5\times10^{-3}$. For the initial conditions of the model, the density of ECCs is given by:
\begin{equation*}
    c^E_0(x,y) = 0.8\exp \left(  -\frac{(x-2)^2 + (y-2)^2}{0.04} \right), \quad \text{for } (x,y)\in\Omega,
\end{equation*}
while we assume there are no MCCs initially present i.e. $c^M_0 (x,y) = \mathbf{0}$. As far for the initial densities of the CAFs, MMPs, TGF-$\beta$ and ECM, we assume an initial smooth random configuration as shown in Figure \ref{fig:2Dsims}. The construction of these initial conditions is described in more detail in Appendix \ref{app:initial condition} and the values of the model parameters chosen for the simulation are shown in Table \ref{tab:params}, and satisfy the conditions of Theorem \ref{2D_Class_Theorem}. 

The simulation exhibits key model characteristics and captures the dynamical behavior of tumor growth and invasion of local tissue. Initially, EMT occurs in areas of high TGF- $\beta$ concentration, leading to the formation of a small population of MCCs that expands through the tissue via haptotaxis and diffusion. ECM degradation by cancer cells and MMPs creates ``local'' ECM gradients that promote cell migration and drive rapid MCC expansion to the tissue edges. In these damaged or low-ECM-density areas, we observe progressive CAF accumulation, which performs negative haptotaxis to repair the collagen matrix. The presence of CAFs in these low-ECM-density areas results in the overexpression of TGF-$\beta$, which, in effect, leads to the creation of a smaller MCC colony away from the primary region of expansion, as shown in the last column of Figure \ref{fig:2Dsims}. Let us note here that Theorem \ref{2D_Class_Theorem} shows the existence of solutions of the model when the haptotaxis coefficients are at most of the same order of magnitude as the diffusion of the cells, i.e., $ \approx \mathcal {O} (1)$. Hence, the spatial spreading of the cancer cells comes from a balanced combination of these two effects, with neither dominating the other.


\begin{figure}
    \centering
    \begin{tabular}{ccccc}
         & \hspace{-0.7cm}$t=0$ & \hspace{-0.7cm}$t=0.023$ & \hspace{-0.7cm}$t=0.114$ & \hspace{-0.7cm}$t=0.231$  \\
         \rotatebox{90}{\hspace{2.4em}{ ECM \hspace{1.7cm} MMPs \hspace{1.6cm} TGF-$\beta$ \hspace{1.7cm} CAFs \hspace{1.8cm} MCCs \hspace{1.7cm} ECCs}} 
         & 
         \includegraphics[width=0.22\linewidth, trim={37em 6em 35em 4em},clip]{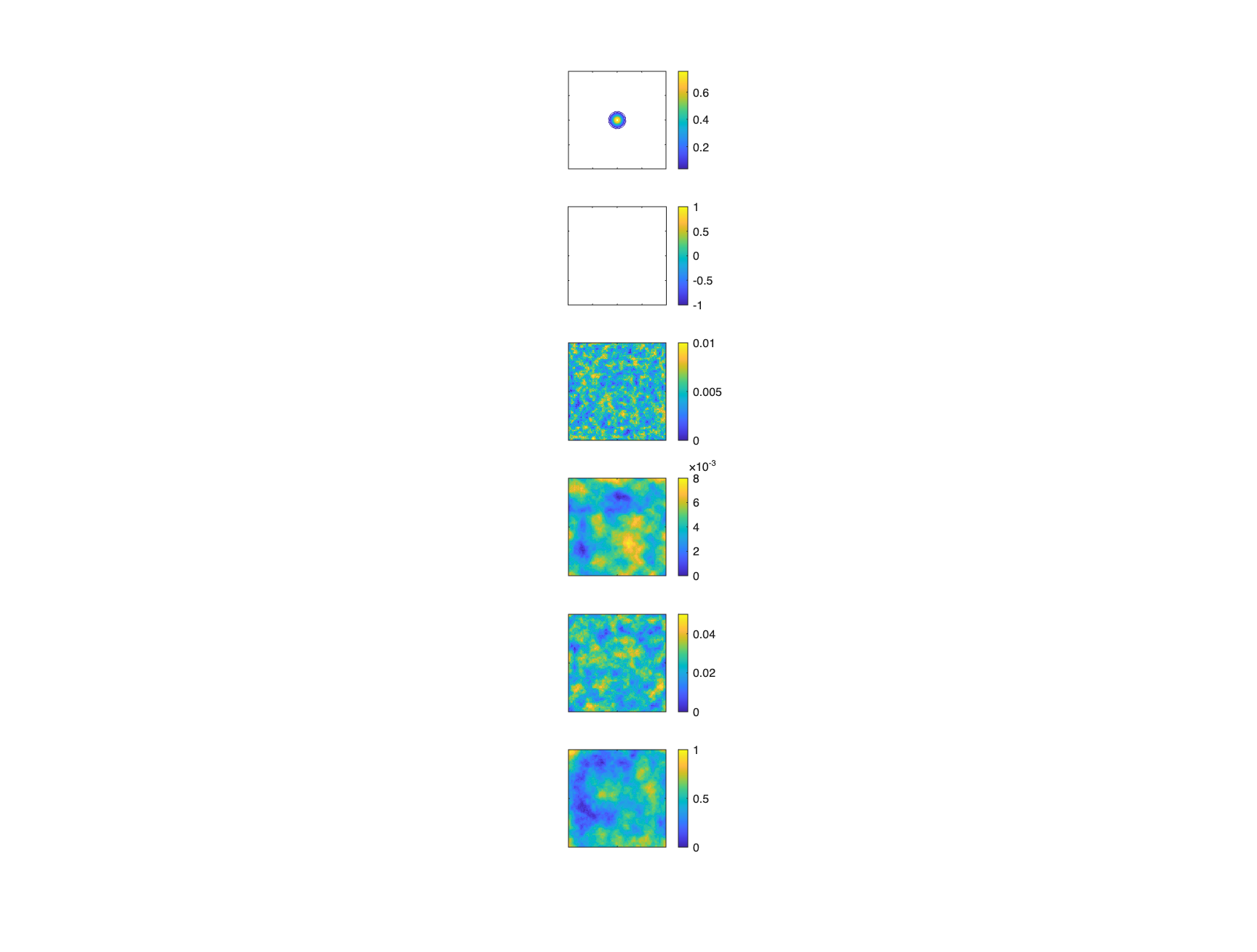} &
         \includegraphics[width=0.22\linewidth, trim={37em 6em 35em 4em},clip]{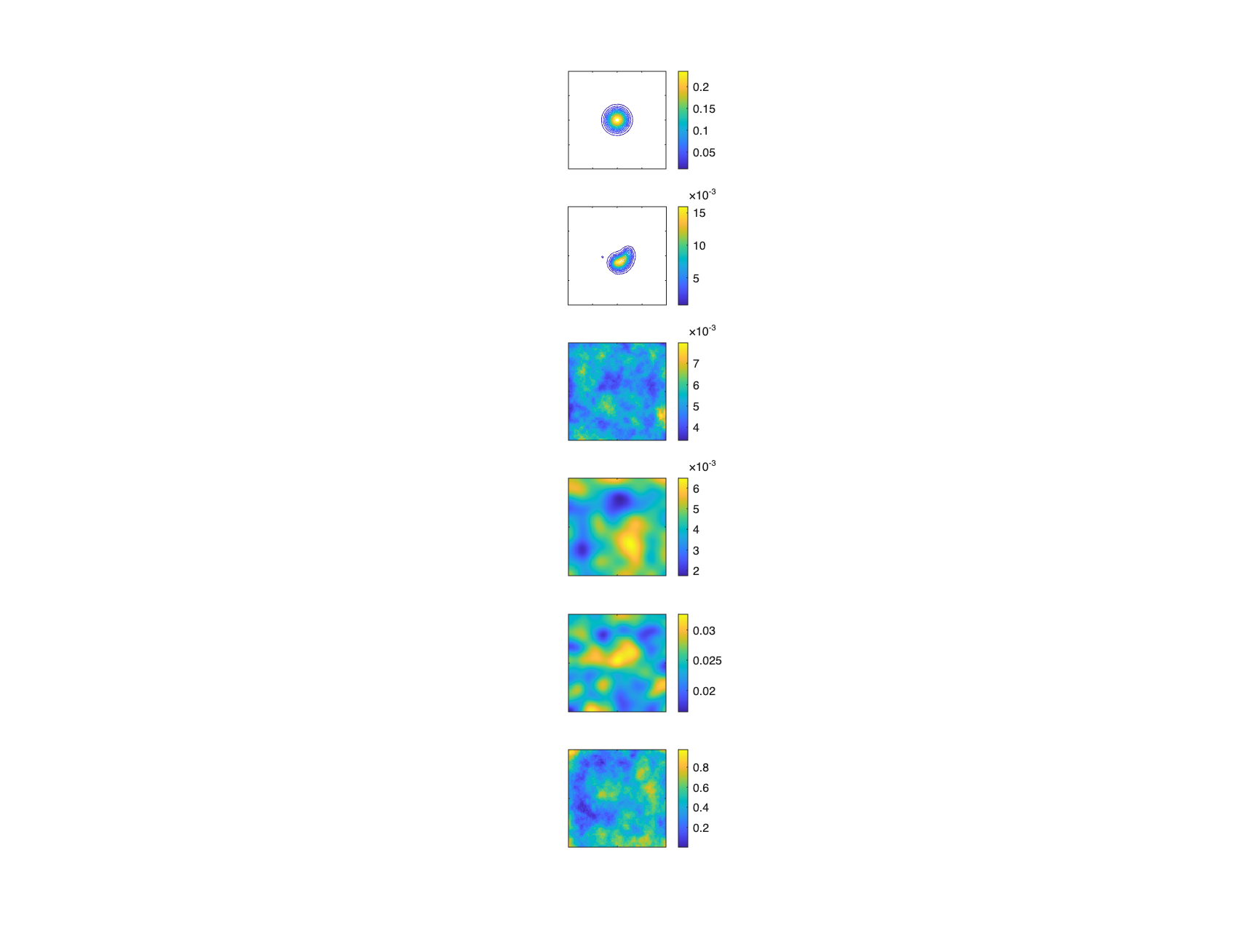} &
         \includegraphics[width=0.22\linewidth, trim={37em 6em 35em 4em},clip]{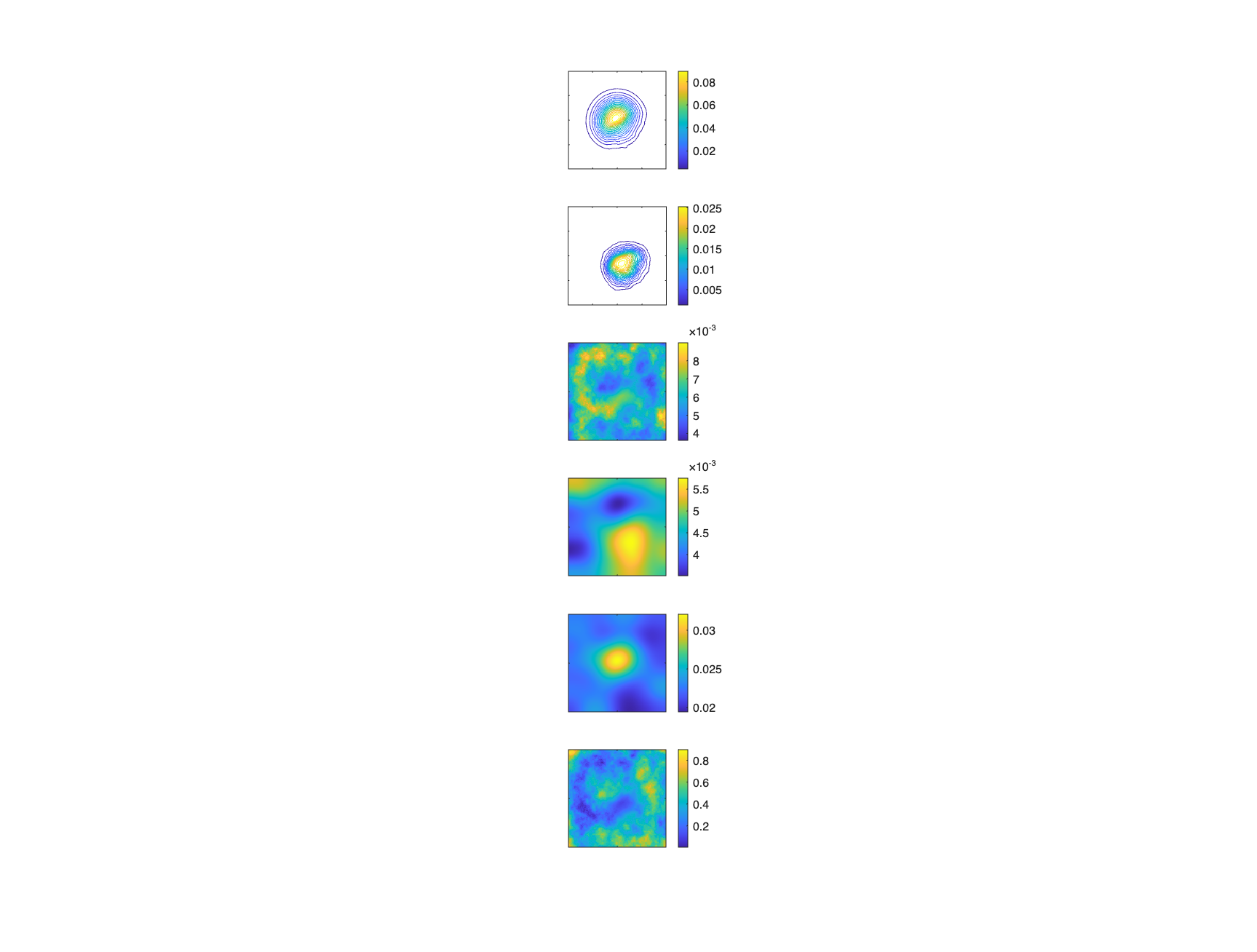} &
         \includegraphics[width=0.22\linewidth, trim={37em 6em 35em 4em},clip]{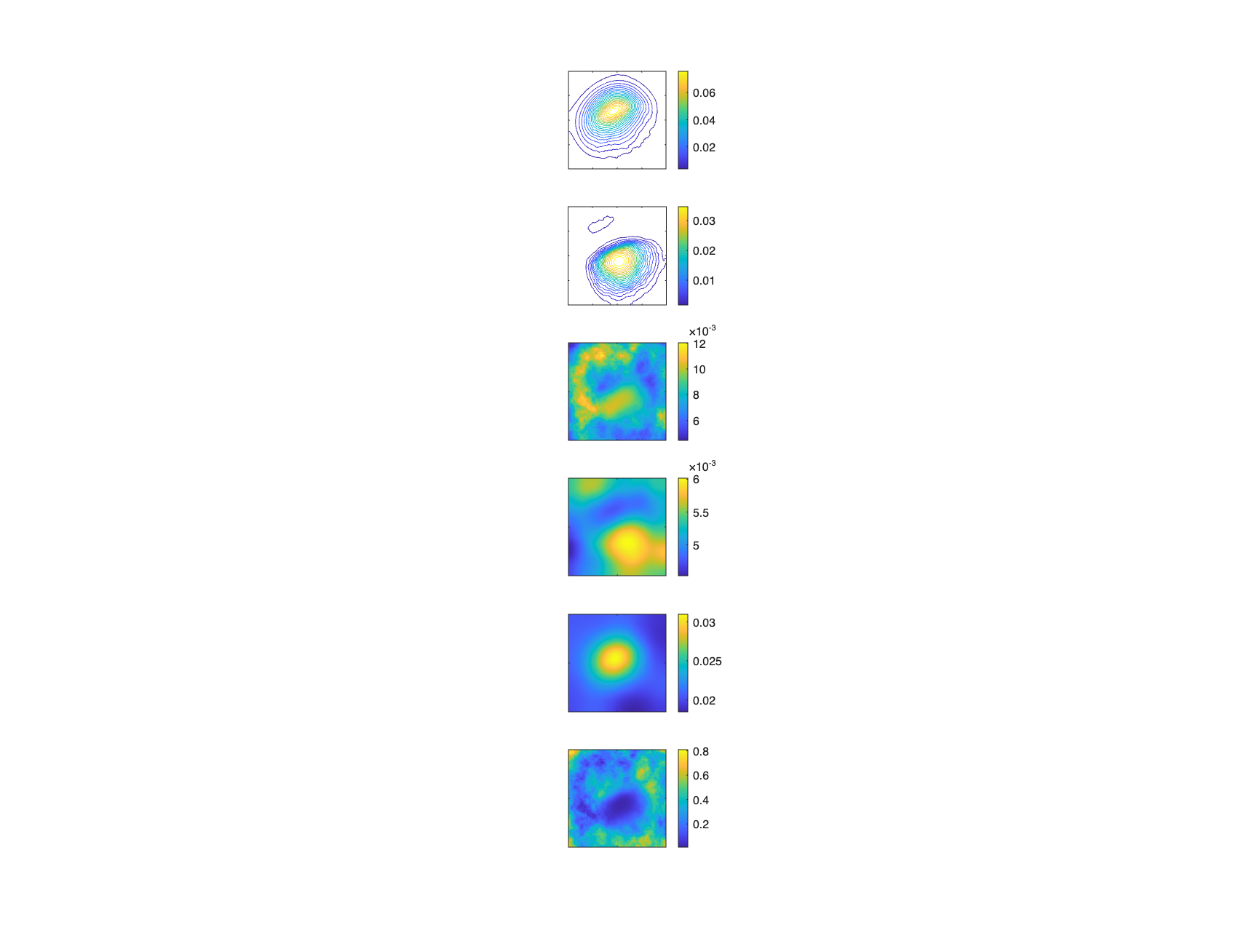} 
    \end{tabular}
    \caption{Numerical solution for \eqref{model1}-\eqref{model3} in a two dimensional spatial domain. We show the spatio-temporal evolution of the system for four different time snapshots (columns), for all the densities (rows).}
    \label{fig:2Dsims}
\end{figure}

\subsection{Simulations for \texorpdfstring{\eqref{model3w}, \eqref{model2} and \eqref{model3}}{the system} in three spatial dimensions}
\label{sec:numsol3D}
We continue with the model \eqref{model3w}, \eqref{model2} and \eqref{model3} in a three-dimensional domain,  
 where we now set $\Omega = [0,4]\times[0,4]\times[0,4]$, and the simulation parameters are shown in Table \ref{tab:params}. For the initial conditions, we yet again assume that there no MCCs in the domain, i.e. $c^M(x,y,z) = \mathbf{0}$, and the initial distribution of ECCs is given by:
\begin{equation*}
    c^E_0(x,y,z) = 0.8\exp \left(  -\frac{(x-2)^2 + (y-2)^2 + (z-2)^2}{0.04} \right), \quad \text{for } (x,y,z)\in\Omega.
\end{equation*}
For the initial density distributions of the CAFs, MMPs, and ECM, we use a similar strategy as before: we construct smooth random configurations as described in Appendix \ref{app:initial condition} and shown in Figure \ref{fig:Initial condition}.

\begin{figure}
    \centering
    \begin{tabular}{cc}
         & \hspace{-1.5cm} ECCs \hspace{6.2cm} MCCs   \\
         \rotatebox{90}{\hspace{8em}{$t=0.01$}} & \includegraphics[width=0.85\linewidth, trim={20em 8em 10em 3em},clip]{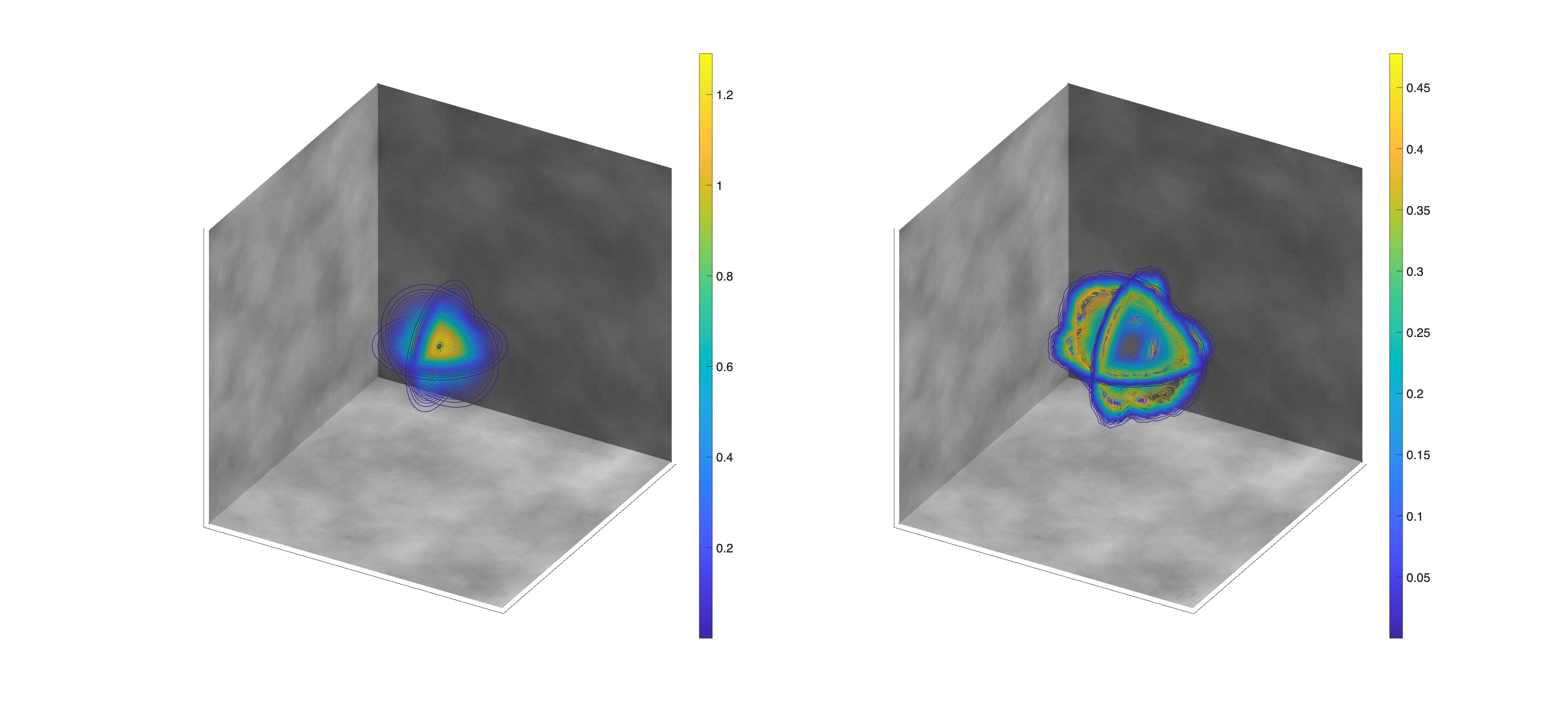} \\
         \rotatebox{90}{\hspace{8em}{$t=0.1$}} & \includegraphics[width=0.85\linewidth, trim={20em 8em 10em 3em},clip]{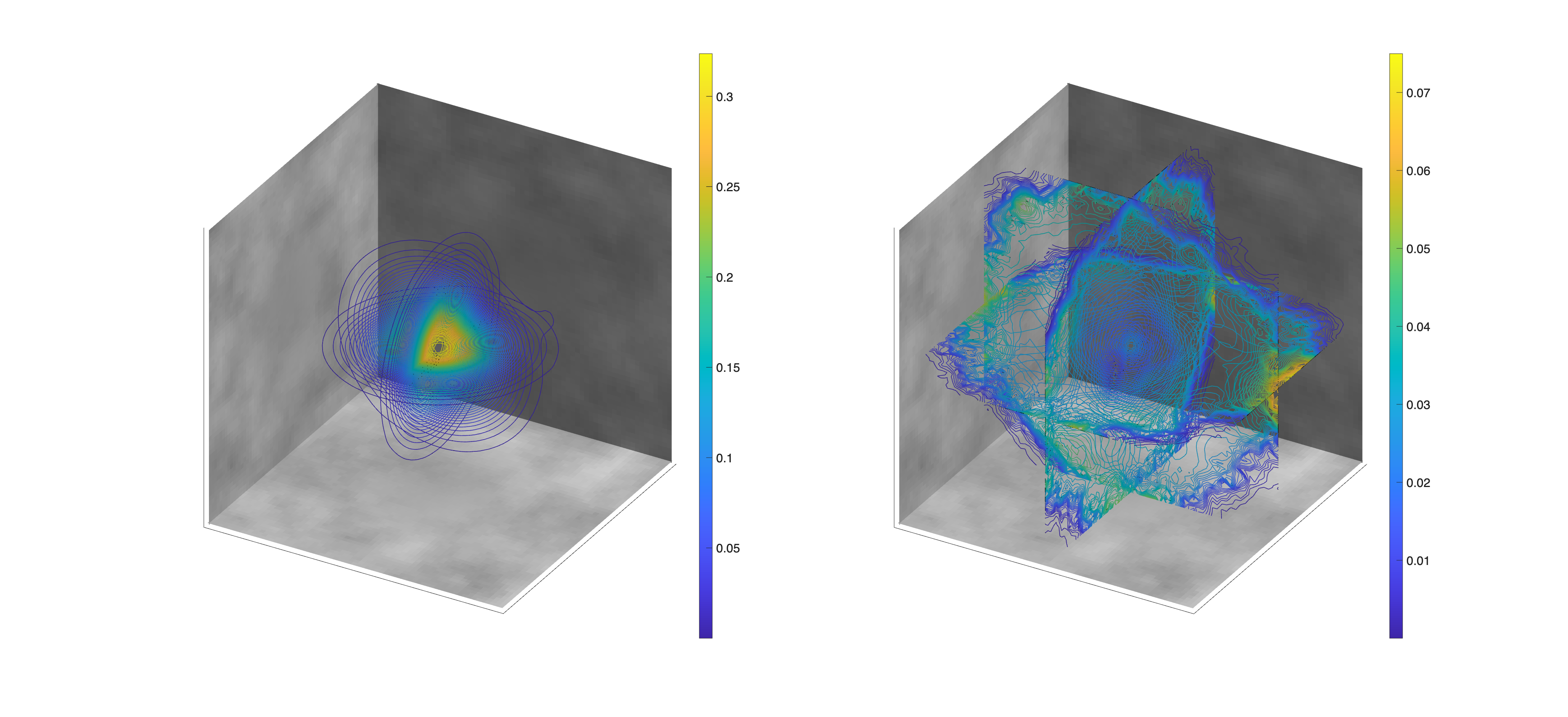} \\
         \rotatebox{90}{\hspace{8em}{$t=0.3$}} & \includegraphics[width=0.85\linewidth, trim={20em 8em 10em 3em},clip]{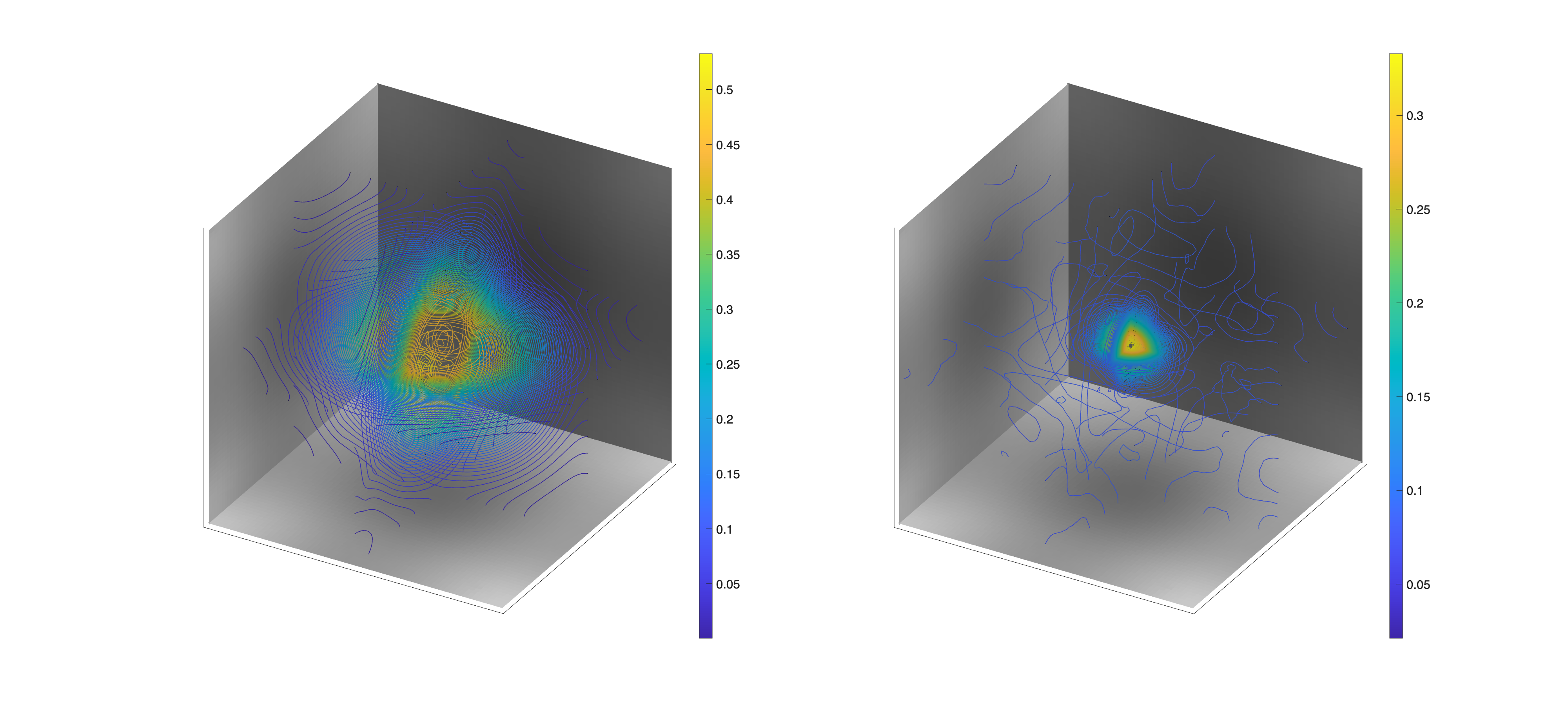} \\
    \end{tabular}
    \caption{Numerical solution for \eqref{model3w}, \eqref{model2} and \eqref{model3} in a three dimensional spatial domain. We show the spatio-temporal evolution of the system for three different time snapshots (rows), for the ECC and MCC densities (rows).}
    \label{fig:3Dsims}
\end{figure}

The simulation results are shown in Figure \ref{fig:3Dsims}. For illustration, we show only the spatiotemporal progression of the two cancer cell densities. In particular, we plot the contour lines of the two-dimensional planes, $x=2, y=2, z=2$, and track the evolution of the cancer cells in all directions of the three-dimensional domain. The grayscale background represents three different slices of the ECM density at the edges of the domain, i.e. $x=0, y=0, z=0$. The simplified model \eqref{model3w}, assumes that phenotypic transitions are happening at a constant rate in the entirety of the domain, neglecting their dependence on TGF-$\beta$. For that reason, EMT initially occurs in the center of the domain occupied by the ECCs, and the newly formed MCCs begin their haptotactic motion through the ECM. We note here that Theorem \ref{3D_Class_Theorem} doesn't pose any restrictions on the haptotaxis coefficient of the MCCs, so we have chosen the value $\chi_M=10$, which is one order of magnitude larger than the diffusion of the cells. Hence, the MCCs begin their ECM-dependent directed motion and reach the edges of the domain. The reverse phenotypic transition of MET now takes place in regions further away from the primary tumor site, which leads to the creation of small metastatic colonies of ECCs in the outer edges of the domain, as shown in Figure \ref{fig:3Dsims} at the final time $t=0.3$. Additionally, because more ECCs accumulate at the domain edges, we observe ECM degradation at the final simulation time.


\section{Discussion \& conclusions}\label{sec6}
 In this work, we study the existence of global classical solutions for a cancer invasion and metastasis model that includes phenotypic transitions of epithelial-like and mesenchymal-like cancer cells, in both two and three spatial dimensions. The model accounts for cancer cell migration due to the ECM, where cells perform haptotaxis and move toward denser regions of the matrix. One key component of the model is the CAFs, which, firstly, perform negative haptotaxis and move to low ECM density regions to repair the matrix and secondly, produce TGF-$\beta$, which is a triggering factor for the phenotypic transitions between the two cancer cell families; i.e., high TGF-$\beta$ concentration promotes EMT, whereas low concentration promotes the reverse process, MET. 

 One main challenge in the analysis is handling the nonproliferative MCC population, which undergoes haptotactic migration without logistic damping. Bidirectional EMT-MET transitions and different proliferation mechanisms across cell populations introduce further coupling, making the derivation of suitable a priori estimates more involved. 
 A further difficulty is the presence of MMPs, which introduces a nonlinear coupling among the ECCs, MCCs, CAFs, and the ECM. In particular, MMP production by ECCs and CAFs promotes ECM degradation, which in turn affects the gradients governing haptotactic migration. This feedback mechanism introduces additional nonlinear interactions, making it harder to derive and close the required a priori estimates. By deriving suitable uniform bounds and controlling these nonlinear interactions, we extend the existing global solvability results in \cite{TaoWinklerPreprint1, TaoWinkler2, Li2026Haptotaxis} to a more fully coupled cancer invasion model. The numerical results, shown in Section \ref{sec5}, illustrate the results of Theorems \ref{2D_Class_Theorem} \& \ref{3D_Class_Theorem} in two and three spatial dimensions, respectively, and showcase the spatiotemporal progression of a metastatic tumor colony and the creation of cancer cell islands away from the primary tumor. 

 However, the current version of the model includes several limitations that we would like to address and explore in future work. First, the model assumes linear diffusion for all cellular components, in contrast to the reference model in \cite{katsaounis2024genuinely}, which incorporates a nonlinear porous-medium diffusion term. In a cellular context, porous medium diffusion should be presumed as the spatial spreading of cells due to the effect of the pressure imposed by their neighbors. Additionally, in the original model in \cite{katsaounis2024genuinely}, the presence of high TGF-$\beta$ concentrations was only a necessary but not a sufficient condition for EMT and MET, and it additionally relied on a random stochastic process for the phenotypic transitions to take place. Finally, the model takes into account of only two families of cancer cells, ECCs and MCCs, assuming a binary instantaneous switch between them. Still, several studies suggest that EMT should be considered as a continuous process, where intermediate partial-epithelial-mesenchymal cells are present and contribute to the overall metastatic progression, \cite{Tsuji2009, Pastushenko2019}. Recent modeling attempts have included the continuous phenotypic spectrum in the form of phenotype-structure PDEs, but only a few analytical results exist for these problems \cite{Chisholm2016, Celora2023, Lorenzi2024, Lorenzi2025, Carrillo2025, Lorenzi2026, Katsaounis2026}.

 Regardless, this work takes an important step toward rigorous mathematical analysis of biologically realistic cancer invasion models. By establishing global classical solvability while retaining complex interactions among cell populations, phenotypic transitions, and ECM remodeling, our results provide a mathematical foundation for studying more elaborate models of cancer invasion and metastasis.

\subsubsection*{Acknowledgments}
NM would like to express his sincere gratitude to Prof.\ Dr.\ Michael Winkler, Institut f\"ur Mathematik, Universit\"at Paderborn, for kindly sharing the preprints \cite{TaoWinklerPreprint1, TaoWinkler2}. The generosity is deeply appreciated.

\noindent 
NM would also like to thank Prof.~Dr.~Christina Surulescu, RPTU Kaiserslautern-Landau, for valuable discussions during the development of this manuscript.

\noindent
NM and DK acknowledge funding from the Deutsche Forschungsgemeinschaft (DFG) via SPP 2311 \textit{Robust coupling of continuum-biomechanical in silico models to establish active biological system models for later use in clinical applications – Co-design of modeling, numerics, and usability} (Project Numbers: 465242756 \& 548864771).

\appendix

\section{Numerical solution of PDE systems}\label{app:numPDEs}

The advection-reaction-diffusion systems are solved numerically using a second-order {Implicit-Explicit Runge-Kutta Finite Differences, Finite Volumes} (IMEX-RK FD-FV) numerical method. This method extends a previous method developed and used in \cite{SfakKolbe2016, hellmann2016mathematical, kolbe2021modeling, Kolbe2022glioma}, and we refer there for most details. Here, we only discuss some of its components. 

\subsection{Numerical scheme}\label{app:num_scheme}
We consider, at first, a generic advection-reaction-diffusion system of the form 
\begin{equation}\label{ARD}
    \mathbf{w}_{t} = A(\mathbf{w}) + R(\mathbf{w}) + D(\mathbf{w}),
\end{equation}
where $\mathbf{w}$ represents the analytical solution vector of the system, and $A,R,$ and $D$ are the advection, reaction and diffusion operators respectively. After spatial discretizations have taken place, we denote the corresponding semi-discrete approximation by $\mathbf{w}_{h}$, where the index $h$ denotes the (maximal, if the space discretization is non-uniform) spatial grid diameter. The semi-discrete solution $\mathbf w_h$ satisfies the following system of ODEs
\begin{equation}\label{numARD}
    \partial_{t}\mathbf{w}_{h} = \mathcal{A}(\mathbf{w}_h) + \mathcal{R}(\mathbf{w}_h) + \mathcal{D}(\mathbf{w}_h),
\end{equation}
where the numerical operators $\mathcal{A}, \mathcal{R}$ and $\mathcal{D}$ are (spatially) discrete approximations of the advection-reaction, diffusion operators, $A, R$ and $D$ in equation \eqref{ARD}. Moreover, as the numerical scheme we employ is (partially) FV, raising its accuracy to the second order necessitates the use of {flux limiters} for the interface reconstruction of the numerical fluxes. Out of a large number of flux limiter options, we have found that the {Minimized-Central} (MC) limiter, see \cite{van1977towards}, constitutes a robust and efficient choice. 

Before solving \eqref{numARD}, we re-organize its terms in implicit and explicit (IMEX splitting) and, accordingly, \eqref{numARD} takes the form
\begin{equation}\label{IMEX}
    \partial_{t}\mathbf{w}_{h} = \mathcal{I}(\mathbf{w}_{h}) + \mathcal{E}(\mathbf{w}_{h}).
\end{equation} 
The actual IMEX splitting depends on the problem at hand, but in a typical case the advection terms $\mathcal{A}$ are treated explicitly in time, the diffusion terms $\mathcal{D}$ implicitly, and the reaction $\mathcal{R}$ terms either explicitly or implicitly, depending on their stiffness. In the problems we encounter in this scenario, we resolve all reaction terms explicitly in time.

The semi-discrete problem \eqref{IMEX} is now solved using a diagonally implicit RK method for the implicit part $\mathcal{I}({w}_{h})$, and an explicit RK for the explicit part $\mathcal{E}(\mathbf{w}_{h})$. Altogether, we solve \eqref{IMEX} using the additive RK scheme
\begin{equation}\label{IMEX-RK}
    \begin{dcases}
	\mathbf{w}^{*}_{i} = \mathbf{w}^{n}_{h} + \tau_{n} \sum^{i-2}_{j=1}\Bar{a}_{i,j}\mathbf{E}_{j} + \tau_{n}\Bar{a}_{i,i-1}\mathbf{E}_{i-1}, &\quad i = 1 \ldots s, \\
	\mathbf{w}_{i} = \mathbf{w}^{*}_{i} + \tau_{n} \sum^{i-1}_{j=1}a_{i,j}\mathbf{I}_{j} + \tau_{n}\Bar{a}_{i,i}\mathbf{I}_{i}, &\quad i = 1 \ldots s, \\
	\mathbf{w}^{n+1}_{h} = \mathbf{w}^{n}_{h} + \tau_{n} \sum^{s}_{i=1}\Bar{b}_{i}\mathbf{E}_{i} + \tau_{n}\sum^{s}_{i=1}b_{i}\mathbf{I}_{i},
	\end{dcases}
\end{equation}
where $s$ 
are the stages of the IMEX-RK method, $\mathbf{E}_{i} = \mathcal{\mathbf E}(\mathbf{w}_{i})$, ${\mathbf I}_{i} = \mathcal{\mathbf I}(\mathbf{w}_{i})$$, $\; $i = 1\ldots s$, and $\{\hat{\mathbf b},\hat{\mathbf A}\}, \; \{\mathbf b,\mathbf A\}$ are the coefficients for the explicit and implicit parts of the scheme, respectively. We refer the reader to \cite{christopher2001additive} for the values of these coefficients, where the Butcher's Tableau of both second- and third-order methods are given. As a final stage of this method, we solve the linear system in \eqref{IMEX-RK} using the {Iterative Biconjugate Gradient Stabilized Krylov subspace} method; see \cite{Krylov.1931, vdVorst.1992}.

\subsection{Initial conditions \& simulation parameters}\label{app:initial condition}
We give here a short description of the construction of the initial condition for the ECM, CAFs, TGF-$\beta$, and MMPs densities. A visual representation is shown in Figure \ref{fig:Initial condition}, but the construction process goes as follows, and more details can be found in \cite{franssen2021anovel3d, katsaounis2024genuinely}. To begin with, an $8 \times 8$ matrix is created with entries taken from a standard normal distribution, $N(0,1)$. We take a number of refinement steps until the matrix resolution reaches the desired (computational) resolution of the domain. At each stage, the matrix size is doubled to increase the resolution of each density. The entries of the new larger density are obtained by interpolating the values of the previous smaller density, with the addition of a small amount of Gaussian noise. Therefore, as each density is refined, it preserves the initial randomly chosen structure observed in the $8 \times 8$ grid, with areas of higher or lower density appearing in the same regions regardless of resolution. This procedure is extended into three dimensions. However, due to the increased computational time that three-dimensional simulations impose, a maximum refinement of $64 \times 64 \times 64$ will be used for all densities in this experiment. The simulation parameters used for the numerical experiments in Section \ref{sec5} are shown in Table \ref{tab:params}.
\begin{figure}[h!]
    \footnotesize
    \centering
    \begin{tabular}{ccc}
        \includegraphics[width=0.15\textwidth]{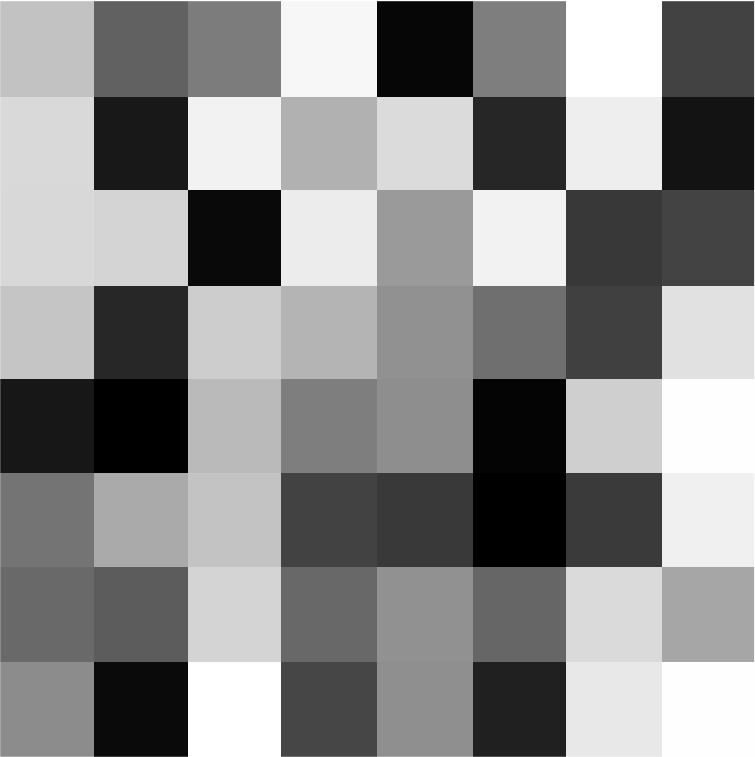} 
            &\includegraphics[width=0.15\textwidth]{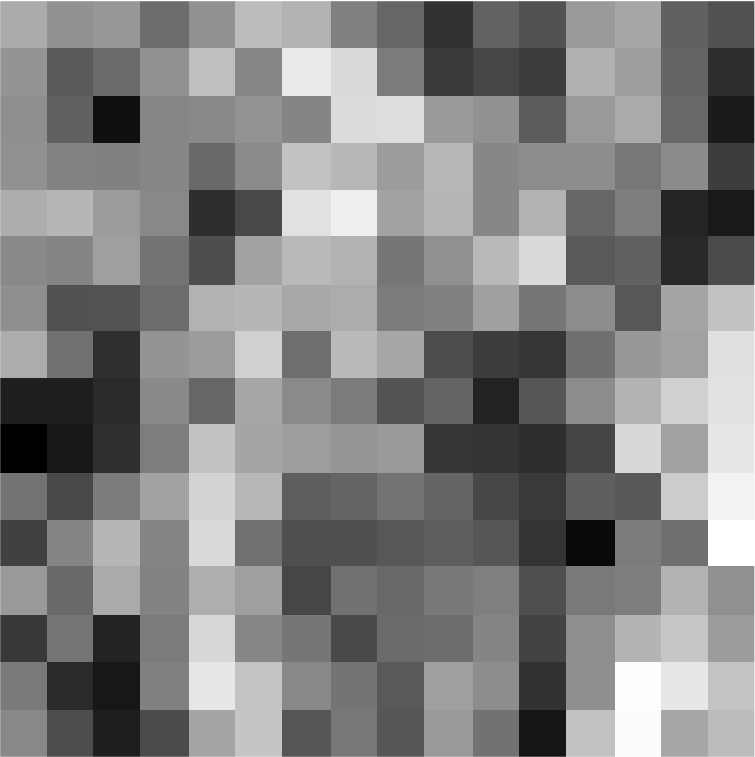}
            &\includegraphics[width=0.15\textwidth]{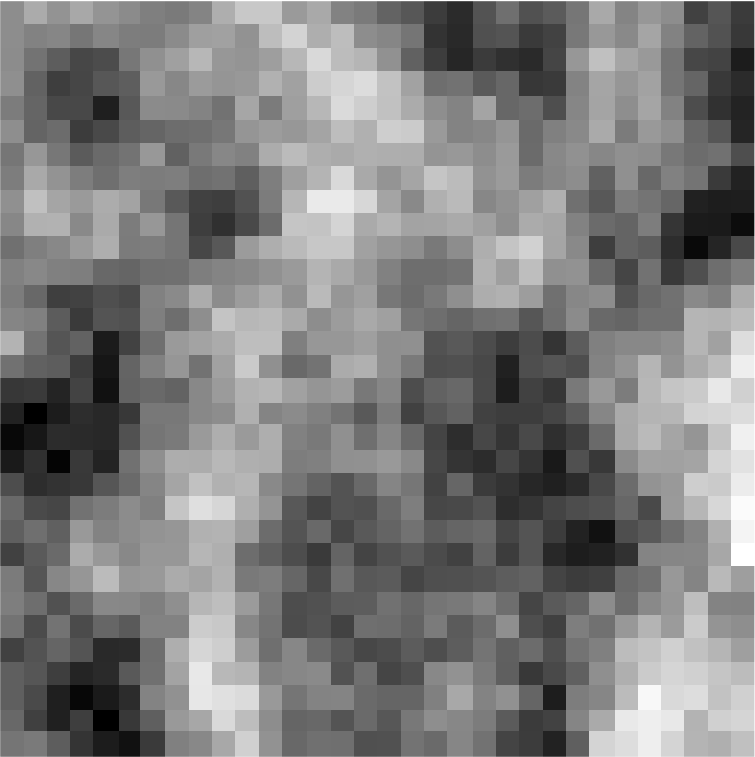}\\   {(a)~$8\times8$}&{(b)~$16\times16$}&{(c)~$32\times32$}\\
        \includegraphics[width=0.15\textwidth]{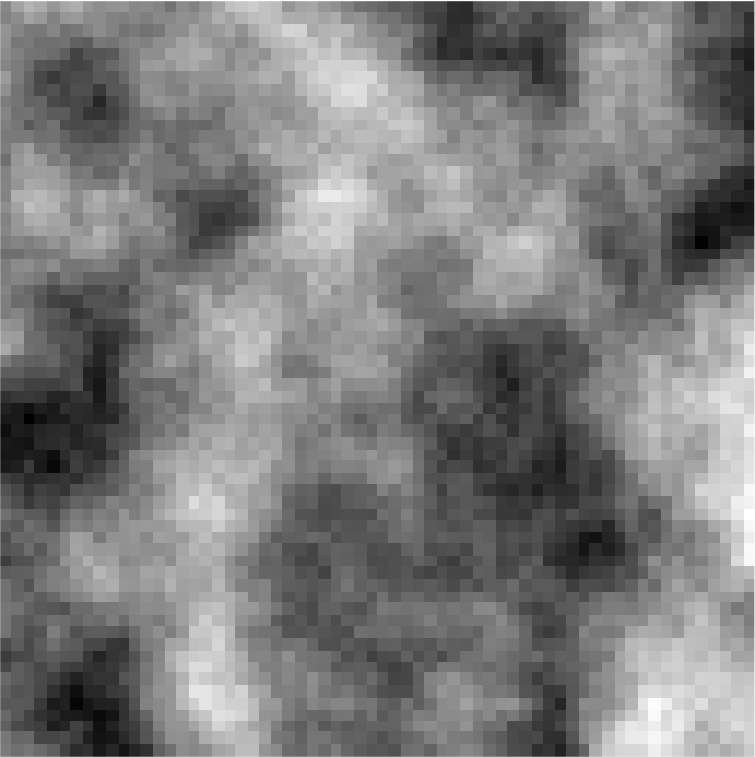}
            &\includegraphics[width=0.15\textwidth]{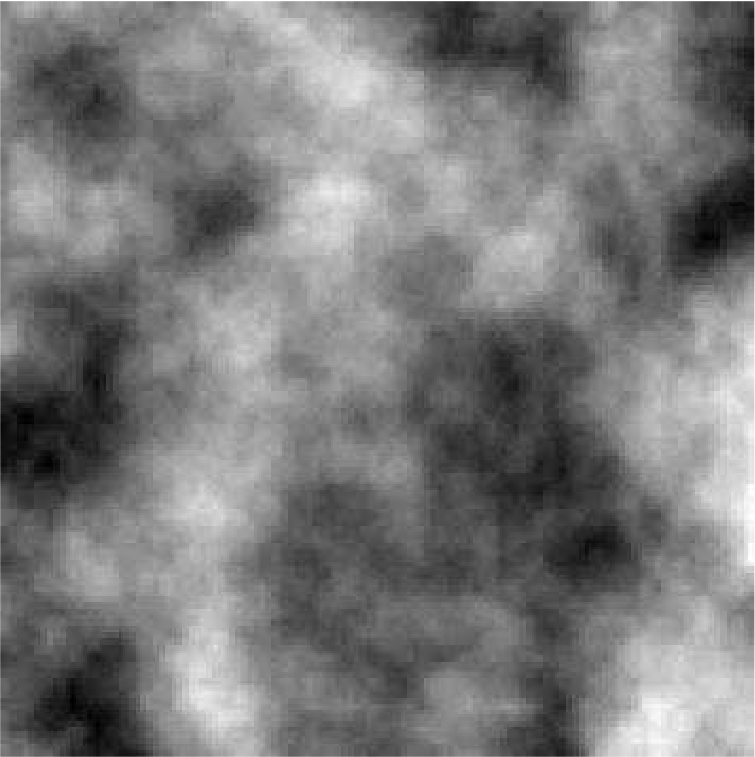}
            &\includegraphics[width=0.15\textwidth]{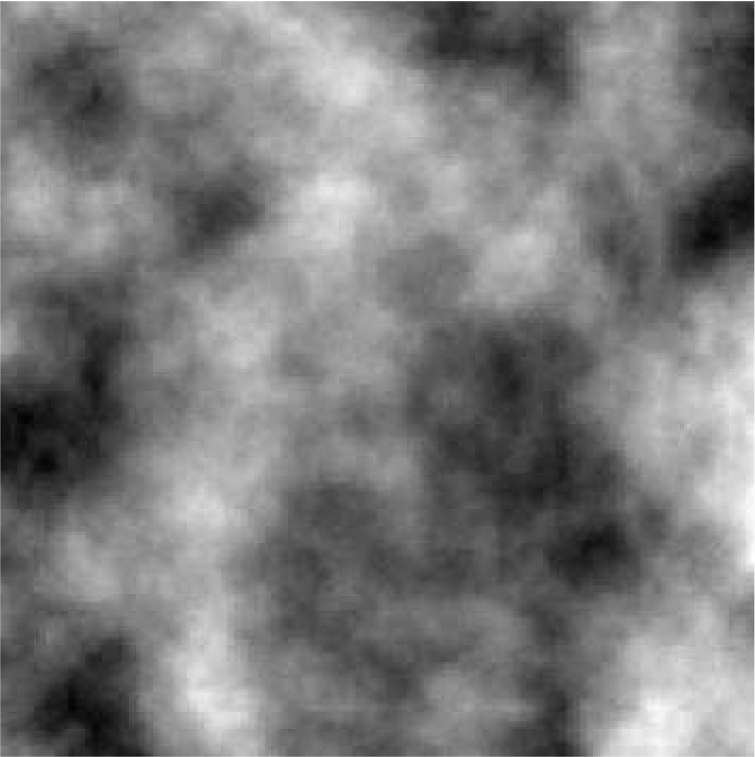}\\            {(d)~$64\times64$}&{(e)~$128\times128$}&{(f)~$256\times256$}
    \end{tabular}
    \caption{ \footnotesize Construction of the initial condition for the ECM, TGF-$\beta$, MMPs and CAFs densities employed in Experiments in Section \ref{sec5}. The process starts with random values over an $8\times8$ grid; these introduce the basic structure of each density. At each stage, the grid is bisected, and the new values are obtained by averaging neighboring values from the previous coarser grid, with added noise. The process stops when the required grid size is reached.}
    \label{fig:Initial condition}
 \end{figure}

\begin{table}[ht]
    \centering
    \begin{tabular}{|c|c|c|}
    \hline
        Parameters & ~~~~~ $2D$  ~~~~~ &  ~~~~~ $3D$ ~~~~~ \\
        \hline
        $\chi_E$ & 0.01 & 0.01\\
        $\chi_M$ & 1.99 & 10\\
        $\chi_F$ & 0.99 & 0.8\\
        $\rho_E$ & 10 & 10\\
        $\rho_F$ & 5 & 5\\
        $\rho_b$ & 1.5 & --\\
        $\rho^F_m$ & 1 & 1\\
        $\rho^E_m$ & 1 & 1\\
        $\rho_v$ & 6 & 6\\
        $\lambda_F$ & 1 & 1\\
        $\lambda_b$ & 1 & --\\
        $\lambda_m$ & 1.2 & 1.2\\
        $\delta_1$ & 40 & 40\\
        $\delta_2$ & 2 & 2\\
        $\delta_3$ & 500 & 0\\
        $\mu_E/\alpha_1$ & 10 & 10\\
        $\mu_M/\alpha_2$ & 20 & 20\\
    \hline
    \end{tabular}
    \caption{Simulations parameters for the numerical experiments shown in Section \ref{sec5}.}
    \label{tab:params}
\end{table}


\newpage
\bibliographystyle{unsrtinitials}
\bibliography{meta}

\end{document}